\newcommand{\R}{\ensuremath{\mathbb{R}}}
\DeclareMathOperator{\arctanh}{\mathrm{arctanh}} 
\DeclareMathOperator{\arcsinh}{\mathrm{arcsinh}} 
\DeclareMathOperator{\sech}{\mathrm{sech}}
\DeclareMathOperator{\width}{\mathrm{width}}
\DeclareMathOperator{\tr}{\mathrm{tr}}
\newcommand{\pd}{\partial}
\DeclareMathOperator{\area}{\mathrm{area}} 
\DeclareMathOperator{\length}{\mathrm{length}} 
\DeclareMathOperator{\Rc}{\mathrm{Rc}} 
\DeclareMathOperator{\Rm}{\mathrm{Rm}} 
\DeclareMathOperator{\Sc}{\mathrm{R}} 
\def\labelitemi{--}
\def\ba #1\ea {\begin{align} #1\end{align}}
\def\bann #1\eann {\begin{align*} #1\end{align*}}
\def\ben #1\een {\begin{enumerate} #1\end{enumerate}}
\def\bi #1\ei {\begin{itemize}\renewcommand\labelitemi{--} #1\end{itemize}}
\theoremstyle{plain}
\newtheorem{thm}{Theorem}[section]
\newtheorem*{thm*}{Theorem}
\newtheorem{lem}[thm]{Lemma}
\newtheorem{prop}[thm]{Proposition}
\newtheorem{conj}[thm]{Conjecture}
\newtheorem{defn}[thm]{Definition}
\theoremstyle{remark}
\newtheorem{rem}{Remark}
\author[T.~Bourni]{Theodora Bourni}
\author[T.~Buttsworth]{Timothy Buttsworth}
\author[R.~Lafuente]{Ramiro Lafuente}
\author[M.~Langford]{Mat Langford}
\date{\today}
\title{Ancient Ricci flows of bounded girth}
\begin{document}

\begin{abstract}
For each $n\ge 3$, we construct a `pancake-like', $O(2)\times O(n-1)$-invariant ancient Ricci flow 
with positive curvature operator and bounded ``girth'', and we determine its asymptotic limits backwards in time. This solution is new even in dimension three.  
The construction hinges on the Ricci flow invariance of certain  conditions  on the curvature and its spatial derivatives under this symmetry regime, whose proof does not follow from Hamilton's tensor maximum principle. 
\end{abstract}


\maketitle

\tableofcontents

\section{Introduction}


A solution to the Ricci flow equation $\tfrac{\partial}{\partial t} g(t) = - 2\, \rm{Ric}_{g(t)}$ is called \emph{ancient} if it is defined for all $t\in (-\infty, T)$. Ancient solutions to geometric flows have aroused a great deal of interest in recent years, as they are natural parabolic counterparts of complete solutions to the corresponding elliptic equation; see \cite{MR4204997} for a recent survey. We are particularly interested in the Liouville-type rigidity phenomena they may exhibit, in the spirit of Appell's theorem \cite{MR49430} stating that positive ancient solutions to the heat equation on $\R^n$ with sub-exponential growth must be constant.

With Appell's theorem in mind, it is natural to consider ancient Ricci flows with positive curvature operator. In fact, this is not actually a restriction, at least in low dimensions, since Ricci flow tends to force curvature towards the positive\cite{MR3997128,MR3207356,MR1714939,MR1249376}. In the compact case, positivity of the curvature operator forces the underlying manifold to be $S^n$, up to a quotient \cite{BohmWilking}. Ancient Ricci flows on $S^2$ are completely classified \cite{DHSRicci}, the only examples being the shrinking round spheres and the ``ancient sausage'' solution\footnote{Discovered independently by Fateev--Onofri--Zamolodchikov \cite{Fateev0}, King \cite{MR1214546} and Rosenau \cite{Rosenau}.}.  Remarkably, in a recent major achievement, positively-curved ancient Ricci flows on $S^3$  have also been classified, under an additional  non-collapsing assumption \cite{MR4323639}: the only examples are the shrinking round spheres and Perelman's `football' solution \cite{Perelman2}.  This classification has been extended to higher dimensions $n\geq 4$, under suitable curvature positivity assumptions \cite{RFuniquenessHighDim}. We note here that the non-collapsing assumption, however natural from the singularity formation viewpoint (due to Perelman's ``no local collapsing'' theorem \cite{Perelman1}), is not as natural in the compact case (indeed, any compact blow-up limit of a finite-time Ricci flow singularity must be a shrinking soliton \cite{ZHANG2007503}). Nonetheless, given the difficulty of the problem, these results are a remarkable achievement and provide spectacular progress towards a general classification of ancient Ricci flows in low dimensions.

Our main result in this paper is the construction of an interesting new ancient Ricci flow on $S^n$ with positive curvature operator in each dimension $n\geq 3$.

\begin{thm}\label{mainexistence}
For each $n\geq 3$, there exists an $O(2)\times O(n-1)$-invariant ancient Ricci flow on ${S}^n$ with positive curvature operator and bounded \emph{girth}.  Pointed limits of this solution are asymptotic, as $t\to -\infty$, to either a flat cylinder $S^1 \times \R^{n-1}$ or a cigar plane $(\R^2, g_{\sf cigar}) \times \R^{n-2}$. 
\end{thm}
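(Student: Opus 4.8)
The plan is to obtain the solution as a pointed limit of a sequence of closed Ricci flows on $S^n$ whose time slices are increasingly eccentric ``pancakes'' lying in a Ricci-flow--invariant class of $O(2)\times O(n-1)$-invariant metrics, and then to read off the backward asymptotics from the structure of that class. The first step is the cohomogeneity-one description: writing $S^n=\{|x|^2+|y|^2=1\}\subset\R^2\oplus\R^{n-1}$, every $O(2)\times O(n-1)$-invariant metric takes the form $g=ds^2+\varphi(s)^2 g_{S^1}+\psi(s)^2 g_{S^{n-2}}$ on $s\in[0,L]$, with the circle orbit collapsing smoothly at the ``cap'' $\{s=0\}$ --- where $S^n$ looks locally like a $D^2$-bundle over $S^{n-2}$ --- and the $S^{n-2}$ orbit collapsing at the ``rim'' $\{s=L\}$. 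Since the Ricci flow preserves isometries it stays in this class, reducing to a parabolic system for $(\varphi,\psi)$ in which the curvature operator and all of its covariant derivatives become explicit functions of $\varphi,\psi$ and finitely many $s$-derivatives. One then singles out the class $\mathcal C$ of such metrics cut out by the inequalities on the curvature and its spatial derivatives whose Ricci-flow invariance is established in this paper (by a maximum-principle argument tailored to the symmetry-reduced system --- as the abstract emphasises, this does not follow from Hamilton's tensor maximum principle), and records that membership in $\mathcal C$ (i) forces positive curvature operator, (ii) bounds the girth in terms of the reduced data, and (iii) is scale invariant; moreover $\mathcal C$ contains explicit pancakes of fixed girth and arbitrarily large eccentricity, the ratio of the $S^{n-2}$-radius $\psi(0)$ to the girth.

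Next, for each $i\in\N$ I would take an explicit $h_i\in\mathcal C$ of eccentricity $E_i\to\infty$ and let $g_i(t)$ be the Ricci flow with $g_i(0)=h_i$. By invariance of $\mathcal C$ this flow has positive curvature operator throughout and, being positively curved on a closed manifold, shrinks to a round point in finite time (Hamilton; B\"ohm--Wilking \cite{BohmWilking}); along the way its eccentricity decreases monotonically from $E_i$ towards $1$, so there is a time at which it equals a fixed value $E_0\in(1,\infty)$. Time-translating to that instant and parabolically rescaling so that the girth equals $1$ at $t=0$, and relabelling, I obtain Ricci flows $g_i(t)$ with uniformly controlled, nondegenerate geometry near $t=0$ and defined on intervals $(a_i,b_i)$ with $a_i\to-\infty$ (the more eccentric $h_i$ is, the longer the solution stays non-round relative to the girth scale). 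The spatial-derivative part of the $\mathcal C$ estimates now yields, on every compact time interval, uniform bounds on the curvature and all of its covariant derivatives, while the girth normalisation gives a uniform lower bound on the injectivity radius at points a bounded distance from the cap, so the solutions are locally noncollapsed there despite being pancake-thin. Hamilton's compactness theorem then produces a subsequential pointed limit $g_\infty(t)$ on $(-\infty,\omega)$, hence ancient; it is $O(2)\times O(n-1)$-invariant, has compact (necessarily $S^n$) time slices since the geometry was normalised at $t=0$, lies in the closure of $\mathcal C$ --- so has nonnegative curvature operator, promoted to strictly positive by the strong maximum principle because $S^n$ admits no local splitting --- and has girth bounded above. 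This last property shows $g_\infty$ is not a shrinking round sphere, and its symmetry type shows it is not Perelman's rotationally symmetric football, so it is genuinely new.

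It remains to read off the backward asymptotics from the reduced profile of $g_\infty$ as $t\to-\infty$. Boundedness of the girth keeps the circle profile $\varphi$ bounded, while $g_\infty$ being neither flat nor a shrinking sphere forces the $S^{n-2}$-radius $\psi$ and the transverse length $L(t)$ to grow without bound. Accordingly the pointed limits fall into two cases. Near the rim $\{s=L\}$, where $\psi$ vanishes linearly so that the $(s,S^{n-2})$-block opens to a flat $\R^{n-1}$ --- and throughout the bulk of the tube, where $\psi^2 g_{S^{n-2}}$ contributes a flat $\R^{n-2}$ and the $(s,\theta)$-directions an asymptotically flat $\R\times S^1$ --- the pointed limits are the flat cylinder $S^1\times\R^{n-1}$. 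Near the cap $\{s=0\}$, the $2$-dimensional profile $ds^2+\varphi(s)^2 d\theta^2$ elongates indefinitely while keeping bounded circumference and definite-sized curvature, so by the surface case --- the cigar being the only complete noncompact nonflat ancient Ricci flow on a surface --- it converges to Hamilton's cigar, while $\psi^2 g_{S^{n-2}}$ once more opens to $\R^{n-2}$; thus the pointed limit there is the cigar plane $(\R^2,g_{\sf cigar})\times\R^{n-2}$. Pinning down which basepoints produce which limit, and excluding any other subsequential limit, amounts to a monotonicity/ODE analysis of the reduced system within $\mathcal C$.

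The principal obstacle is carrying out the compactness step in a way that survives the pancake-collapsing: the solutions in the sequence are becoming geometrically degenerate, and extracting a nontrivial ancient limit requires precisely the nonstandard a priori control on the curvature \emph{and its spatial derivatives} supplied by the invariance of $\mathcal C$ --- pointwise curvature bounds alone would not suffice --- together with the verification that the limit is non-degenerate inside $\mathcal C$, i.e.\ neither a lower-dimensional object nor a solution already on the known list. Establishing that invariance (and choosing $\mathcal C$ so that it is simultaneously nonempty, invariant under the flow, positivity-forcing, and girth-controlling) is itself the most delicate ingredient; the subsequent identification of the two backward limits, while technical, is more standard.
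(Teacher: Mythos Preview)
Your overall architecture matches the paper's: build a sequence of old-but-not-ancient $O(2)\times O(n-1)$-invariant flows from increasingly eccentric initial data lying in a Ricci-flow-invariant class $\mathcal C$ (the paper takes the section $S^2$ to be an early time-slice of the ancient sausage), extract a limit, and read off the backward asymptotics. The invariant class is also essentially the one you describe: the ordering $K^\top\ge K_2^\perp\ge K_1^\perp\ge L>0$ together with the monotonicity $K^\top_s,(K_1^\perp)_s,(K_2^\perp)_s,L_s\ge 0$.

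There is, however, a genuine gap in your compactness step. You write that ``the spatial-derivative part of the $\mathcal C$ estimates now yields, on every compact time interval, uniform bounds on the curvature and all of its covariant derivatives.'' This is not how the argument goes, and in fact cannot work: the conditions defining $\mathcal C$ say only that the sectional curvatures are positive, ordered, and monotone in $s$; they impose no upper bound whatsoever on $K^\top$ at the tip. The paper obtains the curvature bound by an entirely different route: the ordering $K^\top\ge K_i^\perp$ combined with Gauss--Bonnet on the totally geodesic section gives $-8\pi t\le A(t)\le -8\pi(n-1)t$, which yields two-sided bounds on the length $\ell(t)$ (via concavity of $\psi$ and the girth bound $h\le 1$); Myers' theorem then bounds $K^\top_{\min}$ from above by $C/\ell^2$, and finally Hamilton's \emph{Harnack inequality} transfers this to a bound on $\Sc_{\max}$. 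None of these ingredients is in your outline, and without them the sequence has no reason to have bounded curvature. Relatedly, your claim that ``eccentricity decreases monotonically'' is unjustified and plays no role in the paper; the paper instead normalises by placing the singular time at $t=0$ and uses the area estimate to show the existence times $-\alpha_\tau\ge -\tau/(n-1)\to\infty$.

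A second, more subtle gap is in the asymptotics at the tip. Your appeal to the two-dimensional classification correctly identifies the limit as \emph{some} cigar $\times\,\R^{n-2}$, but you do not address why the cigar has the correct scale $\lambda=1$ (matching the asymptotic cylinder at the waist). In the paper this is a separate proposition requiring a sharpened lower bound $\ell(t)\ge -2\bar\lambda^{-1}t-C$ (obtained by comparing $\Rc_1$ near the tip to the cigar curvature) played off against a sharpened upper bound on the area (obtained via a ``partial Gauss--Bonnet'' argument and the estimate $\int_t^{-1}K^\perp(\tfrac{\pi}{2},\tau)\,d\tau=O(\log(-t))$). Your sketch ``pinning down which basepoints produce which limit \dots amounts to a monotonicity/ODE analysis'' does not capture this; it is a genuine PDE estimate, not an ODE consequence of $\mathcal C$.
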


Roughly speaking, the \emph{girth} is the length of a shortest closed geodesic; see \S \ref{sec:girth}  for the precise definition. The bounded girth condition  implies that the solution is collapsed. The group $O(2)\times O(n-1) \subset O(n+1)$ (standard block embedding) acts on  $\R^{n+1} \supset S^n$ by orthogonal transformations, with generic orbits of codimension one when restricted to $S^n$. We call the fixed-point sets of the $O(2)$- and $O(n-1)$- actions the \emph{tip} and the \emph{waist}, respectively. When the sequence of marked points stays within bounded distance from the \emph{waist}, the asymptotic limit is the flat cylinder, whereas if they stay close to the \emph{tip}, the limit is a cigar plane. 
We note that the tip and waist regions are both of the same scale: the asymptotic cylinder at the waist and the asymptotic cylinder of the asymptotic cigar both have the same radius. 

Geometrically, our  terminology for the fixed-point sets is justified by considering, for each time slice $(S^n,g(t))$, a totally geodesic $S^2$ intersecting each $O(n-1)$-orbit orthogonally. Such a surface, which we refer to as the \emph{section}, exists in fact for any $O(2)\times O(n-1)$-invariant metric on $S^n$ (indeed, the $O(n-1)$-action is polar). In the case of our solution, it is a highly elongated, rotationally symmetric two-sphere, whose equator (the waist) realises the girth, which is uniformly bounded as $t\to -\infty$, and whose poles are the so-called tips. Our solution may be thought of as `rotating' these elongated two-sphere while keeping their waist fixed; if instead of two-spheres we had highly elongated ovals, our solution in dimension 3 would resemble a pancake, with bounded thickness but diverging radius as $t\to -\infty$.

The proof of Theorem \ref{mainexistence} is in part inspired by the construction of ancient mean curvature flows in \cite{BLT1} (cf.~\cite{MR4109899}).  We consider a sequence of old-but-not-ancient $O(2)\times O(n-1)$-invariant Ricci flow solutions on $S^n$ whose initial data has a section $S^2$ which is a time-slice of the ancient sausage solution. The estimates required to extract a limit, to estimate the existence time, and to analyse the asymptotic behavior, rely crucially on a rather precise space-time control of the four different eigenvalues of the curvature operator; see Proposition \ref{prop:curvaturepreserved} (we note here that the second fundamental form in the mean curvature flow solution \cite{BLT1} has only two different eigenvalues). Unfortunately, Hamilton's tensor maximum principle is not applicable in this situation, and we must rely on a subtle analysis of the PDE system satisfied by the curvature operator under this symmetry assumption, which has unbounded coefficients due to the presence of lower-dimensional orbits.

There do not appear to be many other known examples of collapsed ancient Ricci flows with positive curvature operator. In dimension 2, a classification of \textit{all} complete ancient Ricci flows is possible: \cite{Chowetal,DHSRicci,MR2264733,MR954419,MR2301253}; all of them are collapsed with positive curvature operator, apart from those with constant sectional curvature. On $S^3$, we know that any ancient solution on $S^3$ has positive curvature (non-negativity follows from \cite{10.4310/jdg/1246888488}, and positivity by the strong maximum principle), and yet the classification result of \cite{BrendleHuiskenSinestrari} implies that this positive curvature cannot be pinched for non-round solutions. To our knowledge, an exhaustive list of known collapsed examples on $S^3$ consists of Fateev's ``ancient hypersausage'' \cite{Fateev1,Fateev2}, the homogeneous ``ancient Hopf fibrations''  
of Bakas, Kong and Ni \cite{BakasKongNi} 
(see also \cite{Buz14,CaoSaloff-Coste,IJ92,Lau13}), and a family interpolating between them, which we shall call the ``twisted ancient hypersausages'' \cite[\S 3]{BakasKongNi}. In the non-compact setting, Y.~Lai was able to construct, in each dimension $n\ge 3$, a family of ``flying wing'' steady Ricci solitons on $\R^n$, which are $O(n-1)$-invariant and interpolate between a cigar-hyperplane and Bryant's soliton \cite{Lai2,Lai1}; our construction is in some ways analogous to Lai's: in both cases, we are solving a geometric partial differential equation on a non-compact domain (hers elliptic and ours parabolic) with a symmetry assumption which leaves two degrees of freedom (subverting any attempt at reduction to a system of \textsc{ode}), and in neither case are the powerful tools related to non-collapsing available.

On the other hand, much is known about ancient Ricci flows that are \textit{non-collapsed}, at least in the presence of various curvature non-negativity assumptions. For example, non-collapsed ancient Ricci flows with uniform PIC and weak PIC2 have been classified \cite{MR4400906,MR4176064,MR4323639}; they consist of the shrinking spheres, the Bryant solitons, as well as Perelman's ``football'' solutions. If we maintain weak PIC2 and non-collapsing, and replace uniform PIC with Type-I curvature growth, the ancient Ricci flow is a symmetric space  \cite{LynchAbrego}. On the other-hand, if we drop the Type-I curvature growth condition, but keep non-negative curvature operator and non-collapsing, there appear to be many more: analogous solutions to Perelman's football \cite{Perelman2} in higher dimensions with $O(k)\times O(n+1-k)$-symmetry (see \cite{MR2604955,MR2365237} for $k=1$, and \cite{Buttsworth} for the case $k=2, n=4$); and also 
Haslhofer's new solutions \cite{Haslhofer4d}. 

We also mention some results on homogeneous ancient Ricci flows, i.e., those ancient solutions which admit a transitive and isometric group action. Homogeneous ancient solutions on spheres of any dimension have been recently classified by Sbiti \cite{Sb22}, and it appears that only the round sphere has positive curvature operator. 
Further examples on homogeneous torus bundles were constructed by Lu--Wang \cite{MR3689745}, Buzano \cite{Buz14} and B\"ohm--Lafuente--Simon \cite{MR3984075}. In fact, a compact homogeneous space admits a collapsing ancient homogeneous Ricci flow (of not necessarily positive curvature) if and only if it is the total space of a homogeneous torus bundle; all known homogeneous examples are invariant under a corresponding torus action (this is known to be necessary under certain assumptions \cite{KrishnanPediconiSbiti}) and, after appropriately rescaling, collapse the torus fibres as time tends to minus infinity and Gromov--Hausdorff converge to an Einstein metric on the base \cite{CaoSaloff-Coste,PediconiSbiti}.

Finally, despite the fact that there are clearly many ancient Ricci flows --- as indicated in the previous paragraphs --- we conjecture that our example is unique amongst $O(n-1)$-invariant ancient Ricci flows on $S^n$, $n\ge 4$, with positive curvature operator and bounded girth
.

\begin{conj}\label{mainconjecture}
Let $n\ge 4$. Then, up to isometry, parabolic rescalings and time-translation, the Ricci flow solution on $S^n$ constructed in Theorem \ref{mainexistence} is the unique $O(n-1)$-invariant ancient solution with  positive curvature operator and bounded girth.
\end{conj}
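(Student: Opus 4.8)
\emph{Sketch of a possible approach.} The plan is to adapt the uniqueness scheme for the ancient ``pancake'' in mean curvature flow \cite{BLT1} to Ricci flow, combining it with the asymptotic and spectral analysis near cylindrical ends developed for non-collapsed ancient Ricci flows in \cite{MR4323639,RFuniquenessHighDim}, but carried out throughout in the collapsed, $O(n-1)$-equivariant setting. First I would exploit the symmetry to reduce to a two-dimensional picture: an $O(n-1)$-invariant metric on $S^n$ is encoded by a metric on the quotient surface $S^n/O(n-1)$ (a disk, with the polar section a totally geodesic $S^2$) together with a warping function $f$ giving the radius of the principal $S^{n-2}$-orbit. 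One then reads off from positive curvature operator and bounded girth that $f$ and $\cd f$ are controlled, that the section is (backwards in time) a highly elongated surface, and that the flow collapses precisely along the principal orbits, which degenerate to the fixed circle (the \emph{waist}) and the two \emph{tips}. After parabolic rescaling and a time translation one normalizes, say, the girth and the position of one tip.

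Next I would run an equivariant Cheeger--Gromov compactness argument --- passing to the quotient, or to local covers on which the collapsing is undone --- to classify all pointed limits as $t\to-\infty$: they should be the flat cylinder $S^1\times\R^{n-1}$ for base points within bounded distance of the waist, and the cigar plane $(\R^2,g_{\sf cigar})\times\R^{n-2}$ for base points near a tip, exactly matching the asymptotics of the solution of Theorem~\ref{mainexistence}. This step uses the positivity of the curvature operator, the girth bound, and the two-dimensional classification of ancient flows on the section (the ancient sausage, \cite{DHSRicci}). With the limits in hand, I would write the solution near the waist (resp.\ near a tip) as a graphical perturbation of the shrinking-cylinder soliton (resp.\ of the cigar-plane soliton) and analyze the linearized Ricci-flow-modulo-diffeomorphism operator about the model: its spectrum is explicit, and the only $O(n-1)$-invariant neutral eigenfunctions are those generated by the scaling and translation symmetries, which the normalization has already eliminated. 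A Merle--Zaag-type ODE argument on the modes then forces the perturbation to decay at a definite exponential rate and pins down the neck-width asymptotics, showing they agree (after normalization) with those of the solution of Theorem~\ref{mainexistence}. This is precisely the step where the symmetry hypothesis does the essential work --- in the non-collapsed theory neutral modes obstruct such a conclusion.

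Finally, given two normalized ancient solutions with identical asymptotics at every end, I would prove they coincide by a backward unique-continuation argument for the parabolic system satisfied by the difference of the metrics --- a Carleman-estimate argument as in the Ricci-flow uniqueness literature --- using the previous step to control the difference near the degenerate loci; alternatively, one might show the moduli space of such solutions is zero-dimensional by a continuity/implicit-function argument and conclude that the normalization determines the solution uniquely. (The restriction $n\ge4$ is natural here: for $n=3$ one has $O(n-1)=O(2)$ and there are genuinely many $O(2)$-invariant ancient solutions, such as Fateev's hypersausages, so the statement would be false as stated.)

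The main obstacle, as in the construction underlying Theorem~\ref{mainexistence}, is the \emph{collapsing}. The non-collapsing-based toolkit --- Perelman's monotonicity and $\kappa$-noncollapsing, differential Harnack inequalities, unrestricted Cheeger--Gromov compactness --- is simply unavailable, so every compactness, linearization and unique-continuation estimate must be done equivariantly for a degenerate parabolic system whose coefficients blow up near the tips and the waist (cf.\ the phenomenon behind Proposition~\ref{prop:curvaturepreserved}); making the spectral analysis and the Carleman estimate uniform up to those lower-dimensional orbits is the crux. A second, possibly comparably hard, point is that the conjecture assumes only $O(n-1)$-invariance, not the full $O(2)\times O(n-1)$-symmetry of the example: one must either upgrade the symmetry --- presumably by showing the backward limits are asymptotically $O(2)$-invariant and then propagating this by rigidity --- or carry the entire argument with two genuinely surviving spatial variables and no reduction to \textsc{ode}.
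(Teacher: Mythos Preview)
The statement you are addressing is labelled as a \emph{Conjecture} in the paper, not a theorem; the paper offers no proof and no proof sketch for it. So there is nothing to compare your proposal against --- the authors explicitly leave this open, remarking only that the $n=3$ case fails because of the hypersausage family. What you have written is therefore not a proof but a research programme, and you should not present it as a proof of the statement.

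As a programme, your outline is reasonable and correctly identifies the template (the mean-curvature-flow pancake uniqueness of \cite{BLT1} combined with the spectral/Merle--Zaag machinery of \cite{MR4323639,RFuniquenessHighDim}) as well as the two genuine obstructions: the collapse (which kills the non-collapsed toolkit and forces all estimates to be done equivariantly with coefficients blowing up at the singular orbits) and the gap between the assumed $O(n-1)$-symmetry and the $O(2)\times O(n-1)$-symmetry of the example. But several of the steps you describe are, at present, open problems in their own right. In particular: the classification of backward limits you invoke already presupposes the kind of uniform curvature and length control that the paper obtains only for its \emph{specific} old-but-not-ancient sequence via the preserved inequalities \eqref{initialcurvature}; you would first need an a priori version of Propositions~\ref{lem:curvaturepreserved}--\ref{prop:curvaturepreserved} (or a substitute) valid for an arbitrary $O(n-1)$-invariant ancient solution with positive curvature operator and bounded girth, and there is no indication in the paper that this holds. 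Likewise, the linearisation about the cigar-plane is degenerate (the cigar is merely steady, not shrinking), so the ``explicit spectrum'' and the Merle--Zaag dichotomy are far from routine in that region. Until those ingredients are supplied, the sketch remains a plausible strategy rather than a proof.
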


\begin{rem}
Note that we do not assume the full $O(2)\times O(n-1)$-symmetry in this conjecture. Note also that the conjecture fails in dimension three due to the existence of the (torus fibred) ancient hypersausage. (The $n=3$ case is special is because $O(2)$ is Abelian.)
\end{rem}

\subsection*{Acknowledgements}

TBu, RL and ML wish to thank MATRIX for providing the stimulating climate which allowed this project to take off. TBu also wishes to thank Wolfgang Ziller for his interest in the project, as well as many insightful discussions about the relevant group actions. RL would like to thank Christoph Böhm for sharing his insight into Hamilton's maximum principle. TBo was supported by the Simons Foundation (grant 707699) and the National Science Foundation (grant DMS-2105026). TBu, RL and ML were supported by the Australian Research Council (grants DE220100919, DE190101063 and DE200101834, respectively).

\section{Preliminaries}\label{sec:prelims}

We begin by developing a precise notion of ``girth" suitable for Theorem \ref{mainexistence} and Conjecture~\ref{mainconjecture}. We then describe the class of Riemannian metrics on ${S}^n$ that we work with, and collect some basic facts about how these metrics behave under the Ricci flow.



\subsection{The girth of a Riemannian sphere}\label{sec:girth}

Given a smooth manifold $M$, let $\Lambda M$ be the space of piecewise smooth maps $S^1 \to M$, and let $\Lambda^0 M \subset \Lambda M$ denote the constant maps. Recall that the energy and length functionals $E, \ell: \Lambda M \to \mathbb{R}$, are defined by
\[
    E(c) := \tfrac12 \int_{S^1} |c'|^2 d\theta, \qquad \length(c) := \int_{S^1} |c'| \, d\theta.
\]
Observe that $\length(c) \leq \sqrt{2 E(c)}$, with equality when the parametrisation is proportional to arclength. Consider the homotopy class $\Sigma_{1}$ of continuous maps
\[
    F: B^{n-1} \to \Lambda M, \qquad F(\partial B^{n-1}) \subset \Lambda^0 M
\]
which have degree 1, in a sense to be made precise below. Elements of $\Sigma_1$ are called \emph{$1$-sweepouts}.

\begin{defn}
The \emph{$1$-width} and \emph{girth} of $(S^n,g)$ are defined by
\[
          {\rm width}_1(S^n,g) :=   \inf_{F\in \Sigma_{1}} \max_{p\in B^{n-1}} E(F(p)), \qquad  {\rm girth}(S^n,g) := \sqrt{2 \, \width_1(S^n,g)}.
\]
\end{defn}

It follows from \cite[Appendix 1]{Klin78} that these invariants are positive and in fact realised by a closed geodesic in $(S^n,g)$ (indeed, \cite[Lemma A.1.4]{Klin78} provides a homotopy which decreases the maximum energy of a sweepout $F$ if there are no closed geodesics of that energy level). 

Following \cite{Klin78}, we now explain how a  continuous map
\[
    F: B^{n-1} \to \Lambda M, \qquad F(\partial B^{n-1}) \subset \Lambda^0 M
\]
induces a continuous map $f: S^n \to S^n$, which we use to define the degree $\deg F := \deg f$ of the sweepout $F$. Using coordinates $(x_0, \ldots, x_n)$ for $\mathbb{R}^{n+1}$, we view $B^{n-1}$ as a half-equator 
\[
  B^{n-1} \simeq \{ x_0 \geq 0, x_1 = 0 \} \subset S^n \subset \mathbb{R}^{n+1}.
\]
For each $p\in B^{n-1}$ there is a circle $\theta\mapsto a_p(\theta)$ in $S^n$, $\theta\in S^1$, which starts at $a_p(1) = p$ in the direction of $x_1 \geq 0$, and along which the coordinates $x_2 \ldots, x_n$ remain constant. Notice that for $p\in \partial B^{n-1} \simeq \{ x_0 = x_1 = 0 \}$, $a_p(\theta) \equiv p$ is constant. Thus, any $q\in S^n$ admits a representation $q = a_p(\theta)$, for a unique $p\in B^{n-1}$, and $\theta\in S^1$ which is unique when $p\notin \partial B^{n-1}$. We then simply set
\[
 f(a_p(\theta)) :=    F(p)(\theta). 
\]
Importantly, it is easy to see that a homotopy  $[0,1]\times (B^{n-1}, \partial B^{n-1}) \to (\Lambda M, \Lambda^0 M)$  of $F$ induces a homotopy $[0,1] \times S^n \to S^n$ of $f$.  

\begin{rem}
The analogous notion of $2$-width  is known to interact well with the Ricci flow. Indeed, Colding and Minicozzi \cite{ColdingMinicozziWidth} obtained a differential inequality for the evolution of the $2$-width of any homotopy $3$-sphere under Ricci flow-with-surgery, which they were able to exploit to provide a different proof that the flow-with-surgery terminates in finite time in this setting (see also Hamilton \cite{MR1714939} and Perelman \cite{Perelman3}). 
\end{rem}
  
\begin{rem}
The above notion of width originates in work of Birkhoff \cite{Birkhoff}. A number of related (but not necessarily equivalent) notions of width have also been developed, including the Almgren--Pitts width, which is based on Almgren's isomorphism theorem and was exploited in the celebrated Almgren--Pitts min-max theory \cite{Almgren,Pitts} (see also \cite{MarquesNevesSurvey} for a survey), and Gromov's definition in terms of level sets of proper functions $M^n \to \mathbb{R}^{n-1}$ \cite{Gromov}. The later was invoked in the study of Ricci flows on surfaces by Daskalopoulos--Hamilton \cite{MR2074874}  and Daskalopoulos--\v{S}e\v{s}um \cite{MR2264733} (see also Chu \cite{MR2301253}). For the purposes of this paper, these notions of 1-width can be used in place of the 1-girth defined above. We have chosen to use the latter, however, as it easier to work with (e.g. it is always attained by the length of some smooth, closed geodesic).
\end{rem}

\subsection{$O(n-1)$-invariant Riemannian metrics}\label{on-1} Consider the standard action of $O(n-1)$ on $S^n \subset \mathbb{R}^{n+1}$ by rotating the last $n-1$ coordinates, and let $g$ be any $O(n-1)$-invariant Riemannian metric on ${S}^n$, i.e.~ one for which  $O(n-1)$ acts by isometries. The image $\Sigma \subset S^n$ of the embedding $\gamma:{S}^2\to {S}^{n}$,
\begin{align*}
\gamma(x_1,x_2,x_3)=(x_1,x_2, 0,\cdots,0, x_3),
\end{align*}
is a totally-geodesic submanifold of $(S^n,g)$, as it co-incides with the fixed point set of the closed subgroup $O(n-2)\subset O(n-1)$ rotating the $n-2$ coordinates $x_3, \ldots, x_{n}$, which is non-trivial for $n\geq 3$. Similarly, the image $\gamma(E)$ of the equator $E=\{(x_1,x_2,0)\in {S}^2\}$ is a closed geodesic, being the fixed point set of the full group $O(n-1)$.  Given any $p\in S^n\setminus \gamma(E)$, its $O(n-1)$-orbit is an $(n-2)$-dimensional sphere, intersecting $\Sigma$ exactly twice. We thus define $\Sigma_+ := \gamma(S_+^2)$
where $S_+^2 = \{ x_3 > 0\}$ denotes the northern hemisphere. It follows that $\Sigma_+$ intersects each $O(n-1)$-orbit exactly once, and $\partial \Sigma_+ = \gamma(E)$ is a closed geodesic.

For each point $x\in{S}^2\setminus E$ we write 
\begin{align}\label{ch2decomp}
T_{\gamma(x)}{S}^n=T_{\gamma(x)}\gamma({S}^2)\oplus K,
\end{align}
where $K$ is the subspace of $\mathbb{R}^{n+1}$ consisting of points $x$ where the first, second and last entries are $0$ (recall that $n+1 \geq  4$, thus $K\neq 0$). The Euclidean inner product on $\mathbb{R}^{n+1}$ induces an $O(n-2)$-invariant inner product $Q$ on $K$. 

\begin{lem}
Let $g$ be an $O(n-1)$-invariant Riemannian metric on ${S}^{n}$, $n\geq 3$. Along $\Sigma_+$, $g$ can be written as a warped product
\begin{align}\label{ch2metricform}
g = g^\top + \varphi^2 g_{S^{n-2}},
\end{align}
where $g^\top = g|_{T\Sigma}$, $g_{S^{n-2}}$ is the round metric on $S^{n-2}$, and $\varphi : \Sigma^+ \to \mathbb{R}_{>0}$ is smooth.
\end{lem}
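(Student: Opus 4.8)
The plan is to prove \eqref{ch2metricform} by a representation-theoretic argument, establishing the decomposition first pointwise along $\Sigma_+$ and then upgrading it to the open dense set $S^n\setminus\gamma(E)$ via equivariance. Fix $p=\gamma(x)\in\Sigma_+$, so $x=(x_1,x_2,x_3)$ with $x_3>0$, and let $H\cong O(n-2)$ be the $O(n-1)$-stabiliser of $p$ (rotations of the coordinates $x_3,\dots,x_n$). Under the splitting \eqref{ch2decomp}, the isotropy representation of $H$ on $T_pS^n$ is the trivial representation on $T_p\gamma(S^2)$ together with the standard representation of $O(n-2)$ on $K\cong\R^{n-2}$. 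The latter is irreducible with endomorphism algebra $\R$ (for $n=3$ it is the sign representation of $O(1)$, and for $n=4$ the reflection in $O(2)$ destroys the complex structure), and it is inequivalent to the trivial representation. Since $g_p$ is $H$-invariant, Schur's lemma forces the off-diagonal block to vanish, i.e. $T_p\gamma(S^2)\perp_{g_p}K$, and forces $g_p|_K=c(p)\,Q$ for a single scalar $c(p)>0$. (The orthogonality half is, alternatively, just the standard fact that the polar $O(n-1)$-action meets each orbit orthogonally along the section $\gamma(S^2)$.)

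Next I would convert the factor $c(p)\,Q$ into a multiple of the round metric. The $O(n-1)$-orbit through $p$ is the Euclidean sphere $\mathcal{O}_p=\{(x_1,x_2,x_3v):v\in S^{n-2}\}$ of radius $x_3$, and $K=T_p\mathcal{O}_p$; scaling by $x_3$ identifies $K$ with $T_eS^{n-2}$, where $e$ is the basis vector in the direction of the last coordinate, and carries $Q$ to $x_3^2\,g_{S^{n-2}}$. Hence $g_p|_K=\varphi(p)^2\,g_{S^{n-2}}$ with $\varphi(p)^2:=c(p)\,x_3^2$. Because $g$ is smooth and $x_3>0$ on $\Sigma_+$, the ratio $c$ is a smooth positive function there, and therefore so is $\varphi$.

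Finally, to pass from the pointwise statement to one holding on a neighbourhood of $\Sigma_+$, I would use the $O(n-1)$-equivariant diffeomorphism $\Phi:\Sigma_+\times S^{n-2}\to S^n\setminus\gamma(E)$, $(\gamma(x),v)\mapsto(x_1,x_2,x_3v)$, where $O(n-1)$ acts trivially on the first factor and standardly on $S^{n-2}$. Along $\Sigma_+\times\{e\}$ the map $\Phi$ is just the inclusion $\Sigma_+\hookrightarrow S^n$, and $d\Phi$ carries the splitting $T\Sigma_+\oplus TS^{n-2}$ onto $T\gamma(S^2)\oplus K$, so the previous two paragraphs compute $\Phi^*g$ at every point of $\Sigma_+\times\{e\}$, giving $\Phi^*g=g^\top+\varphi^2\,g_{S^{n-2}}$ there with $g^\top=g|_{T\Sigma}$. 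For arbitrary $v\in S^{n-2}$, choose $R\in O(n-1)$ with $Re=v$; since $R$ acts as an isometry of $\Phi^*g$ fixing the $\Sigma_+$ factor, with $dR$ the identity on $T\Sigma_+$ and an isometry of $g_{S^{n-2}}$, the same decomposition with the \emph{same} warping function $\varphi(\sigma)$ holds at $(\sigma,v)$. This yields \eqref{ch2metricform}.

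Most of the work is bookkeeping: identifying the isotropy representation under \eqref{ch2decomp} and verifying the equivariance of $\Phi$. The one genuinely delicate point I anticipate is the Schur step — specifically checking that the small cases $n=3,4$ really do give $\mathrm{End}_H(K)=\R$, so that $g_p|_K$ is a single scalar multiple of $Q$ rather than a more general $H$-invariant form; once that is confirmed, the rest is routine.
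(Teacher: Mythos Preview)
Your proposal is correct and follows essentially the same approach as the paper: both argue pointwise along $\Sigma_+$ by identifying the isotropy $O(n-2)$-representation on $T_pS^n=T_p\Sigma\oplus K$ and invoking Schur's lemma to get orthogonality of the splitting and $g|_K=c(p)Q$. Your write-up is more thorough---you explicitly address the small cases $n=3,4$, make the passage from $Q$ to $g_{S^{n-2}}$ precise, and spell out the equivariant product structure on $S^n\setminus\gamma(E)$---whereas the paper dispatches the lemma in two sentences; but the mathematical content is the same.
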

\begin{proof}
For non-equator points $p\in \Sigma_+$, the action of the isotropy subgroup $O(n-2)$ on $T_p{S}^n = T_p \Sigma \oplus K$ is trivial on $T_p \Sigma$ and  contains no trivial subspaces in $K$ (even if $n=3$, where the action is simply a reflection). By Schur's lemma it follows that $g$ makes \eqref{ch2decomp} orthogonal, and that restricted to $K$ it must be a multiple of $Q$.   
\end{proof}
\begin{rem}
The converse of this lemma is not  true. In order for the expression in \eqref{ch2metricform} to define a Riemannian metric $g$ which extends smoothly to all of ${S}^n$ it is necessary that 
\[
	\bar\varphi(x_1,x_2,x_3) :=\begin{cases}	
		\varphi(\gamma(x_1,x_2,x_3)), \qquad &x_3 > 0;\\ 
		0, \qquad  &x_3 = 0;\\
		- \varphi(\gamma(x_1,x_2,x_3)), \qquad &x_3 < 0,
	\end{cases}
\]
is a smooth function on $S^2$.
\end{rem}

\subsection{$O(2)\times O(n-1)$-invariant Riemannian metrics}\label{sec:cohom1_metrics}
The natural action of $O(2)\times O(n-1)$ on $S^n\subset \mathbb{R}^2\oplus \mathbb{R}^n$ is of cohomogeneity one. In the coordinates described below, the quotient map of this action is given by $r : S^n \to [0,\pi/2]$. The principal orbits $r^{-1}(r_0)$, $r_0\in(0,\pi/2)$, are diffeomorphic to $S^1\times S^{n-2}$. The singular orbit $r^{-1}(0) \simeq S^1$ will be called the \emph{waist}, and the singular orbit $r^{-1}(\pi/2) \simeq S^{n-2}$ will be referred to as the \emph{tip}. This terminology is justified when considering the 2-dimensional picture obtained by quotienting out the $O(n-1)$-action.

To describe $O(2)\times O(n-1)$-invariant Riemannian metrics on ${S}^n$, we define  
\[
	\gamma:[0,\tfrac{\pi}{2}]\to {S}^{n}, \qquad \gamma(r)=(\cos(r),0,\ldots,0,\sin(r)).
\]
This curve starts at the waist, ends at the tip, and intersects all the orbits  exactly once. Moreover, the reflection about the $\langle e_1, e_{n+1}\rangle$-plane in $\mathbb{R}^{n+1}$ is an element in $O(2)\times O(n-1)$ which pointwise fixes the curve, thus $\gamma$ is a geodesic for any invariant metric. The principal domain for the action (i.e.~the union of all orbits through $\gamma(r)$ for $r\in (0,\pi/2)$) is diffeomorphic to $(0,\pi/2) \times S^1 \times S^{n-2}$. We denote by $\theta$ the standard coordinate on $S^1$ and by $g_{S^{n-2}}$ the round metric on $S^{n-2}$. Also, given a smooth function $f: [0,\pi/2] \to \mathbb{R}$, we call it \emph{even} (resp.~\emph{odd}) \emph{at $0$}, if its even (resp.~odd) extension to $(-\pi/2, \pi/2)$ is smooth. Analogously terminology will be used at $\pi/2$.     We then have:



\begin{prop}\label{prop:ch1mfsmoothness}
For any $O(2)\times O(n-1)$-invariant Riemannian metric $g$ on ${S}^n$, $n\geq 3$ there exist smooth functions $\chi,\psi,\varphi:[0,\frac{\pi}{2}]\to \mathbb{R}$, positive on $(0,\pi/2)$, so that 
\begin{align}\label{ch1mf}
g_{\gamma(r)}=\chi(r)^2 dr^2 +\psi(r)^2 d\theta^2 +\varphi(r)^2 g_{S^{n-2}}, \qquad \hbox{for all } r\in (0,\pi/2),
\end{align}
while at $r=0,\frac{\pi}{2}$, the following `smoothness conditions' are satisfied:
\begin{enumerate}
\item $\chi$ is even and positive at $0$ and $\pi/2$;
\item $\psi$ is even and positive at $0$ and odd at $\pi/2$, and $\psi'(\tfrac{\pi}{2}) = -\chi(\tfrac{\pi}{2})$;
\item $\varphi$ is even and positive at $\pi/2$ and odd at $0$, and $\varphi'(0) = \chi(0)$.
\end{enumerate}
Conversely,  any smooth functions $\chi,\psi,\varphi$ satisfying (1)-(3) give rise to a metric via \eqref{ch1mf} which extends to a smooth, $O(2)\times O(n-1)$-invariant Riemannian metric on ${S}^n$. 
\end{prop}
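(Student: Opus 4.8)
\emph{Strategy.} Writing $G:=O(2)\times O(n-1)$, the plan is to treat separately the interior of the principal domain $G\cdot\gamma((0,\pi/2))\cong(0,\pi/2)\times S^1\times S^{n-2}$ and $G$-invariant tubular neighbourhoods of the waist $r^{-1}(0)$ and the tip $r^{-1}(\pi/2)$. On the principal domain, the warped form \eqref{ch1mf} and positivity of $\chi,\psi,\varphi$ follow from Schur's lemma exactly as in the lemma of \S\ref{on-1}. The boundary conditions (1)--(3) are then read off from --- and, for the converse, are seen to be sufficient for --- the smooth extendability of \eqref{ch1mf} across the two collapsing fibres; in suitable slice coordinates this reduces to the classical smoothness criterion for rotationally symmetric warped products.

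\emph{The principal domain.} For every $r\in(0,\pi/2)$ the isotropy group of $\gamma(r)$ is the $r$-independent group $H:=\{\diag(1,\pm1)\}\times O(n-2)\cong\Z_2\times O(n-2)$, where $O(n-2)\subset O(n-1)$ stabilises $e_{n+1}$; it fixes all of $\gamma$ pointwise, which is why $\gamma$ is a geodesic meeting each orbit orthogonally. Differentiating the relevant one-parameter subgroups, the orbit tangent space at $\gamma(r)$ equals $\langle e_2\rangle\oplus\langle e_3,\ldots,e_n\rangle$, so $T_{\gamma(r)}S^n$ decomposes $H$-equivariantly as $\R\,\gamma'(r)\oplus\R\,\partial_\theta\oplus\langle e_3,\ldots,e_n\rangle$, and these three summands are pairwise inequivalent irreducible $H$-modules: the trivial one, the sign module of the $\Z_2$-factor, and the standard module of $O(n-2)$ (the latter two remain inequivalent when $n=3$, where $O(n-2)=\Z_2$ acts by $-1$ on the third summand but trivially on the second). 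As in \S\ref{on-1}, Schur's lemma forces a $G$-invariant metric $g$ to be block diagonal with respect to this splitting, with the third block a multiple of $g_{S^{n-2}}$; transitivity of the $O(n-1)$-factor on the orbit sphere makes the three coefficients depend on $r$ alone. This yields \eqref{ch1mf} on $(0,\pi/2)$ with $\chi,\psi,\varphi>0$, smooth in $r$ because $g$ and $\gamma$ are.

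\emph{The singular orbits.} By the slice theorem, a $G$-invariant tubular neighbourhood of the waist $r^{-1}(0)\cong S^1$ is $G$-equivariantly diffeomorphic to $G\times_{H_0}\R^{n-1}$, where $H_0=\Z_2\times O(n-1)$ acts on the normal slice $\R^{n-1}$ through the standard $O(n-1)$-representation; along $\gamma$ the radial slice coordinate is the $g$-arclength $\rho(r)=\int_0^r\chi(s)\,ds$. In these terms \eqref{ch1mf} becomes $d\rho^2+\psi^2\,d\theta^2+\varphi^2 g_{S^{n-2}}$: a family over $S^1$ of rotationally symmetric metrics $d\rho^2+\varphi^2 g_{S^{n-2}}$ on $\R^{n-1}$, together with the term $\psi^2\,d\theta^2$ along the waist whose coefficient varies over the slice. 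By the classical extension criterion --- $d\rho^2+f(\rho)^2 g_{S^{k-1}}$ on $\R^k$ is smooth exactly when $f$ is the restriction to $\rho\ge0$ of an odd smooth function with derivative $1$ at $\rho=0$, and a coefficient function of $\rho$ on the base is smooth exactly when it is the restriction of an even smooth function --- this extends to a smooth metric iff $\varphi$ is odd in $\rho$ with $\rho$-derivative $1$ at $0$, and $\psi$ is even and positive in $\rho$. Since the change of variables $r\mapsto\rho$ (and the curve $\gamma$) is smooth precisely when $\chi$ is even and positive at $0$, in which case $\rho'(0)=\chi(0)>0$, rewriting these conditions in the variable $r$ gives exactly: $\chi$ even positive at $0$; $\psi$ even positive at $0$; $\varphi$ odd at $0$ with $\varphi'(0)=\chi(0)$. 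The identical analysis at the tip $r^{-1}(\pi/2)\cong S^{n-2}$ --- whose normal slice is $\R^2$ with the standard $O(2)$-action, so that the $2$-plane now carries the coordinates $(\rho,\theta)$, $\varphi^2 g_{S^{n-2}}$ plays the role of the base metric, and $\rho=\int_r^{\pi/2}\chi(s)\,ds$ --- gives $\chi$ even positive at $\pi/2$; $\varphi$ even positive at $\pi/2$; $\psi$ odd at $\pi/2$ with $\psi'(\pi/2)=-\chi(\pi/2)$. These are precisely (1)--(3), and for the converse one simply runs the criterion backwards on the model pieces $G\times_{H_0}\R^{n-1}$ and $G\times_{H_{\pi/2}}\R^2$: (1)--(3) make the relevant function ($\varphi$ at the waist, $\psi$ at the tip) an odd smooth function of $\rho$ with $\rho$-derivative $1$ at $0$, and the complementary squared function an even smooth positive function of $\rho$, hence a smooth function of the Euclidean slice coordinates; so \eqref{ch1mf} extends smoothly and $G$-invariantly over each singular orbit, agreeing with the principal-domain metric on the overlap.

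\emph{Main obstacle.} The only non-formal part is the singular-orbit analysis: one must identify the two slice representations correctly and, above all, carefully track the change of variables between the geodesic parameter $r$ and the normal arclength $\rho$. It is precisely this reparametrisation, combined with the $f$ odd, $f'(0)=1$ normalisation in the classical criterion, that produces the at-first-glance unexpected boundary identities $\varphi'(0)=\chi(0)$ and $\psi'(\pi/2)=-\chi(\pi/2)$ (rather than, say, derivative $1$). A more pedestrian route, avoiding the slice theorem altogether, is to pull $g$ back under an explicit smooth parametrisation of $S^n$ near each singular orbit --- written in the ambient coordinates $(x_1,\ldots,x_{n+1})$ --- and to determine directly when the components of the resulting tensor are smooth functions of those coordinates.
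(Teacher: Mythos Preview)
Your proof is correct and follows essentially the same approach as the paper, only in much greater detail: the paper's own argument is a three-sentence sketch that deduces the block-diagonal form from invariance under specific reflections and then defers the entire smoothness analysis at the singular orbits to \cite[Ch.~4]{Petersen}. Your treatment via the isotropy representation and Schur's lemma on the principal part, and via the slice theorem together with the classical odd/even criterion at the waist and tip, is exactly what Petersen's chapter supplies; in particular your careful tracking of the $r\leftrightarrow\rho$ reparametrisation is what produces the boundary identities $\varphi'(0)=\chi(0)$ and $\psi'(\tfrac{\pi}{2})=-\chi(\tfrac{\pi}{2})$.
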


\begin{proof}
By considering the reflections through the subspaces $\langle e_2\rangle, \langle e_3, \ldots, e_n \rangle \subset \mathbb{R}^{n+1}$ it  follows that any invariant metric makes the factors $(0,\pi/2)$, $S^1$ and $S^{n-2}$ orthogonal. Since its restriction to $S^{n-2}$ is $O(n-1)$-invariant, said restriction must be a multiple of $g_{S^{n-2}}$ and hence \eqref{ch1mf} follows. Regarding the smoothness conditions, these are well-known and we refer the reader to \cite[Ch.~4]{Petersen} for the details.
\end{proof}

Note that since these metrics are $O(n-1)$-invariant, all of the material of \S\ref{on-1} still applies. In particular, we denote by
 $g^{\top}$ the Riemannian metric on the totally geodesic submanifold $\Sigma \simeq {S}^2 \subset S^n$, which on the principal part is given by  
 \[
 	g_{\gamma(r)}^\top = \chi(r)^2 dr^2+\psi(r)^2 d\theta^2, \qquad r\in (0,\pi/2).
 \] 
 Note that $g^{\top}$ is now $O(2)$-invariant, instead of merely $O(1)$-invariant.

\subsection{$O(2)\times O(n-1)$-invariant Ricci flow on ${S}^n$}

Consider now a \emph{time-dependent} Riemannian metric $g$ on ${S}^n$ which is evolving by Ricci flow. If the initial metric is $O(2)\times O(n-1)$-invariant, the uniquess and diffeomorphism-invariance properties of the Ricci flow imply that the Riemannian metric will continue to be $O(2)\times O(n-1)$-invariant for all future times. As a consequence, on the principal domain the Riemannian metric $g$ must have the form of \eqref{ch1mf} for some functions $\chi,\psi,\varphi$ which are now time-dependent. Then, using the formula for Ricci curvature \eqref{ch1ricc}, the Ricci flow equation gives rise to a system of PDEs for the functions $\chi(r,t),\psi(r,t),\varphi(r,t)$. Introducing now the (time-dependent) arc-length parameter $s(r,t)=\int_0^r \chi(v,t)dv$, the Ricci flow equation  becomes
\begin{subequations}\label{eq:doubly warped Ricci flow}
\ba
\chi_t={}&\chi\left(\frac{\psi_{ss}}{\psi}+(n-2)\frac{\varphi_{ss}}{\varphi}\right)\label{eq:doubly warped Ricci flow chi},\\
\psi_t={}&\psi_{ss}+(n-2)\frac{\psi_s\varphi_s}{\varphi}\label{eq:doubly warped Ricci flow psi},\\
\varphi_t={}&\varphi_{ss}+\frac{\varphi_s\psi_s}{\psi}-(n-3)\frac{1-\varphi_s^2}{\varphi}\,.\label{eq:doubly warped Ricci flow phi}
\ea
\end{subequations}

Note that the commutator of $\pd_t$ and $\pd_s$ is given by
\begin{equation}\label{eq:commutator}
[\pd_t,\pd_s]
=-\left(\frac{\psi_{ss}}{\psi}+(n-2)\frac{\varphi_{ss}}{\varphi}\right)\pd_s.
\end{equation}

\subsection{Evolution of curvature under $O(2)\times O(n-1)$-invariant Ricci flow}

As described in Appendix \ref{geometry}, the Riemann curvature operator $\Rm: TS^n\wedge TS^n\to TS^n\wedge TS^n$ of the metric \eqref{ch1mf} has at most four distinct eigenvalues, and these co-incide with the following sectional curvatures:
\begin{itemize}
\item $K^{\top} = -\frac{\psi_{ss}}{\psi}$, the Gauss curvature of $({S}^2,g^{\top})$ (multiplicity one);
\item $K_1^{\perp} = -\frac{\varphi_{ss}}{\varphi}$, the sectional curvature of planes spanned by $\partial_r$ and ${S}^{n-2}$ (multiplicity $n-2$);
\item $K_2^{\perp} = -\frac{\psi_s\varphi_s}{\psi \varphi}$, the sectional curvature of planes between $\partial_{\theta}$ and ${S}^{n-2}$ (multiplicity $n-2$);
\item $L = \frac{1 - \varphi_s^2}{\varphi^2}$, the sectional curvature in the ${S}^{n-2}$ direction (only appears if $n\ge 4$; multiplicity 
$\binom{n-2}{2}$).  
\end{itemize}
Notice that even though these four quantities are only defined along principal orbits, the smoothness conditions from Proposition \ref{prop:ch1mfsmoothness} ensure  that they can be extended to smooth, invariant scalar functions on $S^n$. 

It is well-known that, with respect to a moving orthonormal frame, the curvature operator evolves under the Ricci flow according to
\begin{align*}
\frac{\partial \Rm}{\partial t}=\Delta{\Rm}+2(\Rm^2+\Rm^{\sharp}),
\end{align*}
where $\Delta$ denotes the tensor Laplacian and the operator $\cdot^\sharp$ denotes the Lie algebra square, both taken with respect to the time-varying metric $g(t)$, and $\frac{\pd}{\pd t}$ is interpreted according to the Uhlenbeck trick (see \cite[\S 2.7.2]{Chowetal} or \cite[\S 6]{AndrewsHopper}). Using the expressions for all of these terms in Appendix \ref{geometry}, we obtain the following evolution equations:
\begin{subequations}\label{eq:evolve curvatures}
\ba\label{eq:evolve Ktop}
(\pd_t-\Delta)K^\top={}&2\left((K^\top)^2+(n-2)K_1^\perp K_2^\perp\right)-2(n-2)\left(\frac{\varphi_s}{\varphi}\right)^2(K^\top-K_2^\perp),\\
(\pd_t-\Delta)K_1^\perp={}&2\left((K_1^\perp)^2+K^\top K_2^\perp+(n-3)K_1^\perp L\right)\nonumber\\
{}&+2\left(\frac{\psi_s}{\psi}\right)^2(K^\perp_2-K_1^\perp)-2(n-3)\left(\frac{\varphi_s}{\varphi}\right)^2(K^\perp_1-L)\,,\label{eq:evolve K1perp}\\
(\pd_t-\Delta)K_2^\perp={}&2\left((K_2^\perp)^2+K^\top K_1^\perp+(n-3)K_2^\perp L\right)\nonumber\\
{}&-2\left(\frac{\psi_s}{\psi}\right)^2(K_2^\perp-K_1^\perp)+2\left(\frac{\varphi_s}{\varphi}\right)^2(K^\top-K_2^\perp)\,,\;\;\text{and}\label{eq:evolve K2perp}\\
(\pd_t-\Delta)L={}&2\left((K_1^\perp)^2+(K_2^\perp)^2+(n-3)L^2\right)+4\left(\frac{\varphi_s}{\varphi}\right)^2(K^\perp_1-L)\,,\label{eq:evolve L}
\ea
\end{subequations}
where here $\Delta$ is the Laplace-Beltrami operator acting on functions. By differentiating these expressions in $s$, and using \eqref{eq:commutator}, we also obtain
\begin{subequations}\label{eq:evolve curvature gradients}
\ba\label{eq:evolve Ktop grad}
(\pd_t-\Delta)K^\top_s={}&\left(4K^\top-\left(\frac{\psi_{s}}{\psi}\right)^2-3(n-2)\left(\frac{\varphi_{s}}{\varphi}\right)^2\right)K^\top_s\nonumber\\
{}&+2(n-2)K_2^\perp(K_1^\perp)_s+2(n-2)\left(K_1^\perp+\left(\frac{\varphi_s}{\varphi}\right)^2\right)(K_2^\perp)_s\nonumber\\
{}&+4(n-2)\frac{\varphi_s}{\varphi}\left(\left(\frac{\varphi_s}{\varphi}\right)^2+K_1^\perp\right)(K^\top-K_2^\perp)
\ea
and
\ba\label{eq:evolve K2perp grad}
(\pd_t-\Delta)(K^\perp_1)_s={}&\left[4K_1^\perp+(n-3)L-3\left(\frac{\psi_s}{\psi}\right)^2-(3n-8)\left(\frac{\varphi_s}{\varphi}\right)^2\right](K_1^\perp)_s+2K_2^\perp K^\top_s\nonumber\\
{}&+2\left[K^\top+\left(\frac{\psi_s}{\psi}\right)^2\right](K_2^\perp)_s+\left[(n-3)K_1^\perp+2(n-3)\left(\frac{\varphi_s}{\varphi}\right)^2\right]L_s\nonumber\\
{}&-4\frac{\psi_s}{\psi}\left[K^\top+\left(\frac{\psi_s}{\psi}\right)^2\right](K_2^\perp-K_1^\perp)+2\frac{\varphi_s}{\varphi}\left[K_1^\perp+\left(\frac{\varphi_s}{\varphi}\right)^2\right](K_1^\perp-L)\,.
\ea
\end{subequations}



\section{Existence}

We will construct an $O(2)\times O(n-1)$-invariant ancient Ricci flow by taking a limit of a family of ``very old'' Ricci flow solutions which start at symmetric extensions to $S^n$ by ``very early'' time slices of the $O(2)$-invariant ancient sausage Ricci flow on ${S}^2$ (see Appendix \ref{app_KR}).

\subsection{The initial metrics}
For each $\tau<0$, consider the functions 
\begin{align}\label{initialfunctions}
\begin{split}
\chi_{\tau}(r)&=\sqrt{\frac{\tanh(-2\tau)}{1-\sin^2(r)\tanh^2(-2\tau)}},\\
\psi_{\tau}(r)&=\chi_{\tau}(r)\cos(r),\\
\varphi_{\tau}(r)&=\frac{\arctanh(\tanh(-2\tau)\sin(r))}{\sqrt{\tanh(-2\tau)}}.
\end{split}
\end{align}
It is straightforward to verify that these functions satisfy the boundary smoothness conditions of Proposition \ref{prop:ch1mfsmoothness}, so the expression
\begin{align}\label{eqn_g_tau}
	\underline g_{\tau} := \chi_\tau^2 \, dr^2 + \psi_\tau^2 \, d\theta^2 + \varphi_\tau^2 \, g_{S^{n-2}}.
\end{align}
defines a smooth $O(2)\times O(n-1)$-invariant Riemannian metric on $S^n$.

Notice that 
\[
	\chi_{\tau}^2 \, dr^2+\psi_{\tau}^2  \, d\theta^2 = g_{\textrm{sausage}}(\tau)
\]
is a time slice of the ancient sausage solution on $S^2$ (see Appendix \ref{app_KR}), whereas $\varphi_{\tau}$ has been chosen so that $\frac{\varphi_{\tau}'(r)}{\psi_{\tau}(r)\chi_{\tau}(r)} = C$ is constant in $r$ (the value of $C$ is determined by $\chi_{\tau},\psi_{\tau}$ and the smoothness conditions from Proposition \ref{prop:ch1mfsmoothness}). This in particular yields that the sectional curvatures of $\underline g_\tau$ satisfy $K_{1}^{\perp}=K_{2}^{\perp}$.  In fact, even more is true:

\begin{lem}\label{lem_sec_gtau}
The sectional curvatures of $(S^n,\underline g_{\tau})$ satisfy
\begin{subequations}
\label{initialcurvature}
\ba \label{eq:Korder}
K^\top\ge K^\perp_2\ge K_1^\perp\ge L>0,
\ea
and
\ba\label{eq:Ksorder}
 K^\top_s,\,(K_1^\perp)_s,\,(K_2^\perp)_s,\,L_s\ge 0 .
 \ea
\end{subequations}
\end{lem}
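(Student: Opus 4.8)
The plan is to verify \eqref{initialcurvature} by direct computation from the explicit formulas \eqref{initialfunctions}, taking advantage of the special structure of $\underline g_\tau$. The key simplifying feature is that $\varphi_\tau$ was engineered so that $\tfrac{(\varphi_\tau)_s}{\varphi_\tau} \cdot \tfrac{\varphi_\tau}{\varphi_\tau} $ ... more precisely so that $K_1^\perp = K_2^\perp$ everywhere; this follows because $\varphi_s/(\psi\chi)$-type ratios are controlled and, after passing to the arclength parameter $s$, one finds $\varphi_s = $ (const) $\cdot \psi \chi / \chi = $ const $\cdot \psi$ is wrong in general, so I would instead just note $K_1^\perp = -\varphi_{ss}/\varphi$ and $K_2^\perp = -\psi_s\varphi_s/(\psi\varphi)$ and show these agree using $\varphi' = C\psi\chi$, i.e. $\varphi_s = C\psi$; differentiating again in $s$ gives $\varphi_{ss} = C\psi_s$, whence $K_1^\perp = -C\psi_s/\varphi = -\psi_s(C\psi)/(\psi\varphi) = -\psi_s\varphi_s/(\psi\varphi) = K_2^\perp$. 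So the chain \eqref{eq:Korder} collapses to proving $K^\top \ge K_1^\perp$ and $K_1^\perp \ge L > 0$, together with positivity.

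Next I would evaluate everything in closed form. Since the $(r,\theta)$-part of $\underline g_\tau$ is exactly the ancient sausage metric $g_{\sf sausage}(\tau)$ on $S^2$, its Gauss curvature $K^\top$ is the (explicitly known, positive) curvature of the sausage solution at time $\tau$; in particular $K^\top>0$ and one has an explicit formula for it in terms of $r$ and $\tau$ (from Appendix \ref{app_KR}). For $L = (1-\varphi_s^2)/\varphi^2$: using $\varphi_s = \varphi'/\chi$ and the formulas \eqref{initialfunctions} one computes $\varphi'(r) = \tfrac{\cos r}{\sqrt{\tanh(-2\tau)}}\cdot \tfrac{\tanh(-2\tau)}{1-\sin^2 r\,\tanh^2(-2\tau)} = \sqrt{\tanh(-2\tau)}\,\tfrac{\cos r}{1-\sin^2 r\,\tanh^2(-2\tau)}$, so that $\varphi_s = \varphi'/\chi = \sqrt{\tanh(-2\tau)}\cdot\tfrac{\cos r}{\sqrt{1-\sin^2 r\,\tanh^2(-2\tau)}}\cdot\sqrt{\tfrac{1-\sin^2 r\,\tanh^2(-2\tau)}{\tanh(-2\tau)}} = \cos r / \sqrt{1 - \sin^2 r\,\tanh^2(-2\tau)}\cdot\sqrt{\tanh(-2\tau)}$; squaring, $\varphi_s^2 < 1$ as long as $\tanh^2(-2\tau)<1$ which always holds, giving $L>0$. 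This also gives a clean closed form for $L$. One then computes $K_1^\perp = -\varphi_{ss}/\varphi$ either by differentiating $\varphi_s$ in $s$ (using $\partial_s = \chi^{-1}\partial_r$) or, more slickly, via the Riccati-type identity $\varphi_{ss}\varphi = -K_1^\perp\varphi^2$ and the Gauss equation relating $K_1^\perp$, $\varphi_s$ and the ambient geometry.

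The heart of the matter is then the two pointwise inequalities $K^\top \ge K_1^\perp$ and $K_1^\perp \ge L$, plus the derivative inequalities \eqref{eq:Ksorder}. For the first, I would reduce, after clearing positive denominators, to an elementary inequality in the two variables $u=\sin r\in[0,1]$ and $a = \tanh(-2\tau)\in(0,1)$; I expect it to factor nicely or to follow from $K^\top - K_1^\perp$ being a manifestly nonnegative rational expression (monotone in $a$, with the boundary cases $a\to 0$ and $a\to 1$, and $r=0$, $r=\pi/2$ checked separately using the smoothness conditions of Proposition \ref{prop:ch1mfsmoothness}). The derivative bounds \eqref{eq:Ksorder} amount to showing each of the four explicit curvature functions, as functions of $s$ (equivalently of $r$, since $s$ is increasing in $r$), is nondecreasing on the principal interval; since each is an explicit elementary function of $u=\sin r$ and $a$, this is a single-variable monotonicity check for each, e.g. by showing the $r$-derivative has a fixed sign.

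The main obstacle I anticipate is purely computational bookkeeping: assembling the closed-form expressions for $K^\top$, $K_1^\perp=K_2^\perp$ and $L$ from \eqref{initialfunctions} without arithmetic slips, and then verifying that the resulting two-variable rational inequalities $K^\top\ge K_1^\perp\ge L$ and the four monotonicity statements hold on $[0,\pi/2]\times\R_{<0}$ — including the boundary points $r=0,\pi/2$ where some curvatures coincide or where L'Hôpital-type limits are needed. There is no conceptual difficulty and no maximum principle is required here, since this lemma concerns only the fixed initial data $\underline g_\tau$; the subtle dynamical preservation of these inequalities under the flow is the content of the later Proposition \ref{prop:curvaturepreserved}, not of this lemma.
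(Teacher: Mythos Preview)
Your approach is essentially the paper's: exploit $\varphi_s = C\psi$ (hence $K_1^\perp \equiv K_2^\perp$) and then verify the remaining pointwise inequalities by direct computation from \eqref{initialfunctions}. Two corrections are worth making. First, the expressions for $K_2^\perp$ and $L$ are \emph{not} rational in $u=\sin r$ and $a=\tanh(-2\tau)$: they involve $\arctanh(au)$ through $\varphi_\tau$, and the paper's proof of $K^\top\ge K_2^\perp$ and $K_2^\perp\ge L$ reduces precisely to the elementary transcendental bounds $x/\arctanh(x)\le 1$ and $\arctanh(x)\ge x$, not to positivity of a rational function. (Incidentally, your formula for $\varphi_s$ carries an extra factor $\sqrt{\tanh(-2\tau)}$; the correct value is $\varphi_s=\cos r/\sqrt{1-\sin^2 r\,\tanh^2(-2\tau)}$.) Second, you need not check all four derivative inequalities by hand: once $K^\top\ge K_2^\perp=K_1^\perp\ge L$ is in place and $(K^\top)_s\ge 0$ has been verified directly, the identities \eqref{1dcurv} give $L_s\ge 0$ and $(K_2^\perp)_s\ge 0$ for free (using $\varphi_s\ge 0$, $\psi_s\le 0$), and $(K_1^\perp)_s=(K_2^\perp)_s$ since $K_1^\perp\equiv K_2^\perp$ on the initial data.
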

\begin{proof}
We compute
\begin{align*}
\frac{ \psi_{\tau}'(r)}{\chi_{\tau}(r)}=\frac{\sin(r)}{\sin^2(r)\sinh^2(-2\tau)-\cosh^2(-2\tau)}, \qquad \frac{\varphi_{\tau}'(r)}{\chi_{\tau}(r)}=\frac{\psi_{\tau}(r)}{\sqrt{\tanh(-2\tau)}}.
\end{align*}
Therefore, 
\begin{align*}
K^\top&=-\frac{\psi_{ss}}{\psi}=-\frac{1}{\psi_{\tau}\chi_{\tau}}\left(\frac{\psi_{\tau}'}{\chi_{\tau}}\right)'\\
&=\frac{1+\sin^2 (r)\tanh^2(-2\tau)}{\left(\sinh(-2\tau)\cosh(-2\tau)\right)\left(1-\sin^2(r)\tanh^2(-2\tau)\right)}.
\end{align*}
In particular, $K^\top$ is positive and monotone increasing in $r$ (and therefore in $s$ as well).

Next, we observe that since $\frac{\varphi_{\tau}'}{\chi_{\tau}\psi_{\tau}}=\frac{(\varphi_{\tau})_s}{\psi_{\tau}}$ is constant in $r$ and $s$, we can conclude that $\frac{(\varphi_{\tau})_{ss}}{\varphi_{\tau}}=\frac{(\varphi_{\tau})_{s}(\psi_{\tau})_{s}}{\varphi_{\tau}\psi_{\tau}}$, so $K_{1}^{\perp}=K_2^{\perp}$ uniformly. 
Observe that 
\begin{align*}
K_2^{\perp}=\frac{\sin(r)}{\cosh^2(-2\tau)\arctanh(\tanh(-2\tau)\sin(r)))\left(1-\sin^2(r)\tanh^2(-2\tau)\right)},
\end{align*}
so that
\begin{align*}
&\sinh(-2\tau)\cosh(-2\tau)\left(1-\sin^2(r)\tanh^2(-2\tau)\right)(K^{\top}-K_2^{\perp})\\
&=1+\sin^2(r)\tanh^2(-2\tau)-\frac{\sin(r)\tanh(-2\tau)}{\arctanh(\tanh(-2\tau)\sin(r))}.
\end{align*}
The non-negativity of the function $1-\frac{x}{\arctanh(x)}$ implies that the right hand side is non-negative. 
Finally, 
\begin{align*}
\arctanh^2(\tanh(-2\tau)\sin(r))L=\tanh(-2\tau)-\frac{\cos^2(r)\tanh(-2\tau)}{1-\sin^2(r)\tanh^2(-2\tau)}
\end{align*}
so that $L$ is strictly positive on $[0,\frac{\pi}{2}]$ (in the sense of limits for $r=0,\frac{\pi}{2}$), and 
\begin{align*}
(1{}&-\sin^2(r)\tanh^2(-2\tau))(K_2^{\perp}-L)\arctanh(\tanh(-2\tau)\sin(r))\\
&=\frac{\sin(r)}{\cosh^2(-2\tau)}-\frac{\tanh(-2\tau)(1-\sin^2(r)\tanh^2(-2\tau))-\cos^2(r)\tanh(-2\tau)}{\arctanh(\tanh(-2\tau)\sin(r))}\\
&=\frac{\sin(r)\arctanh(\tanh(-2\tau)\sin(r))+\cosh(-2\tau)\sinh(-2\tau)\sin^2(r)(\tanh^2(-2\tau)-1)}{\arctanh(\tanh(-2\tau)\sin(r))\cosh^2(-2\tau)},
\end{align*}
which is also non-negative, due to the estimate $\arctanh(x)\ge x$ for $x\ge 0$. 

We have established $K^{\top}\ge K_2^{\perp}= K_1^{\perp}\ge L>0$ and $(K^{\top})_s\ge 0$. The other required derivative estimates follow from \eqref{1dcurv} in Appendix \ref{sec:derivs of curvature}. 
\end{proof}
Since $\underline g_{\tau}$ has positive curvature operator by Lemma \ref{lem_sec_gtau} and \ref{app_curvature}, the work of B\"ohm and Wilking \cite{BohmWilking} implies that the maximal Ricci flow starting at $\underline g_{\tau}$ continues to have positive curvature operator, and develops a singularity in finite time modelled on the round sphere. By shifting time so that the singularity occurs at $t=0$, we thereby obtain a $\tau$-indexed family of Ricci flows 
\[
\big(S^n, g_\tau(t)\big)_{t\in [\alpha_{\tau},0)}, \;\; g_\tau(\alpha_{\tau})= \underline g_{\tau},
\]
with  $\frac{1}{-2(n-1)t} \, g_\tau(t)$ smoothly converging to the round metric as $t\to 0$. 


\subsection{Sturmian properties of the sectional curvatures}\label{sec:curvature relations}
In this section, we establish two remarkable properties of   $O(2)\times O(n-1)$-invariant Ricci flows. 
 
\begin{prop}\label{lem:curvaturepreserved}
The curvature conditions \eqref{eq:Korder}  are preserved under the Ricci flow of $O(2)\times O(n-1)$-invariant metrics on ${S}^n$.
\end{prop}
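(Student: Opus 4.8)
The plan is to argue via the maximum principle, but applied carefully to differences of sectional curvatures rather than to the curvature operator itself (this is exactly the point where Hamilton's tensor maximum principle is unavailable, due to the unbounded coefficients $\psi_s/\psi$, $\varphi_s/\varphi$ appearing in the singular orbits). The strategy is to track the three quantities $u_1 := K^\top - K_2^\perp$, $u_2 := K_2^\perp - K_1^\perp$, and $u_3 := K_1^\perp - L$, all of which are nonnegative at the initial time $t = \alpha_\tau$ by Lemma~\ref{lem_sec_gtau}, and to show each stays nonnegative. Since these functions are smooth, $O(2)\times O(n-1)$-invariant functions on the closed manifold $S^n$ (the smoothness conditions of Proposition~\ref{prop:ch1mfsmoothness} guarantee the four curvatures extend smoothly across the tip and waist), their minima are attained, and the only subtlety is the behaviour of the reaction terms at a first zero.

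First I would differentiate the system \eqref{eq:evolve curvatures} to obtain evolution equations for $u_1, u_2, u_3$. The key structural feature to check is that each equation takes the form
\[
(\pd_t - \Delta) u_i = \sum_j a_{ij} u_j + (\text{terms}),
\]
where the off-diagonal coupling coefficients $a_{ij}$ ($j \ne i$) are nonnegative wherever the curvatures already satisfy the ordering \eqref{eq:Korder}, and — crucially — the remaining ``(terms)'' are manifestly nonnegative combinations of $u_1, u_2, u_3$ and the squared logarithmic-derivative factors $(\psi_s/\psi)^2$, $(\varphi_s/\varphi)^2$. For instance, subtracting \eqref{eq:evolve K2perp} from \eqref{eq:evolve Ktop} should yield $(\pd_t - \Delta) u_1$ with a reaction term that, after regrouping, is a sum of products of the $u_j$'s with nonnegative coefficients plus a term proportional to $(\varphi_s/\varphi)^2 u_1$ with a good sign; similarly for $u_2$ using \eqref{eq:evolve K2perp} minus \eqref{eq:evolve K1perp}, and for $u_3$ using \eqref{eq:evolve K1perp} minus \eqref{eq:evolve L}. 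Because we only need the sign of the reaction term on the set $\{u_1, u_2, u_3 \ge 0\}$, the quadratic curvature terms may be bounded using the ordering itself (e.g.\ $K^\top \ge K_2^\perp \ge K_1^\perp \ge L \ge 0$, which forces all four curvatures nonnegative and pinned below by $L$).

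Then I would run the following vector maximum principle argument. Suppose, for contradiction, that the ordering \eqref{eq:Korder} fails at some first time $t_0 \in (\alpha_\tau, 0)$; pick the component — say $u_{i_0}$ — that hits zero first, at a spatial point $p_0$. At $(p_0, t_0)$ we have $u_{i_0} = 0$, $\Delta u_{i_0} \ge 0$, $\pd_t u_{i_0} \le 0$, and $u_j \ge 0$ for all $j$. Plugging into the evolution equation for $u_{i_0}$, the right-hand side at $(p_0, t_0)$ is a sum of nonnegative terms (the diagonal term $a_{i_0 i_0} u_{i_0}$ vanishes; the off-diagonal terms $a_{i_0 j} u_j \ge 0$; the squared-gradient terms $\ge 0$), so $(\pd_t - \Delta) u_{i_0} \ge 0$ there, giving $\pd_t u_{i_0}(p_0,t_0) \ge \Delta u_{i_0}(p_0,t_0) \ge 0$, hence $= 0$. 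This is the borderline case of the maximum principle, and to upgrade it to a strict contradiction I would either (a) perturb, replacing $u_i$ by $u_i + \varepsilon e^{Ct}$ for a constant $C$ chosen larger than all the (locally bounded on the region where the argument takes place) coefficients $a_{ij}$ and rerunning the argument to get $(\pd_t - \Delta)(u_{i_0} + \varepsilon e^{Ct}) > 0$ at the touching point, a contradiction with it being a spatial minimum and temporal maximum, then let $\varepsilon \to 0$; or (b) invoke the strong maximum principle to conclude $u_{i_0} \equiv 0$ on $[\alpha_\tau, t_0]$, contradicting $u_{i_0} > 0$ at $t = \alpha_\tau$ (where Lemma~\ref{lem_sec_gtau} in fact gives strict inequalities away from the singular orbits, and at the singular orbits one checks the limiting values directly).

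The main obstacle — and the reason this does not follow from Hamilton's tensor maximum principle — is verifying the sign of the reaction terms, i.e.\ confirming that after differencing the evolution equations \eqref{eq:evolve curvatures} every term can be written as a nonnegative-coefficient combination of $u_1, u_2, u_3$ and the squared logarithmic derivatives on the region $\{u_1, u_2, u_3 \ge 0\}$. The dangerous terms are precisely those carrying the unbounded factors $(\psi_s/\psi)^2$ and $(\varphi_s/\varphi)^2$: one must check these always appear multiplied by one of the $u_j$'s (so they vanish at a first touching point, or contribute with the correct sign) rather than by a bare curvature. A secondary technical point is the behaviour at the tip and the waist: there $\psi \to 0$ or $\varphi \to 0$ and the quantities $\psi_s/\psi$, $\varphi_s/\varphi$ blow up, so one should either work away from the singular orbits and control the limit using the smoothness of the extended curvature functions, or — cleaner — verify directly that the ordering \eqref{eq:Korder} holds with room to spare at the singular orbits (using L'Hôpital-type expansions of the curvatures there), so that any first failure necessarily occurs on the principal domain where all coefficients are smooth and the above argument applies verbatim.
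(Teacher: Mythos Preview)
Your overall architecture is right --- track the three differences $u_1 = K^\top - K_2^\perp$, $u_2 = K_2^\perp - K_1^\perp$, $u_3 = K_1^\perp - L$, verify the off-diagonal coupling in the reaction system is nonnegative, and run a vector maximum principle with an $\varepsilon e^{Ct}$ perturbation. The paper does exactly this. But your handling of the singular orbits contains a genuine error that breaks the argument.

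You write that one should ``verify directly that the ordering \eqref{eq:Korder} holds with room to spare at the singular orbits \ldots\ so that any first failure necessarily occurs on the principal domain.'' This is false. The smoothness conditions of Proposition~\ref{prop:ch1mfsmoothness} force $K^\top = K_2^\perp$ and $K_1^\perp = L$ at the waist, and $K_1^\perp = K_2^\perp$ at the tip, \emph{for every invariant metric and at every time} (see the expansions \eqref{cawaist}--\eqref{catip}). So $u_1, u_3$ vanish identically along the waist and $u_2$ vanishes identically along the tip; the minimum of each $u_i$ is zero at all times, and there is never any ``room to spare.'' Consequently the first touching point for your perturbed quantity $u_i + \varepsilon e^{Ct}$ can occur arbitrarily close to a singular orbit, where the off-diagonal coefficients $(\psi_s/\psi)^2$ or $(\varphi_s/\varphi)^2$ blow up. You then cannot choose $C$ to dominate them, and the contradiction step fails.

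The paper's repair is to replace each $u_i$ by a weighted version $X = w(r)\,u_1$, $Y = w(\tfrac{\pi}{2}-r)\,u_2$, $Z = w(r)\,u_3$, with $w(r) = r^{-2}$ near the relevant endpoint. Because each $u_i$ vanishes to second order at its singular orbit, $X,Y,Z$ extend smoothly to $[0,\pi/2]$ with Neumann boundary conditions, and --- this is the crucial calculation --- the transformed reaction matrix $E_{ij}$ becomes \emph{bounded} with nonnegative off-diagonal entries (the $r^{-2}$ weight exactly cancels the blow-up of the log-derivative factors). Only then does the $\varepsilon e^{(C+1)t}$ perturbation go through. Your proposal is missing this weighting step, and without it the argument does not close.
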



\begin{proof}
Given any $O(2)\times O(n-1)$-invariant Ricci flow on ${S}^n$, defined on the time interval $[\alpha,0)$,
Hamilton \cite{MR862046} showed that the positivity of the curvature operator is preserved under Ricci flow, so that in particular the inequality $\min\{K^\top,K^\perp_1,K^\perp_2,L\}>0$ is preserved. By \eqref{eq:evolve curvatures} we have
\begin{subequations}\label{eq:evolve_differences}
\ba
\frac{\partial}{\partial t} (K^{\top}-K_2^{\perp})&=\Delta (K^{\top}-K_2^{\perp})-2(n-1)\left(\frac{\varphi_s}{\varphi}\right)^2(K^{\top}-K_2^{\perp})\\
&+2\left(\frac{\psi_s}{\psi}\right)^2 (K_2^{\perp}-K_1^{\perp})+2(K^{\top}-K_2^{\perp})(K^{\top}+K_2^{\perp}-K_1^{\perp}) \nonumber \\
&+2(n-3)K_2^{\perp}(K_1^{\perp}-L), \nonumber
\ea
\ba
\frac{\partial}{\partial t} (K_2^{\perp}-K_1^{\perp})&=\Delta (K_2^{\perp}-K_1^{\perp})-4\left(\frac{\psi_s}{\psi}\right)^2(K_2^{\perp}-K_1^{\perp}) \\
&+2\left(\frac{\varphi_s}{\varphi}\right)^2(K^{\top}-K_2^{\perp}) +2(n-3)\left(\frac{\varphi_s}{\varphi}\right)^2 (K_1^{\perp}-L) \nonumber \\
&+2(K_2^{\perp}-K_1^{\perp})(K_2^{\perp}+K_1^{\perp}-K^{\top}+(n-3)L), \nonumber
\ea
\ba
\frac{\partial}{\partial t}(K_1^{\perp}-L)&=\Delta (K_1^{\perp}-L)+2\left(\frac{\psi_s}{\psi}\right)^2(K_2^{\perp}-K_1^{\perp}) \\
&-2(n-1)\left(\frac{\varphi_s}{\varphi}\right)^2 (K_1^{\perp}-L) \nonumber \\
&+2K_2^{\perp}(K^{\top}-K_2^{\perp})+2(n-3)L(K_1^{\perp}-L).\nonumber
\ea
\end{subequations}
We work in the non-geometric, time-independent coordinate $r\in [0,\frac{\pi}{2}]$. The smoothness conditions on $\chi,\psi,\varphi$ (Proposition \ref{prop:ch1mfsmoothness}) imply that, close to $r=0$, 
\begin{align*}
\chi(r,t)&=\chi(0,t)+r^2 \chi_{\mathsf{waist}}(r,t),\\
 \psi(r,t)&=\psi(0,t)+r^2 \psi_{\mathsf{waist}}(r,t)\\
\varphi(r,t)&=r\chi(0,t)+r^3\varphi_{\mathsf{waist}}(r,t),
\end{align*}
where $\chi(0,t)$ and $\psi(0,t)$ are both positive, and the functions $\chi_{\mathsf{waist}}$, $\psi_{\mathsf{waist}}$, and $\varphi_{\mathsf{waist}}$ are all smooth and even. Similarly, close to $\frac{\pi}{2}$, we have 
\begin{align*}
\chi\left(\tfrac{\pi}{2}-r,t\right)&=\chi\left(\tfrac{\pi}{2},t\right)+r^2\chi_{\mathsf{tip}}(r,t),\\
\psi\left(\tfrac{\pi}{2}-r,t\right)&=r\chi\left(\tfrac{\pi}{2},t\right)+r^3\psi_{\mathsf{tip}}(r,t),\\
\varphi\left(\tfrac{\pi}{2}-r,t\right)&=\varphi\left(\tfrac{\pi}{2},t\right)+r^2 \varphi_{\mathsf{tip}}(r,t),
\end{align*}
where $\chi\left(\frac{\pi}{2},t\right), \varphi\left(\frac{\pi}{2},t\right)$ are positive, and $\chi_{\mathsf{tip}}$, $\psi_{\mathsf{tip}}$, and $\varphi_{\mathsf{tip}}$ are smooth and even with respect to $r=0$. As a result, from \eqref{eqn_seccurv} and $\partial_s = \chi^{-1} \partial_r$ we deduce that the curvatures near the waist satisfy 
\begin{align}\label{cawaist}
\begin{split}
K^{\top}(r,t)&=-\frac{2\psi_{\mathsf{waist}}(0,t)}{\chi(0,t)^2\psi(0,t)}+r^2 K^{\top}_{\mathsf{waist}}(r,t)\\
K_2^{\perp}(r,t)&=-\frac{2\psi_{\mathsf{waist}}(0,t)}{\chi(0,t)^2\psi(0,t)}+r^2 K_{2,{\mathsf{waist}}}^{\perp}(r,t),\\
K_1^{\perp}(r,t)&=-\frac{6\varphi_{\mathsf{waist}}(0,t)}{\chi^3(0,t)}+\frac{2\chi_{\mathsf{waist}}(0,t)}{\chi^3(0,t)}+r^2 K^{\perp}_{1,{\mathsf{waist}}}(r,t),\\
L(r,t)&=-\frac{6\varphi_{\mathsf{waist}}(0,t)}{\chi^3(0,t)}+\frac{2\chi_{\mathsf{waist}}(0,t)}{\chi^3(0,t)}+r^2 L_{\mathsf{waist}}(r,t),
\end{split}
\end{align}
where all of the ${\mathsf{waist}}$ functions are smooth and even in $r$. At the other end, we have 
\begin{align}\label{catip}
\begin{split}
K^{\top}\left(\tfrac{\pi}{2}-r\right)&=-\frac{6\psi_{\mathsf{tip}}(0,t)}{\chi^3\left(\frac{\pi}{2},t\right)}+\frac{2\chi_{\mathsf{tip}}(0,t)}{\chi^3\left(\frac{\pi}{2},t\right)}+r^2 K^{\top}_{\mathsf{tip}}(r,t),\\
K_2^{\perp}\left(\tfrac{\pi}{2}-r\right)&=-\frac{2\varphi_{\mathsf{tip}}(0,t)}{\chi\left(\frac{\pi}{2},t\right)^2\varphi(0,t)}+r^2 K_{2,{\mathsf{tip}}}^{\perp}(r,t),\\
K_1^{\perp}\left(\tfrac{\pi}{2}-r\right)&=-\frac{2\varphi_{\mathsf{tip}}(0,t)}{\chi\left(\frac{\pi}{2},t\right)^2\varphi(0,t)}+r^2 K_{1,{\mathsf{tip}}}^{\perp}(r,t),\\
L\left(\tfrac{\pi}{2}-r\right)&=\frac{1}{\varphi\left(\frac{\pi}{2},t\right)^2}+r^2 L_{\mathsf{tip}}(r,t),
\end{split}
\end{align}
and all of the ${\mathsf{tip}}$ functions are smooth and even around $r=0$. 

Define now a weight function $w:(0,\frac{\pi}{2})\to \mathbb{R}^+$ to be a smooth time-independent function  so that $w(r)=\frac{1}{r^2}$ for $r<\frac{\pi}{6}$, and $w(r)=1$ for $r>\frac{\pi}{3}$. Consider the weighted quantities
\begin{align}\label{weightedquantities}
\begin{split}
X(r,t)&=w(r)(K^{\top}(r,t)-K_2^{\perp}(r,t)),\\
Y(r,t)&=w(\tfrac{\pi}{2}-r)(K_2^{\perp}(r,t)-K_1^{\perp}(r,t)),\\
 Z(r,t)&=w(r)(K_1^{\perp}(r,t)-L(r,t)).
 \end{split}
\end{align}
The functions $X,Y,Z$ of \eqref{weightedquantities} are smooth on $[0,\frac{\pi}{2}]$, and satisfy Neumann conditions at the boundary by \eqref{cawaist} and \eqref{catip}.  We now write evolution equations for these quantities. We have $\partial_s=\frac{\partial_r}{\chi}$, so 
\begin{align*}
\Delta u= u_{ss}+\left(\frac{\psi_s}{\psi}+(n-2)\frac{\varphi_s}{\varphi}\right)u_s=\frac{u_{rr}}{\chi^2}+\left(\frac{\psi_r}{\psi\chi^2}+(n-2)\frac{\varphi_r}{\varphi \chi^2}-\frac{\chi_r}{\chi^3}\right)u_r,
\end{align*} 
implying
\begin{align*}
\Delta \left(\frac{u}{r^2}\right)&=\frac{1}{\chi^2}\left(\frac{u}{r^2}\right)_{rr}+\left(\frac{\psi_r}{\psi\chi^2}+(n-2)\frac{\varphi_r}{\varphi \chi^2}-\frac{\chi_r}{\chi^3}\right)\left(\frac{u}{r}\right)_r\\
&=\frac{1}{\chi^2}\left(\frac{u_{rr}}{r^2}-\frac{4 u_r}{r^3}+\frac{6 u}{r^4}\right)+\left(\frac{\psi_r}{\psi\chi^2}+(n-2)\frac{\varphi_r}{\varphi \chi^2}-\frac{\chi_r}{\chi^3}\right)\left(\frac{u_r}{r^2}-\frac{2u}{r^3}\right)\\
&=\frac{\Delta u}{r^2}-\frac{4u_r}{\chi^2r^3}+u\left(\frac{6}{\chi^2 r^4}-\frac{2}{r^3}\left(\frac{\psi_r}{\psi\chi^2}+(n-2)\frac{\varphi_r}{\varphi \chi^2}-\frac{\chi_r}{\chi^3}\right)\right)\\
&=\frac{\Delta u}{r^2}-\frac{4}{\chi^2r}\left(\frac{u}{r^2}\right)_r-\frac{u}{r^2}\left(\frac{2}{\chi^2 r^2}+\frac{2}{r}\left(\frac{\psi_r}{\psi\chi^2}+(n-2)\frac{\varphi_r}{\varphi \chi^2}-\frac{\chi_r}{\chi^3}\right)\right).
\end{align*}
We use this, together with \eqref{eq:evolve_differences}, to compute 
\begin{align*}
\frac{\partial X}{\partial t}&=\frac{X_{rr}}{\chi^2}+D_1 X_r+E_{11} X+E_{12}Y+E_{13}Z,\\
\frac{\partial Y}{\partial t}&=\frac{Y_{rr}}{\chi^2}+D_2 Y_r+E_{21}X+E_{22}Y+E_{23} Z,\\
\frac{\partial Z}{\partial t}&=\frac{Z_{rr}}{\chi^2}+D_3 Z_{r}+E_{31} X+E_{32}Y+E_{33}Z,
\end{align*}
for $r\in [0,\frac{\pi}{2}]$. 
For $r\in (0,\frac{\pi}{6})$, we get:
\begin{itemize}
\item $D_1=D_3=D_2+\frac{4}{\chi^2 r}=\frac{4}{\chi^2r}+\left(\frac{\psi_r}{\psi\chi^2}+(n-2)\frac{\varphi_r}{\varphi \chi^2}-\frac{\chi_r}{\chi^3}\right)$;
\item $E_{11}=\frac{2}{\chi^2 r^2}+\frac{2}{r}\left(\frac{\psi_r}{\psi\chi^2}+(n-2)\frac{\varphi_r}{\varphi \chi^2}-\frac{\chi_r}{\chi^3}\right)-2(n-1)\left(\frac{\varphi_r}{\chi \varphi}\right)^2+2(K^{\top}+K_2^{\perp}-K_1^{\perp})$;
\item $E_{12}=\frac{2}{r^2}\left(\frac{\psi_r}{\chi\psi}\right)^2$;
\item $E_{13}=2(n-3)K_2^{\perp}$;
\item $E_{21}=2r^2 \left(\frac{\varphi_r}{\chi\varphi}\right)^2$;
\item $E_{22}=-4\left(\frac{\psi_r}{\chi\psi}\right)^2+2(K_2^{\perp}+K_1^{\perp}-K^{\top}+(n-3)L)$;
\item $E_{23}=2(n-3)r^2\left(\frac{\varphi_r}{\chi\varphi}\right)^2$;
\item $E_{31}=2K_2^{\perp}$;
\item $E_{32}=\frac{2}{r^2}\left(\frac{\psi_r}{\chi\psi}\right)^2$;
\item $E_{33}=\frac{2}{\chi^2r^2}+\frac{2}{r}\left(\frac{\psi_r}{\psi\chi^2}+(n-2)\frac{\varphi_r}{\varphi \chi^2}-\frac{\chi_r}{\chi^3}\right)-2(n-1)\left(\frac{\varphi_r}{\chi\varphi}\right)^2+2(n-3)L$. 
\end{itemize}
The key information we need from this list is that for any cutoff time $\sigma\in (\alpha,0)$, the coefficients satisfy the following for $r\in [0,\frac{\pi}{6}]$ and $t\in [\alpha,\sigma]$:
\begin{itemize}
\item $D_1-\frac{n+2}{\chi^2 r}=D_3-\frac{n+2}{\chi^2 r} = D_2-\frac{n-2}{\chi^2 r}$ is a smooth and odd function around $r=0$;
\item $E_{ij}$ are bounded above for all $i,j = 1, \ldots, 3$, and are non-negative when $i\neq j$.
\end{itemize}
The computation for the tip is similar, except we first perform the change $r\mapsto r-\frac{\pi}{2}$ to simplify the outputs: 
\begin{itemize}
\item $D_1=D_3=D_2-\frac{4}{\chi^2r}=\frac{\psi_r}{\psi\chi^2}+(n-2)\frac{\varphi_r}{\varphi \chi^2}-\frac{\chi_r}{\chi^3}$;
\item $E_{11}=-2(n-1)\left(\frac{\varphi_r}{\chi \varphi}\right)^2+2(K^{\top}+K_2^{\perp}-K_1^{\perp})$;
\item $E_{12}=2r^2\left(\frac{\psi_r}{\chi\psi}\right)^2$;
\item $E_{13}=2(n-3)K_2^{\perp}$;
\item $E_{21}=\frac{2}{r^2}\left(\frac{\varphi_r}{\chi \varphi}\right)^2$;
\item $E_{22}=\frac{2}{\chi^2 r^2}+\frac{2}{r}\left(\frac{\psi_r}{\psi\chi^2}+(n-2)\frac{\varphi_r}{\varphi \chi^2}-\frac{\chi_r}{\chi^3}\right)-4\left(\frac{\psi_r}{\psi \chi}\right)^2+2(K_2^{\perp}+K_1^{\perp}-K^{\top}+(n-3)L)$;
\item $E_{23}=\frac{2}{r^2}(n-3)\left(\frac{\varphi_r}{\chi\varphi}\right)^2$;
\item $E_{31}=2K_2^{\perp}$;
\item $E_{32}=2r^2 \left(\frac{\psi_r}{\chi\psi}\right)^2$;
\item $E_{33}=-2(n-1)\left(\frac{\varphi_r}{\chi\varphi}\right)^2+2(n-3)L$. 
\end{itemize}
As for the waist region, here the coefficients $E_{ij}$ are all bounded above, non-negative if $i\neq j$, and $D_1 - \tfrac{1}{\chi^2 r} = D_3 - \tfrac{1}{\chi^2 r} = D_2 - \tfrac{5}{\chi^2 r}$ is smooth and odd around $r=0$ (corresponding to the tip  due to our change of variables).

Now, for each $\varepsilon>0$ and $\sigma<0$, define 
\begin{align*}
u_{\varepsilon,\sigma}=\min\{X,Y,Z\}+\varepsilon e^{(C_{}+1)(t-\alpha)}, \qquad 
C := 3 \max_{[0,\pi/6]\times [\alpha,\sigma]}\{E_{ij}(r,t) : 1\leq i,j \leq 3\}.
\end{align*}
We claim that $u_{\varepsilon,\sigma}(r,t)\ge 0$ holds for all  $(r,t) \in [0,\pi/2]\times [\alpha,\sigma]$, provided that $\min\{X,Y,Z\} \geq 0$ holds at $t= \alpha$. Since $\varepsilon >0$ is arbitrary, the proposition will follow from this. The assumption implies that $u_{\varepsilon, \sigma}(r,\alpha) > 0$ for all $r$. Arguing by contradiction, we let $t_0>\alpha$ be the first time where $u_{\varepsilon,\sigma}(x_0,t_0)=0$ for some $x_0\in [0,\frac{\pi}{2}]$. At least one of the following  holds: 
\begin{align*}
u(x_0,t_0)&=X(x_0,t_0)+\varepsilon e^{(C+1)(t_0-\alpha)}=0 \ \text{and} \ \min\{Y(x_0,t_0),Z(x_0,t_0)\}\ge X(x_0,t_0);\\
u(x_0,t_0)&=Y(x_0,t_0)+\varepsilon e^{(C+1)(t_0-\alpha)}=0 \ \text{and} \ \min\{X(x_0,t_0),Z(x_0,t_0)\}\ge Y(x_0,t_0); \ \text{or} \\
u(x_0,t_0)&=Z(x_0,t_0)+\varepsilon e^{(C+1)(t_0-\alpha)}=0 \ \text{and} \ \min\{X(x_0,t_0),Y(x_0,t_0)\}\ge Z(x_0,t_0).
\end{align*}
Suppose we are in the first case, so that $\tilde{X}(x,t):=X(x,t)+\varepsilon e^{(C+1)(t-\alpha)}\ge 0$ for all $t\le t_0$ and $x\in [0,\frac{\pi}{2}]$, but $\tilde{X}(x_0,t_0)=0$. Then, since $X$ is smooth on $[0,\frac{\pi}{2}]$ with Neumann boundary conditions, we have 
\begin{align}\label{mpoffense}
\tilde{X}_t(x_0,t_0)\le 0, \qquad \tilde{X}_r(x_0,t_0)=0, \qquad \tilde{X}_{rr}(x_0,t_0)\ge 0.
\end{align} Consider now the quantities 
\begin{align*}
    Q(r,t)& =X_t(r,t)-\frac{X_{rr}(r,t)}{\chi^2(r,t)} -D_1(r,t) X_r(r,t), \\
    \tilde{Q}(r,t)&=\tilde{X}_t(r,t)-\frac{\tilde{X}_{rr}(r,t)}{\chi^2(r,t)}-D_1(r,t) \tilde{X}_r(r,t). 
\end{align*}
The fact that $X$ is smooth on the boundary with Neumann conditions,  combined with the $D_1$ estimates at said boundary, imply that $Q$ and $\tilde{Q}$ are continuously extendable to the boundary. We furthermore claim that $\tilde{Q}(x_0,t_0)\le 0$. Indeed, if $x_0\in (0,\frac{\pi}{2})$, then $D_1$ is smooth at $x_0$ and the claim follows immediately from  \eqref{mpoffense}. On the other hand, if $x_0 = 0$,  \eqref{mpoffense} still holds, but we must study the sign of the first order term $D_1 \tilde X$. The $D_1$ expression implies that there is a function $ D_{1,\mathsf{waist}}$ which is smooth at $r=0$, and such that 
\[
    D_1 = D_{1,\mathsf{waist}} + \tfrac{n+2}{\chi^2 r}
\]
for $r\in (0, \pi/6)$. L'H\^opital's rule and \eqref{mpoffense} then yield
\[
    \lim_{r\to 0^+} D_1 \tilde X_r = \lim_{r\to 0^+} D_{1,\mathsf{waist}} X_r + \tfrac{n+2}{\chi^2} \tfrac{X_r}{r} = \tfrac{n+2}{\chi^2(0,t_0)} \, \tilde X_{rr}(0,t_0) \geq 0.
\]
The case $x_0 = \pi/2$ is analogous.

Therefore, 
\begin{align*}
0&\ge \tilde{Q}(x_0,t_0)\\
&=(C+1)\varepsilon e^{(C+1)(t_0-\alpha)}+Q(x_0,t_0)\\
&=(C+1)\varepsilon e^{(C+1)(t_0-\alpha)}+\lim_{r\to x_0}\left(\frac{\partial X}{\partial t}(r,t_0)-X_{rr}(r,t_0)-D_0(r,t_0) X_r(r,t_0)\right)\\
&= (C+1)\varepsilon e^{(C+1)(t_0-\alpha)}+D_1(x_0,t_0)X(x_0,t_0)+D_2(x_0,t_0)Y(x_0,t_0)+D_3(x_0,t_0) Z(x_0,t_0)\\
&\ge (C+1)\varepsilon e^{(C+1)(t_0-\alpha)}+(D_1(x_0,t_0)+D_2(x_0,t_0)+D_3(x_0,t_0))X(x_0,t_0)\\
&= (C+1)\varepsilon e^{(C+1)(t_0-\alpha)}-(D_1(x_0,t_0)+D_2(x_0,t_0)+D_3(x_0,t_0))\varepsilon e^{(C+1)(t_0-\alpha)}>0,
\end{align*}
contradiction. The computations for the second and third cases are very similar.
\end{proof}

In fact, it turns out that all of \eqref{initialcurvature} is preserved by the Ricci flow.
\begin{prop}\label{prop:curvaturepreserved}
The curvature conditions \eqref{eq:Ksorder} are preserved under Ricci flow of $O(2)\times O(n-1)$-invariant metrics on ${S}^n$.
\end{prop}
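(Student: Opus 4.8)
The plan is to run the scheme of the proof of Proposition~\ref{lem:curvaturepreserved}, after first reducing the number of inequalities that actually require a parabolic argument. Assuming, as we may, that the whole of \eqref{initialcurvature} holds at the initial time, Proposition~\ref{lem:curvaturepreserved} guarantees that \eqref{eq:Korder} is preserved; in particular the curvature operator stays positive, so $K^\top = -\psi_{ss}/\psi$ and $K_1^\perp = -\varphi_{ss}/\varphi$ remain non-negative and, since $\psi,\varphi>0$ on $(0,\pi/2)$, the functions $\psi$ and $\varphi$ are concave in the arclength $s$ at every time. Together with the smoothness conditions of Proposition~\ref{prop:ch1mfsmoothness} --- which give $\psi_s=0$ at the waist, $\varphi_s=1$ at the waist and $\varphi_s=0$ at the tip --- concavity forces $\psi_s\le 0\le\varphi_s$ throughout. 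Writing $a=\psi_s/\psi\le 0$ and $b=\varphi_s/\varphi\ge 0$ and differentiating the defining formulas of $K_2^\perp$ and $L$ along $\gamma$ gives the pointwise identities
\[
  (K_2^\perp)_s = b\,(K^\top - K_2^\perp)\;-\;a\,(K_2^\perp - K_1^\perp),\qquad L_s = 2b\,(K_1^\perp-L),
\]
both of whose right-hand sides are non-negative by \eqref{eq:Korder} and the sign conditions on $a,b$. Hence $(K_2^\perp)_s\ge 0$ and $L_s\ge 0$ automatically, and it remains only to show that $K_s^\top\ge 0$ and $(K_1^\perp)_s\ge 0$ are preserved.

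For these two I would use the evolution equations \eqref{eq:evolve Ktop grad} and \eqref{eq:evolve K2perp grad}. Substituting the above expression for $(K_2^\perp)_s$, and observing that the $L_s$-term in \eqref{eq:evolve K2perp grad} has a non-negative coefficient, one sees that the pair $(K_s^\top,(K_1^\perp)_s)$ solves a weakly coupled, cooperative parabolic system: the coefficients multiplying $K_s^\top$ and $(K_1^\perp)_s$ off the diagonal (such as $2(n-2)K_2^\perp$ and $2K_2^\perp$) are non-negative and bounded above by \eqref{eq:Korder} and positivity; the diagonal reaction coefficients (such as $4K^\top-(\psi_s/\psi)^2-3(n-2)(\varphi_s/\varphi)^2$) are bounded above on $[\alpha,\sigma]$ for each $\sigma<0$; and all remaining terms --- those proportional neither to $K_s^\top$ nor to $(K_1^\perp)_s$ --- are non-negative, again using $a\le 0\le b$, \eqref{eq:Korder} and positivity. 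One then passes to the time-independent coordinate $r\in[0,\tfrac{\pi}{2}]$, fixes a smooth positive time-independent weight $\omega$ with $\omega(r)=r^{-1}$ for $r<\tfrac{\pi}{6}$ and $\omega(r)=(\tfrac{\pi}{2}-r)^{-1}$ for $r>\tfrac{\pi}{3}$, and sets $P_1:=\omega K_s^\top$ and $P_2:=\omega(K_1^\perp)_s$. Since $K^\top$ and $K_1^\perp$ are even at both endpoints (cf.~\eqref{cawaist}--\eqref{catip}), $K_s^\top=\chi^{-1}\partial_r K^\top$ and $(K_1^\perp)_s$ vanish to first order at $r=0,\tfrac{\pi}{2}$, so $P_1$ and $P_2$ extend to smooth functions on $[0,\tfrac{\pi}{2}]$ that are even --- hence satisfy Neumann conditions --- at the endpoints. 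After computing the system satisfied by $(P_1,P_2)$ and checking, as in Proposition~\ref{lem:curvaturepreserved}, that its first-order coefficients are of the form $c_i/(\chi^2 r)$ with $c_i>0$ plus a smooth odd function near $r=0$ (and similarly near $\tfrac{\pi}{2}$), that the diagonal reaction stays bounded above and the off-diagonal coupling and remaining sources stay non-negative, the maximum-principle argument of Proposition~\ref{lem:curvaturepreserved} goes through: for $\varepsilon>0$ and $\sigma<0$ one considers $u_{\varepsilon,\sigma}=\min\{P_1,P_2\}+\varepsilon e^{(C+1)(t-\alpha)}$ with $C$ the supremum of the relevant coefficients over $[0,\tfrac{\pi}{2}]\times[\alpha,\sigma]$, argues by contradiction at the first touching time using L'H\^opital's rule to handle the singular first-order terms at the endpoints, and lets $\varepsilon\to 0$. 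The needed positivity at $t=\alpha$ is Lemma~\ref{lem_sec_gtau}.

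The main obstacle, exactly as in Proposition~\ref{lem:curvaturepreserved}, is the boundary analysis: one must verify that the weighted gradients $\omega K_s^\top$ and $\omega(K_1^\perp)_s$ genuinely extend smoothly and evenly across the singular orbits, and --- more delicately --- control the interaction of the $r^{-1}$- and $r^{-2}$-order singular contributions arising from the weight, from the tensor Laplacian, and from the $(\varphi_s/\varphi)^2$- and $(\psi_s/\psi)^2$-type factors in \eqref{eq:evolve curvature gradients}, so that the weighted system really does have bounded-above diagonal reaction, non-negative off-diagonal coupling, non-negative sources, and first-order coefficients of precisely the $c_i/(\chi^2 r)$-plus-smooth-odd shape required for the L'H\^opital step. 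The algebraic reduction of $(K_2^\perp)_s$ and $L_s$ performed above is what makes this tractable, since it removes from the PDE system the couplings to those two curvature gradients, whose natural weighted versions interact badly with the $r^{-1}$ weight near the tip and waist.
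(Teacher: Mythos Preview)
Your approach is essentially the paper's: first observe that \eqref{eq:Korder} together with $\psi_s\le 0\le\varphi_s$ and the identities \eqref{1dcurv} force $(K_2^\perp)_s\ge 0$ and $L_s\ge 0$ pointwise, then run a maximum-principle argument on the weakly-coupled cooperative system for the pair $(K_s^\top,(K_1^\perp)_s)$ using \eqref{eq:evolve curvature gradients}. The identification of the sign structure (non-negative off-diagonal coupling, bounded-above diagonal reaction, non-negative source terms) is exactly right.

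Where you diverge from the paper is in the boundary treatment, and here you are working much harder than necessary. You propose introducing a weight $\omega\sim r^{-1}$ and repeating the full machinery of Proposition~\ref{lem:curvaturepreserved}, worrying (correctly) about the delicate interaction of the singular first-order coefficients with the weight. The paper's proof sidesteps this entirely: since $K^\top$ and $K_1^\perp$ are smooth and even at both singular orbits (cf.~\eqref{cawaist}--\eqref{catip}), their $s$-derivatives $K_s^\top$ and $(K_1^\perp)_s$ vanish at $r=0$ and $r=\tfrac{\pi}{2}$. Hence the unweighted quantity
\[
  u_{\varepsilon,\sigma}=\min\{K_s^\top,(K_1^\perp)_s\}+\varepsilon e^{(C+1)(t-\alpha)}
\]
is strictly positive at both endpoints for every $t$, so any first zero of $u_{\varepsilon,\sigma}$ must occur at an interior point $s_0$, where all coefficients are smooth and the standard first-and-second-derivative test applies without any L'H\^opital step. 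This observation removes precisely the ``main obstacle'' you identify; no weight is needed at all.
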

\begin{proof}
Choose any $O(2)\times O(n-1)$-invariant Ricci flow on ${S}^n$, defined on the time interval $[\alpha,0)$. By Lemma \ref{lem:curvaturepreserved}, we know that 
\begin{align}\label{curvatureordering}
    K^{\top}\ge K_2^{\perp}\ge K_1^{\perp}\ge L>0
\end{align} is preserved, so it suffices to verify the curvature monotonicity conditions 
\begin{align}\label{curvaturemonotonicity}
    K^{\top}_s,(K_1^{\perp})_s,(K_2^{\perp})_s,L_s\ge 0
\end{align} are preserved, under the assumption that our Ricci flow satisfies \eqref{curvatureordering}. Observe that \eqref{1dcurv} implies immediately that $(K_2^{\perp})_s\ge 0$ and $L_s\ge 0$. The other inequalities then follow from the maximum principle, much as for the curvature differences, applied to \eqref{eq:evolve curvature gradients}. Indeed, we can write these evolution equations as 
\begin{align*}
    (\partial_t-\Delta)K_s^{\top}=A_1K_s^{\top}+A_2(K_1^{\perp})_s+A_3\\
    (\partial_t-\Delta)(K_1^{\perp})_s=B_1K_s^{\top}+B_2(K_1^{\perp})_s+B_3\,,
\end{align*}
where $A_3$ and $B_3$ are non-negative, $A_1$ and $B_2$ are bounded from above, and $A_2$ and $B_1$ are non-negative and bounded from above. Consider, for any $\varepsilon>0$ and $\sigma\in(\alpha,0)$, the function
\begin{align*}
    u_{\varepsilon,\sigma}(s,t)=\min\{K_s^{\top}(s,t),(K_1^{\perp})_s(s,t)\}+\varepsilon e^{(C+1)(t-\alpha)}\,,
\end{align*}  
where $C_{\sigma}$ is twice the supremum of $\max\{A_1,A_2,B_1,B_2\}$ on the time interval $[\alpha,\sigma]$. We claim that $u_{\varepsilon,\sigma}$ is non-negative in $[\alpha,\sigma]$. Indeed, since the sectional curvatures are smooth functions satisfying Neumann conditions at the boundary, it follows that, $u_{\varepsilon,\sigma}>0$ at the singular orbits, so if this quantity even became zero, say at a point $(t_0,s_0)$, then $t_0>\alpha$ (since $K_s^{\top}$ and $(K_1^{\perp})_s$ are both non-negative initially), and for $s_0$ corresponding to a principal orbit. Then if $ (K_1^{\perp})_s(s_0,t_0)\ge K_s^{\top}(s_0,t_0)=-\varepsilon e^{(C+1)(t-\alpha)}$, then 
\begin{align*}
    0&\ge (\partial_t-\Delta)\left(\varepsilon e^{(C+1)(t-\alpha)}+K_s^{\top}\right)\vert_{(s_0,t_0)}\\
    &= (C+1)\varepsilon e^{(C+1)(t_0-\alpha)}+A_1(s_0,t_0)K_s^{\top}(s_0,t_0)+A_2(s_0,t_0)(K_1^{\perp})_s(s_0,t_0)+A_3(s_0,t_0)\\
    &\ge (C+1)\varepsilon e^{(C+1)(t_0-\alpha)}+(A_1(s_0,t_0)+A_2(s_0,t_0))K_s^{\top}(s_0,t_0)\\
    &= (C+1-A_1(s_0,t_0)-A_2(s_0,t_0))\varepsilon e^{(C+1)(t_0-\alpha)}>0,
\end{align*}
which is absurd. The case that $K_s^{\top}(s_0,t_0)\ge (K_1^{\perp})_s(s_0,t_0)=-\varepsilon e^{(C+1)(t-\alpha)}$ is treated identically. 
\end{proof}


Thus, our Ricci flows $(S^n,g_\tau(t))_{t\in [\alpha_{\tau},0)}$ satisfy \eqref{initialcurvature} at all times.

\subsection{Crude estimates for key geometric quantities}

Define, along the Ricci flows $\big( S^n, g_\tau(t)\big)_{t\in [\alpha_{\tau},0)}$,
\begin{align}\label{importantquantities}
\begin{aligned}
& \ell_\tau(t) : = \int_{0}^{\frac{\pi}{2}}\chi_\tau(r,t)dr, 
&h_\tau(t) := \psi_\tau(0,t) = \max_{r\in [0,\pi/2]} \psi_\tau(r,t),\\
& A_\tau(t) 
:= 4\pi \int_0^{\ell_\tau(t)}\!\!\!\psi(s,t) ds,
  &d_\tau(t) := \varphi_\tau \big({\pi}/{2},t \big) = \max_{r\in [0,\pi/2]} \varphi_\tau(r, t).
 \end{aligned}
\end{align}
Note that $A_\tau(t)= \area(\Sigma^2, g_\tau^\top(t))$ is the area of the totally geodesic $2$-sphere $\Sigma \subset {S}^{n}$ with respect to the metric $g^\top_\tau(t)$ induced by $g_\tau(t)$, $h_\tau(t)$  is the ``radius'' of the ${S}^1$ singular orbit, $d_\tau(t)$ is the ``radius'' of the ${S}^{n-2}$ singular orbit, and $\ell_\tau(t)$ is the length of a minimising geodesic connecting these two singular orbits. 

We shall derive a series of crude estimates for these quantities.
\begin{lem}\label{areaexistencetime}
Along the Ricci flows $\big(S^n, g_\tau(t)\big)_{t\in [\alpha_{\tau},0)}$ we have 
\begin{align*}
-8\pi t\le A_\tau(t)\le -8\pi(n-1) t  \quad \ \text{and} \quad \frac{-\tau}{n-1}\le -\alpha_{\tau}\le-\tau. 
\end{align*}
\end{lem}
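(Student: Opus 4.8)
The plan is to derive the claimed bounds from a differential inequality for $A_\tau(t)$ together with a monotonicity/comparison argument for $\alpha_\tau$. First I would compute $\frac{d}{dt} A_\tau(t)$ directly. Since $A_\tau(t) = 4\pi\int_0^{\ell_\tau(t)}\psi(s,t)\,ds = 4\pi\int_0^{\pi/2}\psi_\tau(r,t)\chi_\tau(r,t)\,dr$, differentiating under the integral sign and using the Ricci flow equations \eqref{eq:doubly warped Ricci flow chi}--\eqref{eq:doubly warped Ricci flow psi} (in the fixed coordinate $r$, so that the commutator term is accounted for), one finds
\[
\frac{d}{dt}A_\tau(t) = 4\pi\int_0^{\pi/2}\big(\psi_t\chi + \psi\chi_t\big)\,dr = 4\pi\int_0^{\ell_\tau(t)}\Big(\psi_{ss} + (n-2)\tfrac{\psi_s\varphi_s}{\varphi} + \psi\big(\tfrac{\psi_{ss}}{\psi} + (n-2)\tfrac{\varphi_{ss}}{\varphi}\big)\Big)\,ds.
\]
The term $\int \psi_{ss}\,ds$ and the combination $\int\big(\psi_s\varphi_s/\varphi + \psi\varphi_{ss}/\varphi\big)\,ds = \int (\psi\varphi_s/\varphi)_s\,ds$ integrate to boundary terms; using the smoothness conditions of Proposition \ref{prop:ch1mfsmoothness} (namely $\psi_s = 0$ at the waist, $\psi = 0$ at the tip, $\varphi = 0$ at the waist with $\varphi_s = 1$ there, and $\varphi_s = -1$ at the tip) these boundary contributions evaluate cleanly. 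One is left with $\frac{d}{dt}A_\tau(t) = -4\pi\int_0^{\ell_\tau(t)}\psi_{ss}/\psi \cdot \psi\,ds + (\text{boundary}) = 4\pi\int K^\top\psi\,ds + \dots$, i.e. a Gauss--Bonnet-type identity. In fact the expected outcome is $\frac{d}{dt}A_\tau(t) = -\int_{\Sigma}K^\top\,dA_{g^\top} \cdot(\text{const})$ type expression; since $(S^n,\underline g_\tau)$ has positive curvature operator (hence $K^\top > 0$, preserved by Proposition \ref{lem:curvaturepreserved}), and since the extra terms coming from the $(n-2)$ normal directions are controlled by $K_1^\perp, K_2^\perp \in (0, K^\top]$, one gets a two-sided bound $-8\pi(n-1) \le \frac{d}{dt}A_\tau(t) \le -8\pi$, using Gauss--Bonnet $\int_{\Sigma}K^\top\,dA_{g^\top} = 4\pi$ and the curvature ordering \eqref{eq:Korder}. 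Integrating from $t$ to $0$ and using $A_\tau(0) = 0$ (the solution shrinks to a round point, so the section degenerates) gives $-8\pi t \le A_\tau(t) \le -8\pi(n-1)t$.

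For the bounds on $\alpha_\tau$, the key observation is that $A_\tau(\alpha_\tau) = \area(\Sigma^2, g_{\mathsf{sausage}}(\tau))$, the area of the $\tau$-time-slice of the ancient sausage solution on $S^2$, since by construction $\chi_\tau^2\,dr^2 + \psi_\tau^2\,d\theta^2 = g_{\mathsf{sausage}}(\tau)$. The sausage solution on $S^2$ is itself a Ricci flow, and $\frac{d}{dt}\area(S^2,g_{\mathsf{sausage}}(t)) = -\int_{S^2}K\,dA = -8\pi$ by Gauss--Bonnet, so with the convention that the sausage also becomes extinct at time $0$ we get $\area(S^2,g_{\mathsf{sausage}}(\tau)) = -8\pi\tau$. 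Therefore $A_\tau(\alpha_\tau) = -8\pi\tau$. Combining this with the already-established inequality $-8\pi\alpha_\tau \le A_\tau(\alpha_\tau) \le -8\pi(n-1)\alpha_\tau$ yields $-8\pi\alpha_\tau \le -8\pi\tau$ and $-8\pi\tau \le -8\pi(n-1)\alpha_\tau$, i.e. $\frac{-\tau}{n-1} \le -\alpha_\tau \le -\tau$, as claimed.

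The main obstacle I anticipate is the careful bookkeeping of boundary terms in the computation of $\frac{d}{dt}A_\tau(t)$: one must differentiate the area in the non-geometric coordinate $r$ (where the domain $[0,\pi/2]$ is fixed) rather than the arclength $s$ (whose endpoint $\ell_\tau(t)$ moves), correctly apply the evolution equations \eqref{eq:doubly warped Ricci flow}, and then use precisely the right smoothness conditions from Proposition \ref{prop:ch1mfsmoothness} at each of the waist ($r=0$) and tip ($r=\pi/2$) to show the boundary contributions reduce to the Euler-characteristic constant. A secondary subtlety is justifying $A_\tau(0^-) = 0$ and $\area(S^2,g_{\mathsf{sausage}}(0^-)) = 0$: this follows from the fact that $\frac{1}{-2(n-1)t}g_\tau(t)$ converges smoothly to the round metric as $t \to 0$ (stated in the excerpt), so $A_\tau(t) = O(-t) \to 0$, and similarly for the sausage; one should also confirm the integrands in the area derivative are integrable up to the singular orbits, which again is exactly what the smoothness conditions guarantee. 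Once these points are handled, the inequalities for both $A_\tau$ and $\alpha_\tau$ drop out by elementary integration and comparison.
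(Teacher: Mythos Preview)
Your approach is essentially the paper's: compute $A_\tau'(t)$, bound it between $-8\pi(n-1)$ and $-8\pi$ using Gauss--Bonnet together with the curvature ordering \eqref{eq:Korder}, integrate using $A_\tau(0^-)=0$, and then read off the bounds on $\alpha_\tau$ from $A_\tau(\alpha_\tau)=-8\pi\tau$.

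One correction to your plan: the identity $\int\big(\psi_s\varphi_s/\varphi + \psi\varphi_{ss}/\varphi\big)\,ds = \int(\psi\varphi_s/\varphi)_s\,ds$ is false, since $(\psi\varphi_s/\varphi)_s$ has an extra $-\psi\varphi_s^2/\varphi^2$ term. There is no need for integration by parts at this stage; simply substitute $\psi_{ss}=-\psi K^\top$, $\psi_s\varphi_s/\varphi=-\psi K_2^\perp$, $\psi\varphi_{ss}/\varphi=-\psi K_1^\perp$ directly into your integrand to obtain
\[
A_\tau'(t)=-\int_{\Sigma^2}\big(2K^\top+(n-2)(K_1^\perp+K_2^\perp)\big)\,d\mu_{g^\top},
\]
which is exactly the paper's formula. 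The only integration by parts needed is Gauss--Bonnet itself, $\int_{\Sigma^2}K^\top\,d\mu_{g^\top}=4\pi$, and then the two-sided bound follows immediately from $0<K_1^\perp,K_2^\perp\le K^\top$.
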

\begin{proof}
We fix $\tau$ and drop the $\tau$ subscripts throughout the proof. 
We directly compute, using the Ricci flow equation,
\begin{align*}
A'(t)&=  \frac12 \int_{\Sigma^2} \tr_{g_\tau^\top}\left(\partial_t \, g_\tau^\top  \right) d \mu_{g_\tau^\top} \\
&=-\int_{{\Sigma}^2}  (2K^{\top}+{(n-2)}(K_1^{\perp}+K_2^{\perp}) )d \mu_{g_\tau^\top}\,.
\end{align*}
Owing to Theorem \ref{prop:curvaturepreserved}, equation \eqref{initialcurvature} and the Gauss--Bonnet theorem, we obtain
\begin{align*}
-8\pi (n-1)\le A'(t)\le -8\pi.
\end{align*}
Integrating this estimate and using $\lim_{t\to 0}A(t)=0$ gives the inequalities for $A(t)$. The estimate for $\alpha_{\tau}$ then follows immediately from $A(\alpha_{\tau})=-8\pi \tau$ \eqref{eqn_area_KR}.
\end{proof}


\begin{lem}\label{hlbound}
Along the Ricci flows $\big( S^n, g_\tau(t)\big)_{t\in [\alpha_{\tau},-(n-1))}$ we have 
\begin{align*}
1  - \frac{n-1}{-t} 
\le h_\tau(t)\le 1 \quad \ \text{and} \quad  - 2 \, t \le \ell_\tau(t)\le  \frac{-4(n-1)t}{1- \frac{n-1}{-t}}.
\end{align*}
\end{lem}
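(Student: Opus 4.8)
The plan is to extract all four inequalities from three ingredients: the evolution of the waist radius $h_\tau$, the concavity (in arclength) of the warping function $\psi_\tau(\cdot,t)$, and the area bounds of Lemma~\ref{areaexistencetime}. Throughout I drop the subscript $\tau$ and write $K^\top(0,t)$ for the Gauss curvature of the section $(\Sigma^2,g^\top(t))$ at the waist; recall that $A_\tau(t)=4\pi\int_0^{\ell_\tau(t)}\psi\,ds$ is the area of the section. First I would record the evolution equation $\tfrac{d}{dt}h_\tau=-(n-1)K^\top(0,t)\,h_\tau$, obtained by evaluating \eqref{eq:doubly warped Ricci flow psi} at the waist: there $\psi_s=0$ since $\psi$ is even, and l'H\^opital gives $\lim_{s\to0}\psi_s\varphi_s/\varphi=\psi_{ss}(0,t)$ (using $\varphi(0,t)=0$, $\varphi_s(0,t)=1$), so that $\psi_t(0,t)=(n-1)\psi_{ss}(0,t)=-(n-1)K^\top(0,t)h_\tau$. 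By Proposition~\ref{lem:curvaturepreserved}, $K^\top\ge L>0$ is preserved along the flow, so $h_\tau$ is strictly decreasing; since $h_\tau(\alpha_\tau)=\psi_\tau(0)=\chi_\tau(0)=\sqrt{\tanh(-2\tau)}<1$ by \eqref{initialfunctions}, this gives $h_\tau(t)\le1$.

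For the $\ell_\tau$ bounds, note that Proposition~\ref{lem:curvaturepreserved} also gives $\psi_{ss}=-K^\top\psi<0$, so $\psi(\cdot,t)$ is concave on $[0,\ell_\tau(t)]$ with $\psi(0,t)=h_\tau$ and $\psi(\ell_\tau,t)=0$. Concavity yields $\int_0^{\ell_\tau}\psi\,ds\ge\tfrac12 h_\tau\ell_\tau$, while $\psi\le h_\tau\le1$ yields $\int_0^{\ell_\tau}\psi\,ds\le\ell_\tau$, so $2\pi h_\tau\ell_\tau\le A_\tau(t)\le4\pi\ell_\tau$. Feeding in $-8\pi t\le A_\tau(t)\le-8\pi(n-1)t$ from Lemma~\ref{areaexistencetime} gives at once $\ell_\tau(t)\ge A_\tau(t)/(4\pi)\ge-2t$ and $\ell_\tau(t)\le A_\tau(t)/(2\pi h_\tau)\le-4(n-1)t/h_\tau$; the second inequality becomes the claimed upper bound once the lower bound on $h_\tau$ is available.

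The remaining — and hardest — point is the lower bound $h_\tau(t)\ge1-(n-1)/(-t)$. From the first paragraph $\tfrac{d}{dt}\log h_\tau=-(n-1)K^\top(0,t)$, so everything hinges on a good upper bound for the waist curvature. The obvious bound $K^\top(0,t)\le4\pi/A_\tau(t)$ — from Gauss--Bonnet together with $K^\top(0,t)=\min_\Sigma K^\top$, valid since $K^\top_s\ge0$ (Proposition~\ref{prop:curvaturepreserved}) — gives only $\tfrac{d}{dt}\log h_\tau\ge(n-1)/(2t)$, which integrates to a $\tau$-dependent estimate that degenerates as $\tau\to-\infty$, and is not enough. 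The improvement I would use is to combine $K^\top_s\ge0$ with concavity more sharply: since
\[
(\log\psi)_{ss}=-K^\top-(\psi_s/\psi)^2\le-K^\top\le-K^\top(0,t)\qquad\text{and}\qquad(\log\psi)_s(0,t)=0,
\]
one obtains the Gaussian bound $\psi(s,t)\le h_\tau\,e^{-K^\top(0,t)s^2/2}$; integrating in $s$ and using $A_\tau(t)\ge-8\pi t$, $h_\tau\le1$ then gives the much stronger
\[
\frac{A_\tau(t)}{4\pi}=\int_0^{\ell_\tau}\psi\,ds\le h_\tau\sqrt{\frac{\pi}{2K^\top(0,t)}},\qquad\text{hence}\qquad K^\top(0,t)\le\frac{8\pi^3 h_\tau^2}{A_\tau(t)^2}\le\frac{\pi}{8\,t^2}.
\]

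With this, $\tfrac{d}{dt}\log h_\tau\ge-(n-1)\pi/(8t^2)$ is integrable; integrating from $\alpha_\tau$ to $t$ and using $-\alpha_\tau\le-\tau$ (Lemma~\ref{areaexistencetime}) gives $h_\tau(t)\ge h_\tau(\alpha_\tau)\exp\!\big(-(n-1)\pi/(8(-t))\big)$. Since $h_\tau(\alpha_\tau)=\sqrt{\tanh(-2\tau)}$ differs from $1$ by at most $2e^{4\tau}$, and since on the (nonempty) interval $[\alpha_\tau,-(n-1))$ one has $\tau\le\alpha_\tau\le t<-(n-1)$ so that this error is negligible while the constant $\pi/8<1$ leaves slack, a routine elementary estimate (e.g.\ $16(-t)e^{4t}\le32e^{-8}<(n-1)(8-\pi)$ for $-t\ge n-1\ge2$, together with $e^{-x}\ge1-x$ and $\sqrt y\ge y$) converts this into $h_\tau(t)\ge1-(n-1)/(-t)$, as required. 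I expect the crux to be exactly this third step — in particular the realization that the Gauss--Bonnet bound on $K^\top(0,t)$ must be upgraded, via the Gaussian decay of $\psi$, to one of order $t^{-2}$ with a subunit constant.
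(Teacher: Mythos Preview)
Your argument is correct, and the first two steps---the upper bound on $h$ via the evolution equation, and the sandwich $2\pi h\ell\le A\le 4\pi\ell$ from concavity of $\psi$---coincide with the paper's. The genuine difference is in the key third step, the upper bound on the waist curvature $K^\top(0,t)$. The paper invokes the contrapositive of Myers' theorem on the section $(\Sigma^2,g^\top)$: since the meridian joining the two tips has length $2\ell$ and $K^\top(0,t)=\min_\Sigma K^\top$ by the monotonicity $K^\top_s\ge 0$, one has $K^\top(0,t)\le\pi^2/(4\ell^2)$, and then $\ell\ge -2t$ gives $K^\top(0,t)\le\pi^2/(16t^2)<1/t^2$. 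Your Gaussian comparison $(\log\psi)_{ss}\le -K^\top(0,t)$, integrated against the area lower bound, produces the slightly sharper $K^\top(0,t)\le\pi/(8t^2)$ without any appeal to Riemannian comparison geometry. The paper's route is a one-liner once Myers is granted; yours is fully self-contained but pays for it with the tail bookkeeping you flag at the end. After this, both proofs integrate the same ODE for $h$: the paper uses $h\le 1$ to write $h'\ge -(n-1)/t^2$ and integrates directly (tacitly absorbing the defect $1-h(\alpha_\tau)=1-\sqrt{\tanh(-2\tau)}$ into the positive term $(n-1)/(-\alpha_\tau)$, which works by the same elementary estimate you carry out), while you integrate $(\log h)'$ and then unpack.
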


\begin{proof}
We fix $\tau$ and drop $\tau$ subscripts throughout the proof.  Since $K^\top > 0$,  $\psi(s,t)$ is a concave function of $s$, with $\psi_s(0,t) = 0$ by Proposition \ref{prop:ch1mfsmoothness}. Thus, for  $s\in (0,\ell(t))$ we have
\begin{equation}\label{eqn_estimate_psi_h}
	h(t) = \psi(0,t) \geq \psi(s,t) \geq h(t) (1 - s/\ell(t)).
\end{equation}
Curvature positivity also implies that $\psi(s,t)$ is non-increasing in $t$, see \eqref{eq:doubly warped Ricci flow psi}. Hence, $h(t) \leq h(\alpha_\tau) = 1$. 

The formula for the area in \eqref{importantquantities} together with \eqref{eqn_estimate_psi_h} immediately yield 
\begin{align}\label{halestimate}
2 \pi h(t)\ell(t)\le A(t)\le 4 \pi  h(t)\ell(t).
\end{align}
Using $h(t) \leq 1$ and Lemma \ref{areaexistencetime} we get $\ell(t) \geq -2t$. 


Noticing that $2 \, \ell(t)$ is the length of a minimising geodesic in $(\Sigma^2, g_\tau^\top(t))$, the counterpositive of Myer's theorem and  $(K^{\top})_s\ge 0$ (Theorem \ref{prop:curvaturepreserved})  imply
\begin{align}\label{Myer}
K^{\top}(0)=\min_{{\Sigma}^2}K^{\top}\le \frac{\pi^2}{4\ell(t)^2}. 
\end{align}
By using l'H\^opital's rule at $r=0$ in \eqref{eq:doubly warped Ricci flow psi} we then obtain  
\ba
	h'(t) = -(n-1)h(t)K^\top\big|_{r=0}
	\ge{}-\frac{\pi^2(n-1)}{4}\frac{h(t)}{\ell(t)^2}
	\ge -\frac{ \pi^2 (n-1)}{16 t^2} \geq - \frac{n-1}{t^2}.
\ea
Integrating gives our lower bound for $h(t)$. 

Finally, \eqref{halestimate}, Lemma \ref{areaexistencetime} and the lower bound for $h(t)$ give
\bann
	\ell(t) \leq \frac{A(t)}{2\pi h(t)} \leq  \frac{-4(n-1)t}{1-  (n-1) \left( \alpha_\tau^{-1}  - t^{-1} \right)}.
\eann
(Notice that, since $t< -(n-1)$, the denominator in the right-hand-side is positive.)
\end{proof}

%


\begin{lem}\label{dbound}
Along the Ricci flows $\big( S^n,g_\tau(t)\big)_{t\in [\alpha_{\tau},0)}$ we have 
\begin{align*}
d_\tau\ge
\delta\log(-t)-C\;\;\text{for}\;\; t\le -2(n-1)\,,
\end{align*}
where $\delta\doteqdot \frac{1}{4(n-1)}$ and $C\doteqdot \frac{\log(2(n-1))}{4(n-1)}$.
\end{lem}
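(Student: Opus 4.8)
The plan is to follow the single scalar $d_\tau(t) = \varphi_\tau(\tfrac{\pi}{2},t)$ directly and reduce its time derivative to an ordinary differential inequality. Since $\tfrac{\pi}{2}$ is a fixed value of the time-independent coordinate $r$, we have $\tfrac{d}{dt}d_\tau(t) = \partial_t\varphi_\tau\big|_{r=\pi/2}$. On the principal part, rewriting the Ricci flow equation \eqref{eq:doubly warped Ricci flow phi} by means of $\varphi_{ss}/\varphi = -K_1^\perp$, $\psi_s\varphi_s/(\psi\varphi) = -K_2^\perp$ and $(1-\varphi_s^2)/\varphi^2 = L$ gives the identity
\[
  \partial_t\varphi_\tau = -\,\varphi_\tau\big(K_1^\perp + K_2^\perp + (n-3)L\big),
\]
whose right-hand side extends continuously up to $r = \tfrac{\pi}{2}$ because all four curvature functions are smooth on $S^n$. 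At the tip one has $K_1^\perp = K_2^\perp$ and $L = 1/d_\tau^2$; both are visible from the local expansions \eqref{catip} used in the proof of Proposition \ref{lem:curvaturepreserved} (the second also follows from $L = (1-\varphi_s^2)/\varphi^2$ and $\varphi_s(\tfrac{\pi}{2},t)=0$, as $\varphi$ is even at $\tfrac{\pi}{2}$). Hence $\tfrac{d}{dt}d_\tau = -d_\tau\,(2K_1^\perp + (n-3)L)\big|_{r=\pi/2}$.

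Next I would feed in the preserved ordering $K_1^\perp \geq L$ from Proposition \ref{lem:curvaturepreserved}, which at the tip reads $K_1^\perp\big|_{r=\pi/2}\ge 1/d_\tau^2$ and therefore $d_\tau^2(2K_1^\perp + (n-3)L)\big|_{r=\pi/2}\ge (n-1)\,d_\tau^2\,L\big|_{r=\pi/2} = n-1$. Multiplying the last identity by $2d_\tau$ yields the clean differential inequality
\[
  \tfrac{d}{dt}\big(d_\tau(t)^2\big)\;\le\;-\,2(n-1),\qquad t\in[\alpha_\tau,0).
\]
Integrating from $t$ to $0$, and using that $\tfrac{1}{-2(n-1)t}g_\tau(t)$ converges smoothly to the round metric as $t\to 0$ (so $d_\tau(t)^2 = \varphi_\tau(\tfrac{\pi}{2},t)^2\to 0$), gives $d_\tau(t)^2 \ge -2(n-1)t$ for all $t\in[\alpha_\tau,0)$ — equivalently one may integrate from the reference time $-2(n-1)$ and bound $d_\tau(-2(n-1))^2\ge 0$. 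In particular $d_\tau(t)\ge\sqrt{-t}$, and since $\sqrt{x}\ge\log x$ for every $x>0$ while $\log(-t)\ge\delta\log(-t)-C$ whenever $-t\ge 2(n-1)$ (there $\log(-t)>0$ and $C>0$), we obtain $d_\tau(t)\ge\delta\log(-t)-C$ for $t\le -2(n-1)$, the bound being vacuous where its right-hand side is negative.

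The only delicate step is the passage to the singular orbit $r=\tfrac{\pi}{2}$: evaluating $\partial_t\varphi_\tau$ there through the continuous extension of $-\varphi_\tau(K_1^\perp+K_2^\perp+(n-3)L)$, and the two tip identities $K_1^\perp=K_2^\perp$, $L=d_\tau^{-2}$. These are furnished by the boundary regularity in Proposition \ref{prop:ch1mfsmoothness} and by the expansions \eqref{catip}, so no further analysis is needed — everything else is a one-line ODE comparison. Note that this argument in fact produces the much stronger bound $d_\tau(t)\gtrsim\sqrt{-t}$; only the logarithmic version is recorded because that is all that is used later.
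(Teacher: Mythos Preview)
Your argument is correct and in fact takes a cleaner route than the paper's. Both proofs start from the same identity $\tfrac{d}{dt}d_\tau = -d_\tau\big(2K_1^\perp + (n-3)L\big)\big|_{r=\pi/2}$ obtained by evaluating \eqref{eq:doubly warped Ricci flow phi} at the tip via l'H\^opital. The paper then bounds $K_1^\perp|_{\mathrm{tip}}$ from below using the integral identity $1 = -\varphi_s|_{s=\ell} = \int_0^\ell \varphi K_1^\perp\,ds \le d_\tau\,\ell\,K_1^\perp|_{\mathrm{tip}}$, leading to $d_\tau' \le -2/\ell$; feeding in the upper bound $\ell \le -8(n-1)t$ from Lemma~\ref{hlbound} and integrating yields the logarithmic lower bound with exactly the stated constants.

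You instead invoke the preserved ordering $K_1^\perp \ge L$ from Proposition~\ref{lem:curvaturepreserved}, which at the tip reads $K_1^\perp \ge 1/d_\tau^2$, and this immediately gives $(d_\tau^2)' \le -2(n-1)$ and hence $d_\tau \ge \sqrt{-2(n-1)t}$. Your approach has two advantages: it is self-contained (no dependence on the $\ell$-estimate of Lemma~\ref{hlbound}), and it produces the sharper growth $d_\tau \gtrsim \sqrt{-t}$ rather than $\log(-t)$. The paper's route, on the other hand, uses only the monotonicity $(K_1^\perp)_s \ge 0$ and the boundary condition $\varphi_s|_{s=\ell} = -1$, so it would survive in settings where the full ordering $K_1^\perp \ge L$ is not available. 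For the purposes of this paper only the logarithmic bound is used downstream, so either proof suffices.
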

\begin{proof}
We again fix $\tau$ and drop the $\tau$ subscripts throughout the proof. 
%
Estimating $\varphi\le d$ and $K^\perp_1\le K^\perp_1|_{r=\frac{\pi}{2}}$, we find that
\[
1=-\varphi_s|_{s=\ell(t)}=\int_0^{\ell(t)}(-\varphi_{ss})\,ds=\int_0^{\ell(t)}\varphi K^\perp_1\,ds\le d\ell K^\perp_1|_{r=\frac{\pi}{2}}\,.
\]
Since $K^\perp_2=K_1^\perp$ at the tip, $s = \ell$, \eqref{eq:doubly warped Ricci flow phi} and l'H\^opital's rule then yield
\begin{align*}
d'  \le -\frac{2}{\ell}\,.
\end{align*}
Applying the estimate for $\ell$ from Lemma \ref{hlbound} and integrating from time $t$ to time $-2(n-1)$ yields the claim.
\end{proof}

Finally, we obtain a uniform upper bound for the scalar curvature, and hence the entire curvature operator, through Hamilton's Harnack inequality \cite{HamiltonHarnackRicci}. 

\begin{lem}\label{scalbound}
There exists a constant $C(n)>0$ such that for any sufficiently large $\tau$, and for each $t\in (\tfrac{\alpha_{\tau}}{10}, - 2(n-1))$, the maximal scalar curvature $\Sc^\tau_{\max}$ of the Ricci flow $(S^n,g_\tau(t))_{t\in[\alpha_\tau,0)}$ is bounded by
\begin{align*}
\Sc_{\max}^\tau(t)\le \frac{Ce^{-Ct}}{t^2}. 
\end{align*}
\end{lem}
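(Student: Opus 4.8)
The plan is to apply Hamilton's trace Harnack inequality for Ricci flows with bounded, nonnegative curvature operator \cite{HamiltonHarnackRicci}, comparing the scalar curvature at the bad time $t$ with its value at a much later time where we already control the geometry. Recall that for an ancient-type solution (or one existing on a long interval before $t$) with weakly positive curvature operator, Hamilton's Harnack inequality gives, for any two spacetime points $(x_1,t_1)$ and $(x_2,t_2)$ with $t_1 < t_2$,
\[
    \Sc(x_1,t_1) \le \frac{t_2 - t_1 + \Delta}{t_1 - \alpha_\tau}\,\Sc(x_2,t_2)\,\exp\!\left(\frac{\dist_{t_2}(x_1,x_2)^2}{2(t_2-t_1)}\right),
\]
where $\Delta$ absorbs the shift of the time origin (here the flow exists on $[\alpha_\tau,0)$, so the natural "age" at time $\tau'$ is $\tau' - \alpha_\tau$). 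Since our flows $(S^n,g_\tau(t))$ have positive curvature operator by Lemma \ref{lem_sec_gtau} together with Appendix \ref{app_curvature}, and by B\"ohm--Wilking \cite{BohmWilking} this is preserved, the hypotheses are met.

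First I would fix $t \in (\tfrac{\alpha_\tau}{10}, -2(n-1))$ and a point $x_1$ realising $\Sc_{\max}^\tau(t)$, and choose the comparison time to be, say, $t_2 = -(n-1)$ (or any fixed time of order $1$ close to the singularity), with $x_2$ arbitrary. The Harnack inequality then reads
\[
    \Sc_{\max}^\tau(t) \;\le\; \frac{t_2 - t + \Delta}{t - \alpha_\tau}\;\Sc(x_2,t_2)\;\exp\!\left(\frac{\dist_{t_2}(x_1,x_2)^2}{2(t_2 - t)}\right).
\]
For this to be useful I need three ingredients at the fixed time $t_2$: (i) an upper bound on $\Sc(x_2,t_2)$; (ii) an upper bound on the diameter $\diam(S^n,g_\tau(t_2))$, to control $\dist_{t_2}(x_1,x_2)$; and (iii) control of the prefactor. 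For (iii), $t - \alpha_\tau \ge \tfrac{9}{10}|\alpha_\tau| \ge \tfrac{9}{10}\cdot\tfrac{-\tau}{n-1}$ by Lemma \ref{areaexistencetime}, which is comparable to $-t$ up to the factor $10$; and $t_2 - t + \Delta \le C(n)(-t)$, so the prefactor is $\le C(n)$, a constant. For (i) and (ii) at the single fixed time $t_2 = -(n-1)$: the diameter is controlled by $\ell_\tau(t_2) + h_\tau(t_2) + d_\tau(t_2)$, and while Lemma \ref{hlbound} bounds $\ell$ and $h$ at such times, the quantity $d_\tau(t_2)$ must be bounded above — this follows because $\varphi_s \in (-1,1)$ (from $L = (1-\varphi_s^2)/\varphi^2 > 0$) forces $d_\tau(t_2) = \varphi_\tau(\pi/2,t_2) \le \ell_\tau(t_2) \le C(n)$; similarly $\Sc(x_2,t_2)$ is bounded because the curvatures $K^\top,K_1^\perp,K_2^\perp,L$ at the fixed late time $t_2$ are controlled via the Gauss--Bonnet/area estimate and the geometry established in Lemmas \ref{areaexistencetime}--\ref{dbound} (e.g. $K^\top \le \pi^2/(4\ell(t_2)^2)$ is bounded; $L \le 1/d_{\min}^2$; and the off-diagonal ordering $K^\top \ge K_2^\perp \ge K_1^\perp \ge L$ from Proposition \ref{lem:curvaturepreserved} then bounds all of them). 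Plugging these into the Harnack estimate, with $\dist_{t_2}(x_1,x_2)^2 \le C(n)$ and $2(t_2 - t) \ge 2|t|/2 = |t|$ for $|t|$ large, the exponential factor is $\exp(C(n)/(-t)) \le C(n)$ — actually bounded — so one even gets $\Sc_{\max}^\tau(t) \le C(n)$. To recover the stated (weaker) bound $\tfrac{Ce^{-Ct}}{t^2}$, note $\tfrac{Ce^{-Ct}}{t^2} \ge \tfrac{C}{t^2} \cdot 1 \to 0$, which is the wrong direction; so more care is needed and I would instead compare with a time $t_2$ depending mildly on $t$ — taking $t_2 = t/2$, so that $t_2 - t = -t/2$, $t - \alpha_\tau \gtrsim -t$, the prefactor is $O(1)$, the distance $\dist_{t_2}(x_1,x_2) \le \diam(t_2) \le C\ell_\tau(t/2) \le C(n)(-t)$ by Lemma \ref{hlbound}, giving an exponential factor $\exp(C(-t)^2/(-t)) = \exp(C(-t))= e^{-Ct}$, and then bounding $\Sc(x_2,t/2)$ crudely by $\Sc_{\max}^\tau(t/2)$ and iterating/absorbing, or more cleanly bounding $\Sc(x_2,t/2) \le C/t^2$ from the later-time estimates already in hand (the round-sphere normalisation $\tfrac{1}{-2(n-1)t}g_\tau(t) \to g_{\mathrm{round}}$ gives $\Sc \sim C/(-t)$ near $t=0$, but at $t/2$ with $|t|$ large we use the crude $\ell, h, d$ bounds to get $\Sc(\cdot,t/2)\le C/t^2$). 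Combining yields $\Sc_{\max}^\tau(t) \le \tfrac{C e^{-Ct}}{t^2}$.

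The main obstacle is getting a clean, $\tau$-independent upper bound on the curvature (equivalently the diameter and $\Sc$) at the comparison time $t_2 = t/2$: this is where one must carefully package the estimates of Lemmas \ref{areaexistencetime}, \ref{hlbound}, \ref{dbound} — in particular that $\ell_\tau$ grows at most linearly in $-t$ and $\varphi_s\in(-1,1)$ bounds $d_\tau$ by $\ell_\tau$, so the full diameter is $O(-t)$ — and verify that the Harnack constant $\Delta$ and the age normalisation $t-\alpha_\tau$ interact correctly given that $\alpha_\tau \asymp \tau$ and $t > \alpha_\tau/10$. The restriction to "sufficiently large $\tau$" enters precisely to ensure $\alpha_\tau$ is very negative so that the interval $[\alpha_\tau, t_2]$ is long compared to $|t_2 - t|$, making the Harnack prefactor harmless; and the restriction $t < -2(n-1)$ ensures Lemmas \ref{hlbound} and \ref{dbound} apply on the relevant range. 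Once the fixed-time bounds are in place the rest is a direct substitution into Hamilton's inequality.
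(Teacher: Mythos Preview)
Your second approach---take $t_2=t/2$ in Hamilton's integrated Harnack inequality, bound the prefactor using $t>\alpha_\tau/10$, and bound the exponential factor via $\diam\le C\ell_\tau(t/2)\le C(n)(-t)$---is exactly what the paper does. So the strategy is right.

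There is, however, one genuine gap. You need $\Sc(x_2,t/2)\le C/t^2$, and you gloss over how this is obtained, even writing it as a pointwise bound $\Sc(\cdot,t/2)\le C/t^2$. That is not available: the only curvature bound you have at time $t/2$ is the Myers estimate \eqref{Myer}, which controls only the \emph{minimum} of $K^\top$, namely $K^\top_{\min}(t/2)\le \pi^2/(4\ell(t/2)^2)\le C/t^2$. To turn this into a scalar curvature bound you must choose the comparison point $x_2$ at the waist (where $K^\top$ attains its minimum, by the monotonicity $(K^\top)_s\ge 0$ from Proposition~\ref{prop:curvaturepreserved}), and then use the ordering $K^\top\ge K_2^\perp\ge K_1^\perp\ge L$ from Proposition~\ref{lem:curvaturepreserved} to bound the full scalar curvature there by $n(n-1)K^\top_{\min}$. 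The paper phrases this as a comparison of $\Sc_{\max}(t)$ against $\Sc_{\min}(t/2)$, which is the same thing.

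This same issue infects your first attempt with fixed $t_2=-(n-1)$: the assertion ``$K^\top\le\pi^2/(4\ell(t_2)^2)$ is bounded'' is false as a pointwise statement---Myers only bounds $K^\top_{\min}$, so you cannot bound $\Sc$ at an arbitrary $x_2$ that way. (Incidentally, your claim $L\le 1/d_{\min}^2$ is also suspect, since $d_\tau$ is a maximum of $\varphi$, not a minimum.) Once you make the ``choose $x_2$ at the waist and invoke Myers plus the curvature ordering'' step explicit, the second approach goes through and coincides with the paper's proof.
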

\begin{proof}
Once again, we drop the $\tau$ superscript. The Harnack inequality implies that
\[
\Sc_{\max}(t)\le\frac{\sigma-\alpha_{\tau}}{t-\alpha_{\tau}}\mathrm{e}^{\frac{\ell^2(t)}{2(\sigma-t)}}\Sc_{\min}(\sigma)
\]
for $\alpha_\tau < t < \sigma < 0$. Setting $\sigma=\frac{t}{2}$ gives 
\begin{align*}
\Sc_{\max}(t)\le\frac{\frac{t}{2}-\alpha_{\tau}}{t-\alpha_{\tau}}\mathrm{e}^{\frac{\ell^2(t)}{-t}}\Sc_{\min}(\tfrac{t}{2})\le10\mathrm{e}^{\frac{\ell^2(t)}{-t}}\Sc_{\min}(\tfrac{t}{2}),
\end{align*}
provided $t\in (\frac{\alpha_{\tau}}{10},0)$. Since $\ell(\frac{t}{2})\ge -2\pi t$ (Lemma \ref{hlbound}), we can use  \eqref{Myer} to conclude that 
\begin{align*}
\Sc_{\min}(\tfrac{t}{2})\le n(n-1)K_{\min}^{\top}\le \frac{n(n-1)}{16 t^2}.
\end{align*}
Estimating
\[	
	\ell(t) \leq - 8 (n-1) t
\]
for $t<-2(n-1)$ via Lemma \ref{hlbound} yields the claim.
\end{proof}

Note that, for $t\ge -2(n-1)$, we may estimate $\ell(t)\le \ell(-2(n-1))\le16(n-1)^2$, which yields the extremely bad (but uniform in $\tau$) estimate
\begin{equation}\label{eq:Sc bound up to time zero}
\Sc_{\max}^\tau\le \frac{Ce^{\frac{C}{-t}}}{t^2},
\end{equation}
where $C=C(n)$.

\subsection{Taking the limit}

We are now in a position to construct the desired $O(2)\times O(n-1)$-invariant ancient Ricci flow on ${S}^n$.

\begin{proof}[Proof of Theorem \ref{mainexistence}]
Choose an arbitrary time interval $I$ compactly contained in $(-\infty,0)$. Since $\alpha_{\tau}\to -\infty$ as $\tau\to -\infty$, the estimates of Lemma \ref{scalbound} and \eqref{eq:Sc bound up to time zero} yield a uniform-in-$\tau$ bound for the Riemann curvature operator $\Rm$ of the Riemannian metrics $\{g_\tau(t)\}_{t\in I}$. 
Shi's estimates \cite{MR1001277} then imply uniform-in-$\tau$ estimates for all spatial derivatives of $\Rm$ (and hence also of the sectional curvatures) on $I$. 

Define now the functions $A_{\tau},B_{\tau}:\mathbb{R}\times I\to \mathbb{R}$  (for sufficiently large $\tau$)
with 
\begin{align*}
A_{\tau}(s,t)=\begin{cases}
\psi_{\tau}(s,t) &\ \text{if}\ 0\le s\le \ell_\tau(t),\\
-\psi_{\tau}(2\ell_\tau(t)-s,t) &\ \text{if}\ \ell_\tau(t)\le s\le 2\ell_\tau(t),\\
-\psi_{\tau}(s-2\ell_\tau(t)) &\ \text{if}\ 2\ell_\tau(t)\le s\le 3\ell_\tau(t),\\
\psi_{\tau}(4\ell_\tau(t)-s,t) & \ \text{if} \ 3\ell_\tau(t)\le s\le 4\ell_\tau(t),
\end{cases}\\
B_{\tau}(s,t)=\begin{cases}
\varphi_{\tau}(s,t) &\ \text{if}\ 0\le s\le \ell_\tau(t),\\
\varphi_{\tau}(2\ell_\tau(t)-s,t) &\ \text{if}\ \ell_\tau(t)\le s\le 2\ell_\tau(t),\\
-\varphi_{\tau}(s-2\ell_\tau(t)) &\ \text{if}\ 2\ell_\tau(t)\le s\le 3\ell_\tau(t),\\
-\varphi_{\tau}(4\ell_\tau(t)-s,t) & \ \text{if} \ 3\ell_\tau(t)\le s\le 4\ell_\tau(t),
\end{cases}
\end{align*}
and then extended to all of $\mathbb{R}$ by insisting that $A_{\tau}(\cdot,t)$ and $B_{\tau}(\cdot,t)$ are both $4\ell_\tau(t)$-periodic. The functions are smooth because of the parity conditions of Proposition \ref{prop:ch1mfsmoothness}. We claim that all derivatives of $A_{\tau}$ and $B_{\tau}$ are uniformly bounded on $I\times \mathbb{R}$, independently of $\tau$. Indeed, 
 Lemma \ref{hlbound} gives us uniform bounds on $\ell_\tau(t)$, and Lemma \ref{scalbound} gives us uniform bounds on $\frac{A_{ss}}{A},\frac{B_{ss}}{B}$; combining with the conditions $A_{\tau}(\ell_\tau(t),t)=0$, $(A_{\tau})_s(\ell_\tau(t),t)=-1$, $B_{\tau}(0,t)=0$, $(B_{\tau})_s(0,t)=1$ gives uniform bounds on $A_{\tau},B_{\tau}$ and their first spatial derivatives. Uniform bounds on all spatial derivatives then follow from the uniform bounds on all spatial derivatives of curvature. Bounds on all mixed space/time derivatives then follows from the evolution equations \eqref{eq:doubly warped Ricci flow} and the Lie bracket equation \eqref{eq:commutator}. 

The Arzel\`a--Ascoli theorem now implies that, for any sequence of times $\tau_j\to-\infty$ and any $k\in \mathbb{N}$, there exist functions $A_I,B_I:\mathbb{R}\times I\to \mathbb{R}$ to which $A_{\tau_j},B_{\tau_j}$ converge in $C^k$ as $j\to \infty$. A standard diagonal argument can be used to extend these limiting functions to all of the time interval $(-\infty,0)$. The lower bounds for $d_\tau(t)$, $h_\tau(t)$, upper and lower bounds for $\ell_\tau(t)$ and positive curvature ensures that these functions can be used to construct an $O(2)\times O(n-1)$-invariant time-varying metric which evolves by Ricci flow and has the same curvature properties \eqref{initialcurvature}. Note that the time interval $(-\infty,0)$ on which our solution has been constructed is maximal due to the upper bound for the area $A$ in Lemma \ref{areaexistencetime}.
\end{proof}

\section{Asymptotics}

We now examine the backwards asymptotics of our constructed ancient Ricci flow $(S^n,g(t))_{t\in (-\infty,0)}$. First note that since $g(t)$ is $O(2)\times O(n-1)$-invariant, it has the form \eqref{ch1mf} for some functions $\chi,\psi,\varphi$. We can again define the geometric quantities \eqref{importantquantities}, and by Lemmas \ref{areaexistencetime}, \ref{hlbound} and \ref{dbound}, our ancient solution satisfies 
\begin{align}\label{ancientcrude}
\begin{split}
-8\pi t\le A(t)\le -8\pi(n-1)t, \qquad 1-\frac{n-1}{-t}\le h(t)\le 1, \\ -2t\le \ell(t) \le -8(n-1)t
, \qquad \frac{1}{4(n-1)}\log\left(\frac{-t}{2(n-1)}\right)\le d(t)
\end{split}
\end{align} 
for all $t<-2(n-1)$. 
We start with asymptotics near the waist:

\begin{lem}
Let $p_{\sf waist} \in S^n$ be a fixed reference point on the ``waist''. Then, the time-translated pointed Ricci flows $(S^n, g(t-\tau), p_{\sf waist})_{t\in (-\infty,\tau)}$ converge smoothly as $\tau \to \infty$ to the stationary flat solution $S^1(2\pi) \times \mathbb{R}^{n-1}$.
\end{lem}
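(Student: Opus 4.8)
The plan is to show that, after time-translation and passing to the limit, the metric near the waist converges to a product of a flat circle with Euclidean space, by tracking the warping functions $\chi,\psi,\varphi$ and their derivatives near $r=0$. The key observation is that the three geometric quantities in \eqref{ancientcrude} pin down the local geometry at the waist: $h(t)\to 1$ forces the $S^1$ factor to have radius tending to $1$ (in the normalisation where the unit circle has length $2\pi$, hence the notation $S^1(2\pi)$), while the lower bound $\ell(t)\ge -2t\to\infty$ together with the curvature estimates forces the curvature to decay near the waist, so the geometry flattens out in the remaining $n-1$ directions. More precisely, I would work in arc-length coordinate $s$ based at the waist $s=0$ and observe, via Lemma~\ref{scalbound} (or rather its time-translated consequence, using $\Sc_{\max}(t-\tau)\to 0$ as $\tau\to\infty$ for fixed $t$), that all sectional curvatures of $g(t-\tau)$ tend to zero uniformly on compact $s$-intervals; combined with Shi's estimates this gives smooth convergence of the metric coefficients.

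The steps, in order. First, establish curvature decay: for fixed $t$, as $\tau\to\infty$, Lemma~\ref{scalbound} gives $\Sc_{\max}(t-\tau)\le C e^{-C(t-\tau)}/(t-\tau)^2\to 0$, so by positivity of the curvature operator all four sectional curvatures $K^\top, K^\perp_1, K^\perp_2, L$ of $g(t-\tau)$ converge to $0$ locally uniformly. Second, control the metric coefficients near the waist: writing the metric in arc-length coordinate $s$ as $ds^2+\psi^2 d\theta^2+\varphi^2 g_{S^{n-2}}$, recall $\psi_s(0)=0$, $\psi(0)=h(t-\tau)\to 1$, and $\varphi(0)=0$, $\varphi_s(0)=1$; the equations $K^\top=-\psi_{ss}/\psi$, $K^\perp_1=-\varphi_{ss}/\varphi$ together with the curvature decay and the smoothness/parity structure from \eqref{cawaist} force $\psi_{ss}\to 0$ and $\varphi_{ss}\to 0$ locally uniformly, hence $\psi(s,t-\tau)\to 1$ and $\varphi(s,t-\tau)\to s$ in $C^\infty_{loc}$. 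Third, invoke the uniform bounds on all derivatives of the warping functions (exactly as established in the Proof of Theorem~\ref{mainexistence}, which gives uniform control on $A_\tau,B_\tau$ and all their space-time derivatives) together with Arzel\`a--Ascoli to extract a smooth limiting flow; the limit metric is $ds^2 + d\theta^2 + s^2 g_{S^{n-2}}$, which is precisely the flat product $S^1(2\pi)\times\mathbb{R}^{n-1}$ written in cylindrical coordinates on the $\mathbb{R}^{n-1}$ factor. Fourth, observe this limit is stationary: its Ricci tensor vanishes since it is flat, so it is a (trivial) eternal solution, consistent with the convergence.

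The main obstacle I anticipate is making the convergence \emph{pointed} and \emph{smooth as flows} rather than merely time-slice-by-time-slice: one must check that the reference point $p_{\sf waist}$ is preserved under the symmetry (it lies on the singular orbit $r^{-1}(0)$, which is canonical), that the injectivity radius at $p_{\sf waist}$ stays bounded below along $g(t-\tau)$ — here the lower bound $\ell(t-\tau)\to\infty$ for the distance to the tip, together with $h\to 1$, prevents collapse of the relevant directions near the waist, so a ball of fixed radius around $p_{\sf waist}$ is non-degenerate — and that the curvature and all derivative bounds are uniform on a whole backward time interval $(-\infty,\tau)$, which follows from \eqref{eq:Sc bound up to time zero} and Lemma~\ref{scalbound} applied at shifted times. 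A secondary subtlety is the behaviour of $\varphi$ as $s\to\infty$: since $\varphi_s^2 = 1 - \varphi^2 L \le 1$ and $L\ge 0$, we have $\varphi_s\in[0,1]$ and need $\varphi_s\to 1$ on compact sets, which again follows from $L\to 0$ and $\varphi\le d$ staying controlled; but one should be slightly careful that the convergence is genuinely to the \emph{flat} cylinder-times-Euclidean and not, say, a cigar, and this is exactly where the pointing at the waist (as opposed to the tip) is used.
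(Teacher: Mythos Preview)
Your first step contains a genuine error. You invoke Lemma~\ref{scalbound} to claim that $\Sc_{\max}(t-\tau)\le Ce^{-C(t-\tau)}/(t-\tau)^2\to 0$ as $\tau\to\infty$, but read the bound again: for $t-\tau\to-\infty$ the exponential factor $e^{-C(t-\tau)}=e^{C(\tau-t)}$ blows up, and it dominates the polynomial $(t-\tau)^2$ in the denominator. Lemma~\ref{scalbound} therefore gives a bound that \emph{diverges} as $\tau\to\infty$, not one that decays. Your entire subsequent argument (the ODE analysis forcing $\psi_{ss}\to 0$, $\varphi_{ss}\to 0$ on compact $s$-intervals) rests on uniform curvature decay that you have not established.

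The paper's proof avoids this trap by arguing in two separate stages. First, uniform \emph{boundedness} (not decay) of curvature comes from Hamilton's trace Harnack inequality for ancient solutions, which gives $\partial_t\Sc\ge 0$; hence $\Sc$ along the time-translated flows is bounded by its value at any fixed later time, and by curvature positivity the full operator is bounded. This is enough for compactness and extraction of a subsequential limiting ancient flow. Second, \emph{flatness of the limit} is deduced not from global curvature decay but from curvature vanishing at the single point $p_{\sf waist}$: the Myers-type estimate \eqref{Myer} gives $K^\top|_{r=0}\le \pi^2/(4\ell^2)\to 0$, and since the limit is an ancient flow with nonnegative curvature operator, the strong maximum principle forces it to be flat everywhere. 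Your direct ODE approach could in principle work, but you would need an independent argument for curvature decay on compact $s$-intervals---for instance, exploiting the monotonicity $K^\top_s\ge 0$ from \eqref{eq:Ksorder} together with the Myers bound at $s=0$ handles $K^\top$, but the other curvatures require more care, and you never supply this.
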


\begin{proof}
By the trace Harnack inequality for ancient Ricci flows \cite{HamiltonHarnackRicci}, $\partial_t \Sc \geq 0$, thus $\Sc$ is uniformly bounded as $\tau \to \infty$. By curvature positivity this implies that the full curvature tensor remains uniformly bounded. The length of the $S^1$-orbit converges to $2 \pi$, for $h(t) \to 1$ as $t\to -\infty$. Hence, for any sequence $\tau_k \to \infty$ there is a subseequence of time-translated Ricci flows which converge to some ancient Ricci flow. The Riemann curvature operator at the $S^1$ singular orbit vanishes as $t\to -\infty$ because of \eqref{Myer} and the $\ell(t)$ estimate in \eqref{ancientcrude}, so by the strong maximum principle, this limiting Ricci flow must be flat. Since $\lim_{t\to -\infty}\ell(t)=\infty$ and $\lim_{t\to -\infty}h(t)=1$, the resulting Ricci flow must be the flat product metric on $S^1\times \mathbb{R}^{n-1}$, and $S^1$ has lenght $2\pi$. Uniqueness of the resulting Ricci flow implies convergence as $\tau \to \infty$, and not just along subsequences. 
\end{proof}

Next we study the asymptotics in the vicinity of the ``tip'' (i.e.~the $S^{n-2}$ singular orbit). To that end, let $(\mathbb{R}^2, g_{\mathrm{cigar},\lambda})$ denote Hamilton's cigar Ricci flow of scale $\lambda$ (with girth equal to $2\pi \lambda$), see Appendix \ref{otherflows}.

\begin{lem}\label{cigarconvergence}
Let $p_{\sf tip}\in S^n$ be a fixed point in the ``tip'' region. There exists $\lambda\in (0,1]$ so that  the time-translated pointed Ricci flows $(S^n, g(t-\tau), p_{\sf tip})_{t\in (-\infty,\tau)}$ converge smoothly as $\tau \to \infty$ to the product Ricci flow $(\mathbb{R}^2, g_{\mathrm{cigar},\lambda}) \times (\mathbb{R}^{n-2}, g_{\rm{flat}})$, 
\end{lem}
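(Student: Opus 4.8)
The plan is to follow the scheme of the preceding (waist) lemma, the difference being that near the tip the limit contains a nontrivial cigar factor, so one needs rather precise control of all four sectional curvatures, and in particular a proof that the three ``perpendicular'' curvatures $K_1^\perp,K_2^\perp,L$ collapse near the tip while $K^\top$ stays at a fixed positive scale. I would begin with a \emph{global curvature bound}: by the trace Harnack inequality for ancient solutions \cite{HamiltonHarnackRicci} one has $\partial_t\Sc\ge 0$ along $(S^n,g(t))$, so $\Sc_{\max}$ is non-decreasing in $t$; since the flow is smooth on $(-\infty,0)$, $\Sc_{\max}(t)\le \Sc_{\max}(-2(n-1))$ for all $t\le -2(n-1)$, and positivity of the curvature operator then bounds $\Rm$ uniformly in the past. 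Shi's estimates provide uniform bounds on all covariant derivatives of $\Rm$, hence on all spatial and mixed space-time derivatives of the profile functions $\psi(s,t),\varphi(s,t)$ in the arc-length gauge, exactly as in the proof of Theorem \ref{mainexistence}.

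The crucial point is that, near the tip, $K_1^\perp,K_2^\perp,L\to 0$ as $t\to-\infty$. To see this, note $|\varphi_s|<1$ (from $L=(1-\varphi_s^2)/\varphi^2>0$), with $\varphi_s=1$ at the waist and $\varphi_s=0$ at the tip, so integrating $-\varphi_{ss}=K_1^\perp\varphi$ over $[0,\ell(t)]$ gives the identity $\int_0^{\ell(t)}K_1^\perp\varphi\,ds=1$. On $[\ell(t)-\delta,\ell(t)]$ one has $\varphi\ge d(t)-\delta$, while the uniform gradient bound $|(K_1^\perp)_s|\le C''$ gives $K_1^\perp\ge K_1^\perp|_{s=\ell(t)}-C''\delta$; choosing $\delta=\tfrac{1}{2C''}K_1^\perp|_{s=\ell(t)}$ yields $\big(K_1^\perp|_{s=\ell(t)}\big)^2\le \tfrac{8C''}{d(t)}\to 0$, since $d(t)\to\infty$ by \eqref{ancientcrude}. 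Because $K_1^\perp=K_2^\perp$ and $L\le K_1^\perp$ at the tip and $(K_1^\perp)_s,(K_2^\perp)_s\ge 0$ (Proposition \ref{prop:curvaturepreserved}), it follows that $K_1^\perp,K_2^\perp,L$ all tend to $0$ uniformly on every neighbourhood of the tip of bounded arc-length; only $K^\top$ survives.

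Next I would \emph{extract the limit}. Fix $p_{\sf tip}$ on the singular orbit $r^{-1}(\pi/2)$, set $\sigma=s-\ell(t)$, and for the time-translates $g_\tau(t):=g(t-\tau)$ put $\widetilde\psi_\tau(\sigma,t):=\psi(\ell(t-\tau)+\sigma,\,t-\tau)$, $\widetilde\varphi_\tau(\sigma,t):=\varphi(\ell(t-\tau)+\sigma,\,t-\tau)$. Step one gives uniform $C^\infty$ bounds on $\widetilde\psi_\tau$ (using $\widetilde\psi_\tau(0,t)=0$, $(\widetilde\psi_\tau)_\sigma(0,t)=-1$) and on all $\sigma$-derivatives of $\widetilde\varphi_\tau$ (though $\widetilde\varphi_\tau\to\infty$ since $d(t)\to\infty$); Arzel\`a--Ascoli and a diagonal argument give, along a subsequence $\tau_k\to\infty$, a smooth limit profile $\widetilde\psi_\infty$ on $\mathbb{R}\times\mathbb{R}$. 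The $S^{n-2}$-orbit through $p_{\sf tip}$, a round sphere of radius $d(t-\tau_k)\to\infty$, converges to $(\mathbb{R}^{n-2},g_{\rm flat})$, and by the previous step the warping curvatures $K_1^\perp,K_2^\perp,L$ vanish in the limit; passing to the limit in \eqref{eq:doubly warped Ricci flow} (using $\varphi_s/\varphi\to 0$ and $\varphi_s/\psi$ bounded) shows the pointed limit is the product Ricci flow $g^{(2)}_\infty(t)\times(\mathbb{R}^{n-2},g_{\rm flat})$, where $g^{(2)}_\infty=d\sigma^2+\widetilde\psi_\infty^2\,d\theta^2$ is a complete, eternal, rotationally symmetric $2$-dimensional Ricci flow on $\mathbb{R}^2$ with bounded, non-negative curvature.

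Finally I would \emph{identify the $2$-dimensional factor and upgrade to genuine convergence}. Since $\psi\le h(t)\le 1$ by \eqref{ancientcrude}, we have $\widetilde\psi_\infty\le 1$; as $\widetilde\psi_\infty(0,\cdot)=0$ but $\widetilde\psi_\infty>0$ for small $|\sigma|\ne 0$ (again by the curvature bound), $g^{(2)}_\infty$ is neither a flat cylinder nor the standard flat plane (whose profile would be the unbounded function $|\sigma|$), hence is non-flat; by the classification of complete ancient Ricci flows on surfaces \cite{DHSRicci,MR2264733,Chowetal} it is Hamilton's cigar soliton of some scale $\lambda=\lim_{\sigma\to-\infty}\widetilde\psi_\infty(\sigma,t)\in(0,1]$. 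Moreover $\Sc_{g(t)}(p_{\sf tip})$ is non-decreasing in $t$ (Harnack) and bounded, hence has a limit $S_\infty$ as $t\to-\infty$, and the above forces $S_\infty=\Sc_{g_{\mathrm{cigar},\lambda}}(\text{tip})>0$, which pins down $\lambda$ uniquely; therefore every subsequential pointed limit is the same cigar$\times\mathbb{R}^{n-2}$, so the convergence holds as $\tau\to\infty$ without passing to subsequences. I expect the second step to be the main obstacle: with Hamilton's tensor maximum principle unavailable, one must still control the competition between the perpendicular directions (which must collapse so that the $\mathbb{R}^{n-2}$-factor flattens) and the section (which must remain at a fixed positive scale), and the argument above rests on the exact identity $\int K_1^\perp\varphi\,ds=1$ together with the monotonicity $(K_1^\perp)_s\ge 0$ from Proposition \ref{prop:curvaturepreserved}.
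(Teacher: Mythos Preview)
Your proof is correct and follows the same overall strategy as the paper: Harnack for the curvature bound, $d(t)\to\infty$ to flatten the $S^{n-2}$ factor, $h(t)\le 1$ for $\lambda\le 1$, and monotonicity of $\Sc(p_{\sf tip},t)$ to pin down $\lambda$ and upgrade subsequential to full convergence.

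The one genuine methodological difference is in how you obtain the product splitting. The paper simply observes that since $d(t)\to\infty$, great circles in the $S^{n-2}$ orbit through $p_{\sf tip}$ converge to lines, and then invokes (implicitly) a splitting theorem for the limit to deduce the product structure $M^2\times\mathbb{R}^{n-2}$; it then identifies the two-dimensional factor as a cigar by appealing to the classification of rotationally symmetric ancient flows on $\mathbb{R}^2$. You instead give a direct quantitative argument that $K_1^\perp|_{s=\ell(t)}\to 0$ via the identity $\int_0^{\ell}K_1^\perp\varphi\,ds=1$ together with the Shi gradient bound, and then use the monotonicity $(K_1^\perp)_s,(K_2^\perp)_s\ge 0$ and $L\le K_1^\perp$ from Proposition~\ref{prop:curvaturepreserved} to propagate this to a neighbourhood of the tip. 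Your route is more hands-on and avoids invoking a splitting theorem, at the cost of relying on the derivative estimates and the curvature ordering; the paper's route is shorter but leans on more black-box geometry. Both are perfectly valid, and your argument has the merit of giving an explicit decay rate $K_1^\perp(\ell(t))=O(d(t)^{-1/2})$.
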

\begin{proof}
The Harnack inequality and curvature positivity again imply that the pointed Ricci flows $(g(t-\tau_k),p_{\sf tip})$ converge subsequentialy to another Ricci flow. The estimate for $d(t)$ in \eqref{ancientcrude} implies that the geodesics in $S^{n-2}$ converge to lines, and so the Ricci flow has the direct product form $g^{\top}+g_{\mathrm{flat}}$ on $M\oplus \mathbb{R}^{n-2}$ for some non-compact two-dimensional manifold $M$. Being rotationally-invariant, the ancient Ricci flow $g^{\top}$ must be a cigar soliton of a certain scale $\lambda$, a priori depending on the sequence. 
The estimate for $h(t)$ in \eqref{ancientcrude} implies that the girth $2\pi \lambda$ of this cigar soliton must be no greater than $2\pi$, so $\lambda\le 1$.  

Now, since $\partial_t \Sc(p_{\sf tip}, t) \geq 0$, the limit $\lim_{t\to -\infty} \Sc(p_{\sf tip},t)$ exists. Therefore the scalar curvature at the tip, and hence also the scale $\lambda$, of our limiting cigar soliton is independent of the sequence $(\tau_k)_k$. The uniqueness of the limiting ancient flow then implies that we obtain full convergence, no just along subsequences. 
\end{proof}

Ultimately, we would like to show that the asymptotic cigar has the right scale $\lambda = 1$, making it compatible with the asymptotic limit in the waist region.

\begin{prop}\label{prop:lambda=1}
The scale of the cigar factor in the asymptotic limit on the tip region (Lemma \ref{cigarconvergence}) is $\lambda=1$.
\end{prop}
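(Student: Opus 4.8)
The plan is to rule out $\lambda<1$ by a contradiction argument that exploits the global structure of the section $(\Sigma^2, g^\top(t))$ together with the already-established asymptotics at the waist. The key observation is that $2\ell(t)$ is the length of a closed geodesic in $(\Sigma^2, g^\top(t))$ passing through both singular orbits, and by \eqref{ancientcrude} this length behaves like $-t$ up to a bounded multiplicative factor; meanwhile, the girth of $(S^n, g(t))$ is realised by the waist circle, of length $2\pi h(t) \to 2\pi$. If the asymptotic cigar at the tip had scale $\lambda<1$, then near the tip the asymptotic cylinder would have circumference $2\pi\lambda<2\pi$, which is strictly smaller than the circumference of the asymptotic cylinder at the waist. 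I would quantify this discrepancy by tracking the behaviour of $\psi(s,t)$ as a function of arclength $s$ along the section: near $s=0$ it is approximately $h(t)\approx 1$, while near the tip $s=\ell(t)$ the function $\psi$ controls the $S^1$-radius, and cigar convergence at scale $\lambda$ forces $\psi(\ell(t)-\cdot, t)$ to converge, after translation, to the $\psi$-profile of the cigar, whose asymptotic value is $\lambda$.

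The mechanism I would use to propagate information from the waist to the tip is the concavity of $\psi$ in $s$ (from $K^\top>0$, as in the proof of Lemma~\ref{hlbound}) together with the monotonicity $(K^\top)_s\ge 0$ from Proposition~\ref{prop:curvaturepreserved}. Concavity and $\psi_s(0,t)=0$ give the sandwich \eqref{eqn_estimate_psi_h}; but I want a lower bound on $\psi$ near the \emph{tip} end, not just the crude linear one. The refined input is that, because $(K^\top)_s\ge 0$, the Gauss curvature is smallest at the waist, so $\psi$ is ``as flat as possible'' near $s=0$ and only bends appreciably near the tip; combined with the area lower bound $A(t)\ge -8\pi t$ from Lemma~\ref{areaexistencetime} and the upper bound $\ell(t)\le -8(n-1)t$, one controls how much area can be concentrated near the tip. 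If the $S^1$-radius at the tip scale were asymptotically $\lambda<1$, the geodesic from waist to tip would have to transition from radius $\approx 1$ down to radius $\approx\lambda$, and I would show this transition costs a definite amount of total curvature $\int (-\psi_{ss})\,ds = \psi_s(0,t)-\psi_s(\ell(t),t) = 1$ (using $\psi_s(\ell,t)=-\chi(\ell,t)$ from Proposition~\ref{prop:ch1mfsmoothness}), which is consistent — so the real constraint must come from comparing \emph{rates}: the cigar's $\psi$-profile decays exponentially to $\lambda$, and I would argue that the limiting picture at the tip, together with $h(t)\to 1$, forces the cigar's asymptotic radius to match $1$.

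Concretely, I expect the cleanest route is: pass to the limit flow at the tip (Lemma~\ref{cigarconvergence}), which is $(\R^2, g_{\mathrm{cigar},\lambda})\times\R^{n-2}$; in this limit the function $\psi$ becomes the rotational profile of the cigar, whose maximum (attained ``at infinity'' along the cigar) equals $\lambda$ by the normalisation in Appendix~\ref{otherflows}; on the other hand, along the original flow $\psi(s,t)\le h(t)$ for all $s$ by concavity, and in fact one can find points $s_k$ (interior, moving toward the tip region under the rescaling) where $\psi(s_k, t_k)\to 1$, because the waist value is $h(t)\to 1$ and $\psi$ is concave with vanishing derivative at the waist, so it stays close to $1$ on a long initial segment whose rescaled image exhausts the cigar. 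This would give $\lambda\ge 1$, and combined with $\lambda\le 1$ from Lemma~\ref{cigarconvergence} we conclude $\lambda=1$.

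The main obstacle, I expect, is making precise the claim that $\psi$ ``stays close to $h(t)$ on a long initial segment'' in a way that survives the parabolic rescaling centred at $p_{\sf tip}$ — i.e. ensuring that the segment on which $\psi\approx 1$ genuinely overlaps, after rescaling by the (bounded) curvature scale at the tip, with the region where cigar convergence holds. This requires a quantitative concavity estimate: from $\psi_{ss}=-\psi K^\top$ and $K^\top(s,t)\le K^\top(\ell(t),t)\le C\,\Sc_{\max}(t)$ (bounded near the tip by the Harnack argument of Lemma~\ref{cigarconvergence}), together with $K^\top(0,t)\le \pi^2/(4\ell(t)^2)\to 0$, one sees $\psi$ is nearly linear-with-zero-slope, hence nearly constant, on any fixed-length window near the waist and on increasingly long windows overall — and the window where $K^\top$ is already of order the tip scale is precisely the cigar region. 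Closing this overlap rigorously, rather than just heuristically, is the crux; everything else is bookkeeping with the estimates \eqref{ancientcrude} and the soliton normalisations.
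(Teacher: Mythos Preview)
Your proposal identifies the right heuristic --- that the cigar's asymptotic circle must match the waist circle --- but the mechanism you describe for propagating $\psi\approx 1$ from the waist into the cigar region does not close. Concavity of $\psi$ together with $(K^\top)_s\ge 0$ tells you only that $-\psi_s$ is non-decreasing in $s$ and that the transition from $-\psi_s\approx 0$ to $-\psi_s\approx 1$ is concentrated near the tip; it does \emph{not} tell you the transition occurs within bounded distance of the tip. The obstacle you flag as ``the crux'' is real and not merely bookkeeping: from cigar convergence you get $-\psi_s(\ell-u,t)\to\sech^2(u/\lambda)$ for each fixed $u$, and by monotonicity $h(t)-\psi(\ell-u,t)\le(\ell-u)\cdot(-\psi_s(\ell-u,t))$; but $\ell-u\sim -t\to\infty$, and without a quantitative \emph{rate} of cigar convergence (which the compactness argument does not supply) the right-hand side is uncontrolled. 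The crude estimates in \eqref{ancientcrude} cannot rescue this, since they allow $\ell$ and $A$ to differ from their sharp linear rates by a factor of $n-1$.

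The paper proceeds quite differently, through two sharpened \emph{integral} estimates. First, by integrating the contribution of the cigar region to $-\tfrac{d\ell}{dt}=\int_0^\ell \Rc_1\,ds$, one obtains $\ell(t)\ge -2\bar\lambda^{-1}t-C$ for every $\bar\lambda>\lambda$ (the point being that the cigar of scale $\lambda$ consumes length at rate $2/\lambda$). Second, by exploiting $K^\perp\le K^\top$, a partial Gauss--Bonnet argument, and the bound $\int_t^{-1}K^\perp(\tfrac\pi2,\tau)\,d\tau=O(\log(-t))$, one shows $A(t)\le -8\pi(1+\varepsilon)t+C(\varepsilon)$. A trapezoidal lower bound $A(t)\ge 2\pi(\ell-d_0)\big(\psi(0,t)+\psi(\ell-d_0,t)\big)$ then combines with these to yield $(1+\lambda)\le 2(1+\varepsilon)\bar\lambda$, and sending $\bar\lambda\downarrow\lambda$, $\varepsilon\downarrow 0$ gives $\lambda\ge 1$. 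The key idea you are missing is that one must compare the leading coefficients of $A(t)$ and $\ell(t)$ as $t\to-\infty$, and both require nontrivial refinements beyond \eqref{ancientcrude} --- the first using cigar convergence in an integrated (hence robust) way, the second using the curvature ordering \eqref{eq:Korder} to control the $K^\perp$ contribution to $A'$.
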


To prove this, we examine first how the scale $\lambda$ affects the geometry of the ancient solution. 
\begin{lem}\label{scalecigarlength}
For each $\bar{\lambda}>\lambda$, there exists $t_0 = t_0(\lambda, \bar\lambda)$ such that, for all $t < t_0$, we have the length estimate
\begin{equation}\label{eq:sharpish length estimate}
\ell(t)\ge-2\bar{\lambda}^{-1}t-C.
\end{equation}
where $C:= -2\bar{\lambda}^{-1}t_0-\ell(t_0)$.
\end{lem}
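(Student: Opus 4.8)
The plan is to use the smooth convergence, as $\tau\to\infty$, of the pointed Ricci flows $(S^n, g(t-\tau), p_{\sf tip})$ to the product of the scale-$\lambda$ cigar with flat $\mathbb{R}^{n-2}$ established in Lemma \ref{cigarconvergence}, and to exploit the fact that a cigar soliton of scale $\lambda$ has two ends which are asymptotic to a flat cylinder $S^1(2\pi\lambda)\times\mathbb{R}$. Geometrically, the point is that the half-geodesic $\gamma$ from the tip to the waist, of length $\ell(t)$, must eventually (after time-translating and passing to the limit) traverse the cigar end, which looks like a long thin tube of circumference $\approx 2\pi\lambda$. Since the area swept out along $\gamma$ is controlled by $A(t)$ via $2\pi h(t)\ell(t)\le A(t)$ (cf.~\eqref{halestimate}), and the relevant circumference near the tip is $\approx 2\pi\lambda$ rather than $2\pi h(t)\approx 2\pi$, one gets a better lower bound on $\ell(t)$ by partitioning $\gamma$ into the part near the tip (where $\psi\approx\lambda$, contributing area $\approx 2\pi\lambda\cdot(\text{its length})$) and the rest.

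More precisely, I would argue as follows. Fix $\bar\lambda>\lambda$. By the $C^\infty_{\mathrm{loc}}$ convergence of Lemma \ref{cigarconvergence} and the structure of the cigar, for every $R>0$ there is a $\tau_R$ such that, for $\tau\ge\tau_R$, the metric $g(-\tau)$ restricted to the ball of radius $R$ about $p_{\sf tip}$ is $\varepsilon$-close to the cigar-times-flat model; in particular, writing $g(-\tau)$ in the warped-product form \eqref{ch1mf} with arclength parameter $s$ measured from the tip $s=\ell$, we get $\psi(\ell-\rho,-\tau)\le\bar\lambda$ for all $\rho\in[0,R]$, once $\tau$ is large (this uses that the cigar's $\psi$-profile is bounded above by $\lambda<\bar\lambda$ near its tip, together with $\varepsilon$-closeness and a choice of $\varepsilon$ small relative to $\bar\lambda-\lambda$). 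Translating back, there is $t_0$ so that for all $t<t_0$ one has $\psi(s,t)\le\bar\lambda$ on the interval $s\in[\ell(t)-R,\ell(t)]$ — and in fact, by choosing $R=R(t)\to\infty$ appropriately (using that the cigar end is arbitrarily long) we may take the upper bound $\psi\le\bar\lambda$ to hold on an initial segment of $\gamma$ near the tip whose length $\to\infty$ as $t\to-\infty$.

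Granting that $\psi(s,t)\le\bar\lambda$ on $[\ell(t)-L(t),\ell(t)]$ with $L(t)\to\infty$, I would then estimate the area of the section $\Sigma^2$ from below by splitting the integral:
\[
A(t)=4\pi\int_0^{\ell(t)}\psi(s,t)\,ds \ge 4\pi\int_0^{\ell(t)-L(t)}\psi(s,t)\,ds,
\]
and combining with $\psi\ge h(t)(1-s/\ell(t))$ from \eqref{eqn_estimate_psi_h} together with $h(t)\to1$; alternatively, and more cleanly, I would run the argument at the level of $\ell$: along the near-tip segment the ``width'' is at most $\bar\lambda$, so the contribution of that segment to $A(t)$ is at most $4\pi\bar\lambda L(t)$ — this is the wrong direction, so instead one uses the complementary estimate, that $2\pi h(t)(\ell(t)-L(t))\le A(t)\le -8\pi(n-1)t$ is \emph{too weak}; the correct move is to bound $A(t)$ from \emph{below} using that near the waist $\psi\approx1$ over a segment of length $\ell(t)-L(t)$, giving $A(t)\gtrsim 2\pi(\ell(t)-L(t))$, and to bound $A(t)$ from \emph{above} by $-8\pi(n-1)t$; that still only recovers the crude estimate. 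The genuinely sharp input must instead come from the Harnack/monotonicity at the tip: $\partial_t\Sc(p_{\sf tip},t)\ge0$ pins the asymptotic cigar scale, and then the rate $d'\le-2/\ell$ from Lemma \ref{dbound}, \emph{improved} to $d'\le -2\bar\lambda^{-1}\cdot(\text{tube length rate})$ using that near the tip $K_1^\perp\le K_1^\perp|_{r=\pi/2}$ and $\psi\le\bar\lambda$ there, yields, after integrating from $t_0$ to $t$, the stated bound with $C=-2\bar\lambda^{-1}t_0-\ell(t_0)$.

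\textbf{Main obstacle.} The crux is making the heuristic ``the cigar end of circumference $2\pi\bar\lambda$ forces $\ell(t)$ to grow at rate $\ge 2\bar\lambda^{-1}(-t)$'' into a rigorous estimate valid for \emph{all} $t<t_0$, not merely asymptotically: the convergence in Lemma \ref{cigarconvergence} is only $C^\infty_{\mathrm{loc}}$ about a \emph{fixed} basepoint $p_{\sf tip}$, so a priori it controls the geometry only on bounded-size neighbourhoods of the tip, whereas \eqref{eq:sharpish length estimate} is a statement about the full segment $\gamma$. Bridging this gap — quantifying how large a piece of the cigar end one can see at time $t$, and converting the resulting local circumference bound $\psi\le\bar\lambda$ into a differential inequality for $\ell(t)$ (or for $d(t)$, via \eqref{eq:doubly warped Ricci flow phi} and l'Hôpital at the tip, as in Lemma \ref{dbound}) that integrates to the claimed linear-in-$t$ lower bound — is where the real work lies. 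I expect the cleanest route is: (i) use $\partial_t\Sc\ge0$ to fix $\lambda$; (ii) use $C^\infty_{\mathrm{loc}}$ convergence to get, for each fixed large $R$, an upper bound $\psi\le\bar\lambda$ on the last arclength-$R$ of $\gamma$ once $-t$ is large; (iii) feed this into the evolution of $d(t)$ to improve $d'\le-2/\ell$ to an estimate whose integration, together with the crude bounds \eqref{ancientcrude}, closes the loop and yields \eqref{eq:sharpish length estimate}.
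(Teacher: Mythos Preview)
Your proposal circles around the right convergence input but never lands on the correct differential inequality. The paper's proof works directly with the evolution of $\ell$: from \eqref{eq:doubly warped Ricci flow chi} one has
\[
-\frac{d\ell}{dt}=\int_0^{\ell(t)}\Rc_1(s,t)\,ds,
\]
where $\Rc_1=K^\top+(n-2)K_1^\perp$. Since $\Rc_1>0$, one may throw away everything except a \emph{fixed} arclength-$\delta$ neighborhood of the tip. On that neighborhood, $C^\infty_{\mathrm{loc}}$ convergence to the cigar $\times\,\R^{n-2}$ gives $\Rc_1(s,t)\ge K_{\mathrm{cigar},\lambda}(\ell-s)-\varepsilon$ for all $t<t_0(\delta,\varepsilon)$ (the flat $\R^{n-2}$ factor contributes nothing). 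The explicit integral $\int_0^\delta K_{\mathrm{cigar},\lambda}(u)\,du=\tfrac{2}{\lambda}\tanh(\delta/\lambda)$ can be made $\ge 2/\bar\lambda$ simply by choosing $\delta$ large and then $\varepsilon$ small, both \emph{independent of $t$}. This yields $-\ell'(t)\ge 2/\bar\lambda$, and integrating on $[t,t_0]$ gives \eqref{eq:sharpish length estimate}.

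Your attempts via area splitting and the evolution of $d$ do not close: the area route, as you already noticed, only recovers the crude bound; and the evolution of $d$ in Lemma~\ref{dbound} gives an inequality for $d'$ in terms of $\ell$, not the other way around, so ``improving $d'\le -2/\ell$'' cannot produce a lower bound on $\ell$. More importantly, your ``main obstacle'' is a non-issue: you do \emph{not} need $\psi\le\bar\lambda$ on a segment of length $R(t)\to\infty$; a fixed $\delta$ suffices, because $\tanh(\delta/\lambda)\to 1$ as $\delta\to\infty$ already captures almost all of the cigar's total curvature $2/\lambda$. The entire argument lives on a bounded ball about $p_{\sf tip}$, which is exactly what $C^\infty_{\mathrm{loc}}$ convergence controls.
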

\begin{proof}
In order to estimate the time derivative of $\ell$, we introduce the time-independent parameters $\delta = \delta(\lambda, \bar \lambda) > 0$, $\varepsilon = \varepsilon(\delta, \lambda, \bar \lambda) > 0$, $t_0 = t_0 (\delta, \varepsilon) < 0$. Given $\varepsilon$ and $\delta$, we choose $t_0$ small enough so that 
\[
		\Rc_1(s,t) \geq K_{\mathrm{cigar},\lambda}(\ell-s) - \varepsilon
\]
for all $s \in [\ell-\delta, \ell]$ and all $t < t_0$; this is possible because of the backwards time convergence established in Lemma \ref{cigarconvergence}. We compute: 
\ba
	- \frac{d\ell}{dt} =& \int_0^{\ell} \Rc_1(s,t) ds   \geq \int_{\ell-\delta}^\ell \Rc_1(s,t) ds  \nonumber\\ 
		\geq& \int_{0}^\delta K_{\mathrm{cigar},\lambda}(u) du - \varepsilon \delta  = \frac{2}{\lambda} \tanh(\delta/\lambda) - \varepsilon \delta   \nonumber \\
		\geq& \frac{2}{\lambda} (1 - 2 e^{-2\delta/\lambda}) - \varepsilon \delta  \geq \frac{2}{\bar \lambda}, \nonumber
\ea
where $u$ denotes the arc-length parameter of the cigar geodesic (scale $\lambda$), and in the last step we chose first $\delta$ and then $\varepsilon$ so that
\[
	\frac{4}{\lambda} e^{-2\delta/\lambda} = \frac1{\lambda} - \frac1{\bar\lambda} = \varepsilon \delta .
\]
The lemma follows by integrating the above estimate on $[t,t_0]$.
\end{proof}

On the other hand, we obtain the following upper bound for the area.
\begin{lem}\label{areaupper}
For each small $\varepsilon>0$, there exists  $t_0<0$ and $C(\varepsilon)>0$ such that
\ba
\frac{A(t)}{2}
\le  -4\pi(1+\varepsilon)t+C(\varepsilon)\,  \qquad \hbox{for all } t\leq t_0.\label{eq:sharpish area estimate}
\ea
\end{lem}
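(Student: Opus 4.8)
The plan is to derive the area upper bound by integrating the evolution equation for $A(t)$, but now using \emph{sharper} pointwise lower bounds for the sectional curvatures coming from the asymptotic analysis, rather than the crude Gauss--Bonnet bound used in Lemma \ref{areaexistencetime}. Recall from the proof of Lemma \ref{areaexistencetime} that
\[
	A'(t) = -\int_{\Sigma^2} \left( 2K^\top + (n-2)(K_1^\perp + K_2^\perp) \right) d\mu_{g^\top}.
\]
The term $-\int_{\Sigma^2} 2 K^\top d\mu_{g^\top} = -8\pi$ exactly, by Gauss--Bonnet (since $\chi(S^2)=2$). So the task reduces to showing that $\int_{\Sigma^2} (K_1^\perp + K_2^\perp)\, d\mu_{g^\top}$ is small — more precisely, that it is $O(\varepsilon) + o(1)$ as $t\to-\infty$ in an appropriate integrated sense — so that $A'(t) \geq -8\pi(1+\varepsilon)$ for $t$ sufficiently negative, and then integrate backwards from $t_0$ using $\lim_{t\to 0} A(t) = 0$ (or rather, integrate the differential inequality on $[t,t_0]$ and absorb $A(t_0)$ into the constant $C(\varepsilon)$).

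The key step is therefore to control $\int_{\Sigma^2} (K_1^\perp + K_2^\perp)\, d\mu_{g^\top} = 2\pi\int_0^{\ell(t)}(K_1^\perp + K_2^\perp)\psi\, ds$. Here I would split the geodesic $[0,\ell(t)]$ into three regions: a bounded-size waist region near $s=0$, a bounded-size tip region near $s = \ell(t)$, and the long ``neck'' region in between. In the neck region, the asymptotic convergence to the flat cylinder $S^1(2\pi)\times\mathbb{R}^{n-1}$ established in Lemma \ref{cigarconvergence}'s waist-analogue (the preceding lemma) forces $K_1^\perp, K_2^\perp$ to be uniformly small for $t$ very negative; combined with $\psi \leq 1$ and $\ell(t) \leq -8(n-1)t$, this contributes at most $\varepsilon\cdot(\text{const})\cdot(-t)$. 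In the tip region, $\varphi \leq d(t)$ and $\psi\leq 1$, but $\psi$ is in fact exponentially small near the tip (since $\psi(\ell,t)=0$ and $\psi_s(\ell,t)=-\ldots$, and $\psi$ decays like the cigar profile), so $\int \psi\, ds$ over the tip region is bounded; with $K_1^\perp, K_2^\perp$ bounded (by the uniform scalar curvature bound at the tip), this contributes only $O(1)$. The waist region contributes $O(1)$ similarly. Summing, $A'(t)\geq -8\pi - C\varepsilon(-t) - C$; to get the clean statement one would instead argue that $\int_0^{\ell}(K_1^\perp+K_2^\perp)\psi\,ds \leq \varepsilon$ for $t\leq t_0(\varepsilon)$ by making the neck contribution genuinely small (shrinking $\varepsilon$ in the cylinder-convergence), giving $A'(t)\geq -8\pi(1+\varepsilon)$ directly.

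A cleaner alternative, which I would actually pursue, avoids the delicate region-splitting: use $K^\top \geq K_2^\perp \geq K_1^\perp$ from Proposition \ref{lem:curvaturepreserved}, so $K_1^\perp + K_2^\perp \leq 2K_2^\perp \leq 2K^\top$, hence naively $A'(t) \geq -8\pi - 2(n-2)\cdot 8\pi$; that's just the old bound. Instead, observe that $\int_{\Sigma^2} K_2^\perp\, d\mu_{g^\top} = 2\pi\int_0^\ell \frac{-\psi_s\varphi_s}{\varphi}\psi\,ds$ and, since $\varphi_s \in [0,1]$ decreases from $1$ to $\ldots$ and $\tfrac{\varphi_s}{\varphi}$ is essentially the cigar quantity near the tip while $\tfrac{\psi_s}{\psi}\to 0$ away from the tip, one integrates by parts to extract a clean bound. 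Concretely, $\int_0^\ell (-\psi_s)\varphi_s \tfrac{\psi}{\varphi}\,ds$ — using $\psi/\varphi \to 0$ in the neck (as $\psi \leq 1$ and $\varphi = d(t) \sim \delta\log(-t) \to \infty$ by Lemma \ref{dbound}) and that $\psi\leq 1$, $\varphi_s\leq 1$, $\int_0^\ell(-\psi_s)\,ds = \psi(0)-\psi(\ell) \leq 1$ — should give that this integral is $o(1)$. Similarly $\int_0^\ell K_1^\perp \psi\,ds = \int_0^\ell \varphi K_1^\perp \tfrac{\psi}{\varphi}\,ds = \int_0^\ell(-\varphi_{ss})\tfrac\psi\varphi\,ds$, and integrating by parts against $-\varphi_{ss}\,ds = d(\varphi_s)$ with the small factor $\psi/\varphi$ and its derivative again yields $o(1)$ using Lemma \ref{dbound} and the uniform curvature bounds.

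\textbf{Main obstacle.} The technical heart is making the $o(1)$ (or $O(\varepsilon)$) control of $\int_0^{\ell(t)}(K_1^\perp+K_2^\perp)\psi\,ds$ \emph{uniform over the whole geodesic} as $t\to-\infty$: the asymptotic convergence statements (preceding lemma and Lemma \ref{cigarconvergence}) are local (pointed, on compact sets), whereas here one integrates over a geodesic of length $\sim -t\to\infty$, so one must upgrade pointed convergence to an integrated/uniform estimate along the neck. I expect this to require an independent a priori decay estimate for $K_1^\perp, K_2^\perp$ away from the tip — perhaps via the fact that $\tfrac{\varphi_s}{\varphi} \leq \tfrac{1}{\varphi} \leq \tfrac{1}{d(t)} \to 0$ and $\tfrac{\psi_s}{\psi} = \tfrac{|\psi_s|}{\psi}$ is controlled by $K^\top$ and the concavity of $\psi$ (giving $\tfrac{-\psi_s}{\psi}\leq \tfrac{1}{\ell-s}$ type bounds) — rather than purely from the Arzelà--Ascoli limits. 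The parameters $t_0$ and $C(\varepsilon)$ in the statement are then fixed by: $t_0$ small enough that the neck contribution is $<\varepsilon$, and $C(\varepsilon) := A(t_0)/2 + 4\pi(1+\varepsilon)t_0$.
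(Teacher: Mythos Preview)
Your overall framework coincides with the paper's: write $A'(t)=-\int_{\Sigma^2}\big(2K^\top+(n-2)(K_1^\perp+K_2^\perp)\big)\,d\mu_{g^\top}$, extract $-8\pi$ from the $K^\top$ term by Gauss--Bonnet, and control the remaining $K_i^\perp$ contribution. The gap is that neither of your two proposed methods for this last step actually works as stated.

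Your first method (waist/neck/tip splitting based on pointed convergence) fails exactly where you anticipate in your ``main obstacle'' paragraph: pointed convergence to the flat cylinder gives $K_i^\perp\leq\varepsilon_1$ only on \emph{compact} $s$-intervals, not on the whole neck of length $\sim -t$. The neck contribution is then $O(\varepsilon_1\cdot(-t))$, which cannot be made $\leq\varepsilon$ uniformly in $t$; the inequality $A'(t)\geq -8\pi-C\varepsilon(-t)$ you arrive at integrates to a \emph{quadratic} bound on $A$, not the claimed linear one. Your second method (integration by parts) contains errors: $\varphi$ ranges from $0$ at the waist to $d(t)$ at the tip, so $\tfrac{1}{\varphi}\geq\tfrac{1}{d(t)}$ (not $\leq$), and $\psi/\varphi$ is not small near the waist; in fact $\tfrac{\varphi_s}{\varphi}$ blows up as $s\to 0$.

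The paper's argument differs in two essential respects. First, it integrates in time before estimating, writing $\tfrac{A(t)}{2}=-4\pi t+(n-2)\int_t^0\int_{\Sigma^2}K^\perp\,d\mu\,d\tau$ (with $K^\perp:=\tfrac12(K_1^\perp+K_2^\perp)$). Second --- and this is the idea you are missing --- the spatial splitting is based not on distance to the singular orbits but on the value of $-\psi_s$: fix a small $\rho_0>0$ and let $s_0=s_0(t)$ satisfy $-\psi_s(s_0)=\sin\rho_0$. On $\{s<s_0\}$ one uses $K^\perp\leq K^\top$ together with the exact ``partial Gauss--Bonnet'' identity $\int_{\{s<s_0\}}K^\top\,d\mu_{g^\top}=4\pi\sin\rho_0$ (since $2\pi\psi K^\top=-2\pi\psi_{ss}$). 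On $\{s\geq s_0\}$, concavity of $\psi$ forces $\ell-s_0\leq 1/\sin\rho_0$, so this region has \emph{uniformly bounded area independent of $t$}; there one uses $K^\perp\leq K^\perp(\tfrac{\pi}{2},t)$ by monotonicity, together with the time-integrated bound $\int_t^{-1}K^\perp|_{\mathrm{tip}}\,d\tau=O(\log(-t))$, which follows from $-\tfrac{d}{dt}\log d(t)\geq 2K^\perp|_{\mathrm{tip}}$ and $d(t)\leq\ell(t)=O(-t)$. Choosing $\rho_0$ small makes the linear-in-$t$ coefficient at most $4\pi(1+\varepsilon)$, and the $O(\log(-t))$ term is absorbed into $C(\varepsilon)$ since $\log(-t)=o(-t)$.
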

\begin{proof}
Observe that
\begin{align*}
\frac{A(t)}{2}
&=\int_t^0- \frac{A'(\tau)}{2}d\tau\\
&=\int_t^0 \int_{\Sigma^2} (K^{\top}+(n-2)K^{\perp} ) \, d\mu_{g^\top} d\tau\\
&=-4\pi t+(n-2)\int_t^0 \int_{\Sigma^2} K^{\perp} d\mu_{g^\top} d\tau. 
\end{align*}

To deal with the second term, first observe that   by \eqref{initialcurvature} we have
\[
		\int_{-1}^0 \int_{\Sigma^2} K^\perp d \mu_{g^\top} d\tau \leq \int_{-1}^0 \int_{\Sigma^2} K^\top d\mu_{g^\top} d\tau = 4\pi.
\]
Regarding the interval $(t,-1)$, we use the estimate
\begin{equation}\label{eq:int Kperp dt estimate}
\int_t^{-1}K^\perp(\tfrac{\pi}{2},\tau)\,d\tau\le O(\log(-t))\;\;\text{as}\;\;t\to-\infty,
\end{equation}
which follows from the estimates 
\[
-\frac{d}{dt}\log d(t)\ge 2K^\perp|_{r=\frac{\pi}{2}}\;\;\text{and}\;\; d\le O(-t)\,.
\]
This is enough to proceed analogously to \cite{BLT1} to obtain \eqref{eq:sharpish area estimate}. To that end, we introduce the coordinate $\rho = \rho(s,t) := \arcsin(-\psi_s(s))\in (0,\pi/2)$
(recall that $\psi_s$ takes values on $(-1,0)$). For a fixed $\rho_0 >0$ we also consider $s_0 = s_0(\rho_0,t)$ defined so that $\rho(s_0, t) = \rho_0$, i.e.
\[
	\sin \rho_0 = -\psi_s(s_0).
\] 
This yields a ``partial Gauss--Bonnet theorem'' on $\Sigma^2_{<s_0} := \{ s < s_0 \} \subset \Sigma^2$:
\begin{equation} \label{eqn:partial_GB}
	\int_{\Sigma_{<s_0}^2} K^\top d \mu_{g^\top} = 2 \int_0^{s_0}  2 \pi \psi K^\top ds = - 4\pi \int_0^{s_0} \psi_{ss} ds =  4 \pi \sin(\rho_0).
\end{equation}
Observe also that, since $\rho_0$ is fixed, $\ell - s_0$ stays uniformly bounded independent of $t$. Indeed, using that $-\psi_s$ is non-decreasing we have
\[
	\sin(\rho_0) (\ell - s_0) \leq \int_{s_0}^\ell -\psi_s(s) ds = \psi(s_0) \leq 1.
\]
This implies the uniform-in-time area estimate
\begin{equation}\label{eqn:uniform area estimate geq s_0}
	\int_{\Sigma^2_{\geq s_0}}  d\mu_{g^\top} = 2 \int_{s_0}^\ell 2\pi \psi ds \leq 4\pi (\ell-s_0) \leq \frac{4\pi}{\sin(\rho_0)}.
\end{equation}

Thus, for each  $t<0$ we split $\Sigma^2 = \Sigma^2_{<s_0} \cup \Sigma^2_{\geq s_0}$ and  use \eqref{initialcurvature}, \eqref{eqn:partial_GB} and \eqref{eqn:uniform area estimate geq s_0} to obtain
\begin{align}\label{eqn:estimate int Kperp}
	\int_{\Sigma^2} K^\perp(t) d \mu_{g^\top} \leq&   \,\,    \int_{\Sigma^2_{<s_0}} K^\top d\mu_{g^\top}  + K^\perp(\tfrac{\pi}{2}, t) \, \int_{\Sigma^2_{\geq s_0}}  d\mu_{g^\top} \\
		\leq& \, \, 4\pi \sin(\rho_0)    +  K^\perp(\tfrac{\pi}{2}, t) \, \frac{4\pi}{\sin(\rho_0)}. \nonumber
\end{align}
Finally, integrating \eqref{eqn:estimate int Kperp} and using \eqref{eq:int Kperp dt estimate} we immediately get \eqref{eq:sharpish area estimate} after choosing $\rho_0>0$ small enough.
\end{proof}

\begin{proof}[Proof of Proposition \ref{prop:lambda=1}]
Suppose, to the contrary, that $\lambda<1$. 
For each $t<0$ and $d_0\in (0,\ell(t))$, curvature positivity implies
\begin{align*}
\psi(s,t)\ge \frac{\left(\ell(t)-d_0-s\right)\psi(0,t)+s\psi(\ell(t)-d_0,t)}{\ell(t)-d_0}
\end{align*}
for all $s\in (0,\ell(t)-d_0)$. 
Therefore, we can estimate the $g^{\top}$ area of $S^2$ with 
\begin{align*}
\frac{A(t)}{2}&=2\pi \int_{0}^{\ell(t)}\psi(s,t)ds\\
&\ge 2\pi \int_{0}^{\ell(t)-d_0}\psi(s,t)ds\\
&\ge \pi (\ell(t)-d_0)\left(\psi(\ell(t)-d_0,t)+\psi(0,t)\right).
\end{align*}
Lemmas \ref{scalecigarlength} and \ref{areaupper} then imply that for each $\varepsilon>0$ and $\bar{\lambda}\in (\lambda,1)$, there is a $t_0,C_1,C_2$ so that for all $t\le t_0$, we have 
\begin{align*}
\left(2\bar{\lambda}^{-1}+ \frac{C_1-d_0}{-t}\right)\left(\psi(\ell(t)-d_0,t)+\psi(0,t)\right)&\le \frac{(\ell(t)-d_0)}{-t}\left(\psi(\ell(t)-d_0,t)+\psi(0,t)\right) \\
&\le \frac{A(t)}{-2\pi t}\\
&\le 4 (1+\varepsilon)+\frac{C_2}{-t}.
\end{align*}
But this is impossible, because $\lim_{d_0\to \infty}\left(\lim_{t\to -\infty}\psi(\ell(t)-d_0,t)+\psi(0,t)\right)=\lambda+1$. 
\end{proof}

Finally, we treat asymptotics of points that have neither bounded distance to the $S^1$ singular orbit, nor to the $S^{n-2}$ singular orbit. 
\begin{lem}
For any sequence of times $\{t_k\}_{k=1}^{\infty}$ with $\lim_{k\to \infty}t_k= -\infty$, and any sequence of points $\{p_k\}_{k=1}^{\infty}$ with unbounded $g(t_k)$ distance to the $S^{n-2}$ and $S^1$-singular orbits, the time translated pointed Ricci flows $(S^n,g(t-t_k),p_k)_{t\in(-\infty,-t_k)}$ converge to the stationary flat cylinder $S^1\times \mathbb{R}^{n-1}$ (of unit radius). 
\end{lem}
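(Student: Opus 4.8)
The plan is to work with the warped-product data $\chi,\psi,\varphi$ along the normal geodesic $\gamma$, show that near $p_k$ all four sectional curvatures tend to $0$ while $\psi\to1$ and $\varphi\to\infty$, and then extract a Cheeger--Gromov limit and recognise it as the flat cylinder. First I would normalise: using $O(2)\times O(n-1)$-invariance we may assume each $p_k=\gamma(r_k)$ lies on $\gamma$, with arclength coordinate $s_k:=\int_0^{r_k}\chi(\cdot,t_k)$ and $\ell_k:=\ell(t_k)$. Since $\gamma$ furnishes a path of length $s_k$ from $p_k$ to the waist and one of length $\ell_k-s_k$ to the tip, the hypothesis forces $s_k\to\infty$ and $\ell_k-s_k\to\infty$. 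By curvature positivity and Hamilton's trace Harnack inequality, $\partial_t\Sc\ge0$ on our ancient solution, so for every fixed $t_\ast<0$ the curvature is uniformly bounded on $(-\infty,t_\ast]\times S^n$, and $g(t)$-distances there are non-increasing in $t$ and move at a bounded rate; these will be the compactness inputs.

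The crux, and the step I expect to be most delicate, is to show $K^\top(p_k,t_k)\to0$. I would deduce this by combining three ingredients: (i) the Gauss--Bonnet identity $\int_0^{\ell_k}\psi K^\top\,ds=1$ on the totally geodesic sphere $(\Sigma^2,g^\top(t_k))\cong S^2$ (whose area element is $\psi\,ds\,d\theta$ on each hemisphere $\{0\le s\le\ell_k\}\times S^1$); (ii) the monotonicity $K^\top_s\ge0$ of Proposition~\ref{prop:curvaturepreserved}, so $K^\top(\cdot,t_k)$ is non-decreasing; and (iii) a uniform ``long cylindrical neck'' bound $\psi(s,t_k)\ge1-\varepsilon$ for all $s\in[0,\ell_k-u_\varepsilon]$ and all $k$ large, where $u_\varepsilon$ depends only on $\varepsilon$. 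Granting these, and using that $\psi(\cdot,t)$ is concave and non-increasing ($\psi_s(0,\cdot)=0$, $\psi_{ss}=-\psi K^\top<0$), the estimate $1\ge\int_{s_k}^{\ell_k-u_\varepsilon}\psi K^\top\,ds\ge K^\top(s_k,t_k)(1-\varepsilon)(\ell_k-u_\varepsilon-s_k)$ gives $K^\top(s_k,t_k)\to0$; the ordering $K^\top\ge K_2^\perp\ge K_1^\perp\ge L>0$ (Proposition~\ref{lem:curvaturepreserved}) then propagates this to all four sectional curvatures, and hence to $\Sc$, at $(p_k,t_k)$. The obstacle is (iii): I would obtain it from the backward asymptotics already established — Lemma~\ref{cigarconvergence} together with Proposition~\ref{prop:lambda=1} shows the tip region converges as $t\to-\infty$ to the product of the scale-one cigar with $\R^{n-2}$, whose circle-radius function is $\tanh$ of the distance to the tip; thus with $u_\varepsilon:=\arctanh(1-\varepsilon)+1$ we get $\psi(\ell(t)-u_\varepsilon,t)\to\tanh(u_\varepsilon)>1-\varepsilon$, and $\psi(s,t)\ge\psi(\ell(t)-u_\varepsilon,t)$ for $s\le\ell(t)-u_\varepsilon$ by $s$-monotonicity. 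This is the one place the identification $\lambda=1$ is indispensable.

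Along the way I would also record $\psi(s_k,t_k)\to1$ (squeeze (iii) against $\psi\le h\to1$) and $\varphi(s_k,t_k)\to\infty$: if not, then $\varphi(s_k,t_k)\le M$ along a subsequence, so $L=(1-\varphi_s^2)/\varphi^2\to0$ forces $\varphi_s(s_k,t_k)\to1$; since $\varphi_s(\cdot,t)$ is non-increasing with $\varphi_s(0,\cdot)=1$, this makes $\varphi_s(\cdot,t_k)\to1$ uniformly on $[0,s_k]$, whence $\varphi(s_k,t_k)\ge\tfrac12 s_k\to\infty$, a contradiction. Next I would propagate curvature decay to a spacetime neighbourhood: for fixed $R$, the bounded distortion of distances on $(-\infty,t_\ast]$ keeps $p_k$ at unbounded $g(t_k+t)$-distance from both singular orbits for all $t\in[-R,R]$, so rerunning the estimate at time $t_k+R$ gives $\Sc(p_k,t_k+R)\to0$; then $\partial_t\Sc\ge0$ and curvature positivity bound $\Rm$ above on $[t_k-R,t_k+R]$, while applying the spatial estimate on arclength windows $[s_k-R,s_k+R]$ shows all curvatures tend to $0$ uniformly on $B_{g(t_k)}(p_k,R)\times[t_k-R,t_k+R]$, for every $R$.

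Finally, since $\psi(s_k,t_k)\to1>0$ and $\varphi(s_k,t_k)\to\infty$ with curvature bounded, the injectivity radius $\inj_{g(t_k)}(p_k)$ is bounded below (the shortest loop through $p_k$ is, up to $o(1)$, the $O(2)$-orbit circle of length $2\pi\psi(s_k,t_k)\to2\pi$, the $O(n-1)$-orbit loops having length $\gtrsim\varphi(s_k,t_k)\to\infty$, and the conjugacy radius is controlled by the curvature bound), so Hamilton's compactness theorem extracts a subsequential smooth Cheeger--Gromov limit, which by the previous paragraph is a static flat Ricci flow. The surviving $O(2)$-symmetry gives a free isometric circle action whose orbit through the base point is a closed geodesic of length $2\pi$; the $O(n-1)$-orbits, round spheres of radius $\varphi\to\infty$, open up to a flat $\R^{n-2}$; and $\gamma$ contributes an orthogonal $\R$-factor. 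Hence the limit is the stationary flat cylinder $S^1\times\R^{n-1}$ with $S^1$ of circumference $2\pi$, i.e.\ unit radius. Since every subsequential limit coincides with this one, the convergence holds along the whole sequence.
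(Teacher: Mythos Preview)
Your argument is correct, and in fact more carefully justified than the paper's own proof. The paper takes a more direct route, working entirely at the level of the warping functions: recentering the arclength parameter at $p_k$, it asserts that $\psi_k\to1$ uniformly on compact $s$-sets (from $\psi_s\le0$, $h(t_k)\to1$, and the scale-$1$ cigar asymptotics at the tip --- essentially your ingredient~(iii)) and that $1/\varphi_k\to0$ uniformly on compact $s$-sets (from $\varphi_s\ge0$, $\varphi_{ss}\le0$ and $d_k\to\infty$), and then simply reads off the limiting warped product.

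Your route differs in that you interpose a curvature-decay step: you first prove $K^\top(p_k,t_k)\to0$ via the half-Gauss--Bonnet identity $\int_0^{\ell_k}\psi K^\top\,ds=1$ together with the monotonicity $K^\top_s\ge0$ and your neck bound~(iii), then cascade to $L\to0$ by the curvature ordering, and only then deduce $\varphi(s_k,t_k)\to\infty$ via the clean contradiction ``$L\to0$ and $\varphi$ bounded force $\varphi_s\to1$ on $[0,s_k]$, hence $\varphi(s_k,t_k)\gtrsim s_k$''. This buys you a rigorous justification of $\varphi\to\infty$ that the paper's one-line concavity argument does not obviously provide: concavity, $\varphi_s\in[0,1]$, and $d_k\to\infty$ alone do \emph{not} force $\varphi_k(0)\to\infty$ when $s_k/\ell_k\to0$ (the chord bound only gives $\varphi_k(0)\ge s_k d_k/\ell_k$). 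Your argument also makes the compactness step explicit (Harnack, injectivity radius via shortest loop $\approx2\pi$, Hamilton's theorem), which the paper leaves implicit in ``the convergence of the $\psi_k$ and $\varphi_k$ functions give us the required geometric convergence''. Both approaches depend essentially on Proposition~\ref{prop:lambda=1}; you correctly identify this as the indispensable input.
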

\begin{proof}
Consider the arc-length parametrisation of the functions $\psi_k$ and $\varphi_k$, recentered so that $s=0$ corresponds to $p_k$. Since the distance to both singular orbits is increasing without bound, $(\varphi_k)_s\ge 0$, $(\varphi_k)_{ss}\le 0$ and $\varphi_k$ is also increasing without bound at the $S^{n-2}$ singular orbit, we conclude that $\frac{1}{\varphi_k}$ converges to $0$ uniformly on compact subsets (of the $s$ variable).  
Similarly, since $(\psi_k)_s\le 0$, $\psi_k$ is converging to $1$ at the singular orbits, and the girth of the cigar solitons forming near the $S^{n-2}$ orbit is $2\pi$, we conclude that $\psi_k$ must be converging to $1$ uniformly on compact subsets. The convergence of the $\psi_k$ and $\varphi_k$ functions give us the required geometric convergence. 
\end{proof}

\appendix

\section{$O(2)\times O(n-1)$-invariant Riemannian metrics on $S^n$}\label{geometry}

The purpose of this appendix is to collect a number of important, but computationally-intensive observations regarding the geometry of the Riemannian metrics on ${S}^{n}$ which have the form \eqref{ch1mf}.

\subsection{Orthonormal frames for doubly warped product metrics}\label{oframes}
To begin, observe that there is an $O(2)\times O(n-1)$-invariant diffeomorphism from the principal part of $S^{n}$ and $(0,\frac{\pi}{2})\times S^1\times S^{n-2}$, on which the Riemannian metric takes the form 
\begin{align*}
\chi(r)^2 dr^2+\psi(r)^2 d\theta^2+\varphi(r)^2 g_{S^{n-2}},
\end{align*}
where $\theta$ is the usual parameter for ${S}^1$, and $g_{{S}^{n-2}}$ is the round metric of radius $1$ on ${S}^{n-2}$. This doubly warped product structure allows us to compute many of the important geometric terms relevant to this paper. 

We start by defining $\tilde{e}_1=\partial_r$, the vector field evolving the natural co-ordinate on $(0,\frac{\pi}{2})$, $\tilde{e}_2=\partial_{\theta}$, the vector field evolving the natural co-ordinate on ${S}^1$, and a frame of vector fields $\{\tilde{e}_k\}_{k=3}^{n}$ which are orthonormal with respect to the round radius $1$ metric $g_{{S}^{n-2}}$ on the ${S}^{n-2}$ component, locally defined around a point $p\in {S}^{n-2}$. Note that $[\tilde{e}_i,\tilde{e}_j]=0$, unless $i$ and $j$ are both in $\{3,\cdots,n\}$. Then an orthonormal frame of vector fields for $g$ around the point $(r,\theta,p)$ is given by 
\begin{align*}
e_1=\frac{\tilde{e}_1}{\chi(r)}, \qquad e_2=\frac{\tilde{e}_2}{\psi(r)}, \qquad e_i=\frac{\tilde{e}_i}{\varphi(r)} \ \text{for all} \ i=3,\cdots,n.
\end{align*}
Observe that for $i,j\in \{3,\cdots,n\}$, 
\begin{align*}
[e_1,e_2]=-\frac{\psi'}{\chi\psi}e_2, \qquad [e_1,e_j]=-\frac{\varphi'}{\chi\varphi}e_j, \qquad [e_2,e_j]=0, \qquad [e_i,e_j]=\frac{1}{\varphi^2}[\tilde{e}_i,\tilde{e}_j]. 
\end{align*}
We find it convenient to introduce the vector field $\partial_s=\frac{\partial_r}{\chi}$ so that the previous Lie bracket relations become 
\begin{align*}
[e_1,e_2]=-\frac{\psi_s}{\psi}e_2, \qquad [e_1,e_j]=-\frac{\varphi_s}{\varphi}e_j, \qquad [e_2,e_j]=0, \qquad [e_i,e_j]=\frac{1}{\varphi^2}[\tilde{e}_i,\tilde{e}_j]. 
\end{align*}
We also define $\omega_i$ to be the dual one-form associated to $e_i$. 
\subsection{The Levi-Civita Connection}
Use of the Koszul formua
\begin{align*}
g(\nabla_{e_i}e_j,e_k)=g([e_i,e_j],e_k)+g([e_k,e_i],e_j)+g([e_k,e_j],e_i),
\end{align*} allows for the computation of the $(1,1)$-tensor field $\nabla e_i$ for each $i\in \{1,\cdots,n\}$:
\begin{align}\label{LCC}
\begin{split}
\nabla e_1&=\frac{\psi_s}{\psi}\omega_2\otimes e_2+\sum_{j=3}^{n}\frac{\varphi_s}{\varphi}\omega_j\otimes e_j,\\
\nabla e_2&=-\frac{\psi_s}{\psi}\omega_2\otimes e_1, \ \text{and}\\
\nabla e_i&=-\frac{\varphi_s}{\varphi}\omega_i\otimes e_1+\sum_{k,l=3}^{n}\frac{\tilde{\Gamma}_{kil}}{\varphi}\omega_k\otimes e_l \ \text{for} \ i=3,\cdots,n,
\end{split}
\end{align}
where $\tilde{\Gamma}_{ijk}$ are the Christoffel symbols of $(S^{n-2},g_{S^{n-2}})$ in the basis $\{\tilde{e}_i\}_{i=3}^{n}$. 
Similarly, we can compute the covariant derivatives of $\omega_i$ using the rule $\nabla_{X}(\omega_i)(Y)=X(\omega_i(Y))-\omega_i(\nabla_{X}Y)$:
\begin{align*}
\nabla \omega_1&=\frac{\psi_s}{\psi}\omega_2\otimes \omega_2+\sum_{j=3}^{n}\frac{\varphi_s}{\varphi}\omega_j\otimes \omega_j,\\
\nabla \omega_2&=-\frac{\psi_s}{\psi}\omega_2\otimes \omega_1, \ \text{and}\\
\nabla \omega_i&=-\frac{\varphi_s}{\varphi}\omega_i\otimes \omega_i+\sum_{k,l=3}^{n}\frac{\tilde{\Gamma}_{kil}}{\varphi}\omega_k\otimes \omega_l \ \text{for} \ i=3,\cdots,n,
\end{align*}

\subsection{Curvature}\label{app_curvature}
Using the expressions in \eqref{LCC} we compute that for each point $x$ in a neighbourhood of $(r,\theta,p)\in (0,\frac{\pi}{2})\times {S}^1\times {S}^{n-2}$, the Riemann curvature operator $\Rm:T_x {S}^{n}\wedge T_x S^{n}\to T_x {S}^{n}\wedge T_x {S}^{n}$ is diagonal in the basis $\{e_i\wedge e_j\}_{1\le i<j\le n}$, and is given by 
\begin{align}\label{eqn_seccurv}
\begin{split}
K^\top := \Rm_{1212}&=-\frac{\psi_{ss}}{\psi}\\
 K_1^\perp := \Rm_{1i1i}&=-\frac{\varphi_{ss}}{\varphi},\\
 K_2^\perp := \Rm_{2i2i}&=-\frac{\psi_s\varphi_s}{\psi\varphi},\\
 L := \Rm_{ijij}&=\frac{1-\varphi_s^2}{\varphi^2},
\end{split}
\end{align}
for all $i,j\in \{3,\cdots,n\}$ with $i\neq j$. 
Then the Ricci curvature is diagonal in the basis $\{e_i\}_{i=1}^{n}$ and is given by 
\begin{align}\label{ch1ricc}
\begin{split}
\Rc_{11}&=-\frac{\psi_{ss}}{\psi}-(n-2)\frac{\varphi_{ss}}{\varphi},\\
\Rc_{22}&=-\frac{\psi_{ss}}{\psi}-(n-2)-\frac{\psi_s\varphi_s}{\psi\varphi},\\
\Rc_{ii}&=-\frac{\varphi_{ss}}{\varphi}-\frac{\psi_s\varphi_s}{\psi\varphi}+(n-3)\frac{1-\varphi_s^2}{\varphi^2}, \ \text{for each} \ i=3,\cdots,n.
\end{split}
\end{align}

\subsection{Derivatives of curvature}\label{sec:derivs of curvature}

We produce some useful formulas which relate derivatives of the sectional curvatures in terms of the warping functions $\chi,\psi,\varphi$, as well as the sectional curvatures themselves. First, observe that
\begin{align}\label{1dcurv}
\begin{split}
(K_2^{\perp})_s&=\frac{\varphi_s}{\varphi}(K^{\top}-K_2^{\perp})-\frac{\psi_s}{\psi}(K_2^{\perp}-K_1^{\perp})\\
L_s&=\frac{2\varphi_s}{\varphi}(K_1^{\perp}-L). 
\end{split}
\end{align}
The tensor Laplacian $\Delta \Rm$ can be computed through use of the rule for covariant derivatives of a $(2,2)$-tensor field $T$:
\begin{align*}
&(\nabla_{e_k}T)(e_a,e_b,\omega_c,\omega_d)-e_k(T(e_a,e_b,\omega_c,\omega_d))\\
&=-T(\nabla_{e_k}e_a,e_b,\omega_c,\omega_d)-T(e_a,\nabla_{e_k}e_b,\omega_c,\omega_d)-T(e_a,e_b,\nabla_{e_k}\omega_c,\omega_d)-T(e_a,e_b,\omega_c,\nabla_{e_k}\omega_d).
\end{align*}
We conclude that 
 that $\Delta \Rm=\sum_{k=1}^{n}\nabla_{e_k}\nabla_{e_k}\Rm-\nabla_{\nabla_{e_k}e_k}\Rm$ is diagonal, and that 
\begin{align*}
(\Delta \Rm)(e_i,e_j,\omega_i,\omega_j)-{}&\Delta (\Rm(e_i,e_j,\omega_i,\omega_j))\\
&=2\sum_{k=1}^{n}\Rm(\nabla_{e_k}\nabla_{e_k}e_i,e_j,\omega_i,\omega_j)+\Rm(e_i,\nabla_{e_k}\nabla_{e_k}e_j,\omega_i,\omega_j)\\
&+2\sum_{k=1}^{n}\Rm(\nabla_{e_k}e_i,e_j,\nabla_{e_k}\omega_i,\omega_j)+\Rm(e_i,\nabla_{e_k}e_j,\omega_i,\nabla_{e_k}\omega_j)\\
&+4\sum_{k=1}^{n}\Rm(\nabla_{e_k}e_i,\nabla_{e_k}e_j,\omega_i,\omega_j)+\Rm(\nabla_{e_k}e_i,e_j,\omega_i,\nabla_{e_k}\omega_j,,
\end{align*}
with the scalar Laplacian given by $\Delta f=f_{ss}+\left(\frac{\psi_s}{\psi}+(n-2)\frac{\varphi_s}{\varphi}\right)f_s$. 
Note that the terms of the form $e_k(\Rm(\nabla_{e_k}e_i,e_j,\omega_i,\omega_j))$ all vanish for $k=1$ because the frame is parallel along the $\partial_r$ direction. They also vanish for $k\ge 2$ because the term in brackets is constant in the direction of the principal orbits. 

Completing the computation gives:
\begin{align*}
(\Delta \Rm)_{1212}&=\Delta K^{\top}-2(n-2)\left(\frac{\varphi_s}{\varphi}\right)^2 (K^{\top}-K_2^{\perp})\,,\\
(\Delta \Rm)_{1i1i}&=\Delta K_1^{\perp}+2\left(\frac{\psi_s}{\psi}\right)^2(K_2^{\perp}-K_1^{\perp})-2(n-3)\left(\frac{\varphi_s}{\varphi}\right)^2(K_1^{\perp}-L)\,,\\
(\Delta \Rm)_{2i2i}&=\Delta K_2^{\perp}-2\left(\frac{\psi_s}{\psi}\right)^2(K_2^{\perp}-K_1^{\perp})+2\left(\frac{\varphi_s}{\varphi}\right)^2(K^{\top}-K_2^{\perp})\,,\\
(\Delta \Rm)_{ijij}&=\Delta L+4\left(\frac{\varphi_s}{\varphi}\right)^2(K_1^{\perp}-L)\,.
\end{align*}

\subsection{Square and Lie algebra square of curvature}

Take a principal point $p\in S^n$ and the orthonormal frame $\{e_i\}_{i=1}^{n}$ for $V=T_p S^n$ described in \ref{oframes}. This orthonormal frame allows us to identify $V\wedge V$ with $\mathfrak{so}(n)$ via $e_i\wedge e_j\mapsto e_{ij}-e_{ji}$. The set $\{e_i\wedge e_j\}_{1\le i<j\le n}$ then forms an orthonormal basis for $\mathfrak{so}(n)$ equipped with the inner product $\langle A,B\rangle_{\mathfrak{so}(n)}=-\frac{1}{2}\tr(AB)$. This identification allows us to interpret the Riemann curvature operator $\Rm$ at the point $p$ as a self-adjoint linear operator on $\mathfrak{so}(n)$, and the formula for its Lie algebra square is
\begin{align*}
    \langle \Rm^{\sharp}(h),h\rangle =\frac{1}{2}\sum_{\alpha,\beta=1}^N\langle [\Rm(b_{\alpha}),\Rm(b_{\beta})],h\rangle \cdot \langle [b_{\alpha},b_{\beta}],h\rangle, 
\end{align*}
where $\{b_{\alpha}\}_{\alpha=1}^{N}$ is an orthonormal basis for $\mathfrak{so}(n)$. 
Since $\Rm$ is diagonal in the $\{e_i\wedge e_j\}_{1\le i<j\le n}$ basis, we find immediately that $\Rm^{\sharp}$ is also diagonal, and we compute 
\begin{align*}
(\Rm^2+\Rm^{\sharp})_{1212}&=(K^{\top})^2+(n-2)K_1^{\perp}K_2^{\perp}\,,\\
(\Rm^2+\Rm^{\sharp})_{1212}&=(K^{\perp}_1)^2+K^{\top}K_2^{\perp}+(n-3)K_1^{\perp}L\,,\\
(\Rm^2+\Rm^{\sharp})_{2i2i}&=(K^{\perp}_2)^2+K^{\top}K_1^{\perp}+(n-3)K_2^{\perp}L\,,\\
(\Rm^2+\Rm^{\sharp})_{ijij}&=(K^{\perp}_1)^2+(K_2^{\perp})^2+(n-3)L^2\,,
\end{align*}
for all $i,j\in \{3,\cdots,n\}$ with $i\neq j$.

\section{Cohomogeneity one ancient Ricci flows in low dimensions}\label{otherflows} 

The purpose of this appendix is to give detailed accounts of a number of low-dimensional ancient Ricci flows that are highly relevant to our construction. 

\subsection{Hamilton's cigar}
The cigar solution of the Ricci flow is a time-varying $O(2)$-invariant metric $g_{\mathrm{cigar}}(t)$ on $\mathbb{R}^2$ given in polar coordinates by 
\begin{align*}
g_{\mathrm{cigar}}(t)=\frac{e^{2t}(\cosh^2(r)dr\otimes dr+\sinh^2(r)d\theta\otimes d\theta)}{1+e^{2t}\sinh^2(r)}.
\end{align*}
By introducing the arc-length parameter $s(r,t)=\arcsinh(\sinh(r)e^t)$, we can construct a time-varying $O(2)$-invariant diffeomorphism of $\mathbb{R}^2$ so that the pull-back metric is 
\begin{align*}
g_{\mathrm{cigar}}=ds\otimes ds+\tanh^2(s)d\theta\otimes d\theta;
\end{align*}
since there is no explicit $t$-dependence, we can conlude that $g_{\mathrm{cigar}}$ is a steady Ricci soliton. 
The scalar curvature is $4\sech^2(s)$; this is  $4$ at the tip, and vanishes in the limit as $s\to \infty$. In particular, the cigar soliton is not $\kappa$-noncollapsed on all scales, since the limit of the length of the $S^1$ principal orbits under the $O(2)$ action is $2\pi$. We shall refer to $g_{\mathrm{cigar}}$ as the cigar soliton \emph{of scale $1$}. 
The cigar soliton \emph{of scale $\lambda$} is given in arclength parametrisation by 
\begin{align*}
g_{\mathrm{cigar},\lambda}=ds\otimes ds+\lambda^2\tanh^2(\lambda^{-1}s)d\theta\otimes d\theta\,.
\end{align*}

\subsection{The ancient sausage solution}\label{app_KR}
The ancient sausage solution is an $O(2)$-invariant ancient Ricci flow on $S^2$ given by 
\begin{align*}
g_{\mathrm{sausage}}(t)=\chi(r,t)^2 (dr\otimes dr+\cos^2(r)d\theta\otimes d\theta),
\end{align*}
where $\chi(r,t)=\sqrt{\frac{\tanh(-2t)}{1-\sin^2(r)\tanh^2(-2t)}}$. Here, $r=0$ corresponds to the equator, and $r=\pm \frac{\pi}{2}$ corresponds to the singular orbits of the $O(2)$ action. 

Observe that this Ricci flow has positive sectional curvature (see the computation in the proof of Lemma \ref{initialcurvature}), so near the singularity at $t=0$, the normalised Ricci flow converges smoothly to the round sphere. Also, since by Gauss--Bonnet the area of a Ricci flow on $S^2$ decays  linearly at a speed of $-8\pi$, we have that 
\begin{equation}\label{eqn_area_KR}
	\area(S^2,g_{\mathrm{sausage}}(t)) = -8\pi t.
\end{equation} 
For the backwards asymptotics, we introduce the co-ordinate $R=\frac{\cosh(-2t)}{\tan(r)}$, so that the metric becomes 
\begin{align*}
g_{\mathrm{sausage}}(t)=\frac{\tanh(-2t)}{1+R^2}\left(\frac{\cosh(-2t)^2}{\cosh(-2t)^2+R^2}dR\otimes dR+R^2 d\theta\otimes d\theta\right);
\end{align*}
this converges to the metric $\frac{dR\otimes dR+R^2 d\theta\otimes d\theta}{1+R^2}$, which we recognise as the cigar soliton with $R=\sinh(\rho)$. The supremum length of the $S^1$ fibers is $2\pi$. 

Since the ancient sausage solution has positive curvature, and the length of the $S^1$ orbit at the equator is converging to $2\pi$, the other asymptotics must be converging to the cylinder $S^1\times \mathbb{R}$ of waist $2\pi$.

\subsection{The $O(2)\times O(2)$-invariant hypersausage solution on $S^3$}

In order to put our solutions into some context, we recall that Fateev's ancient ``hypersausage'' solution is an $O(2)\times O(2)$-invariant Ricci flow on $S^3$ which is given, in the principal domain $(0,\frac{\pi}{2})\times S^1\times S^1$, by
\[
g_{\mathrm{hypersausage}}=\chi^2dr^2+\psi^2d\theta^2+\varphi^2d\omega^2\,,
\]
where
\bann
\chi^2(r,t):={}&\frac{\cosh(-2t)\sinh(-2t)}{2(\cos^2r+\sin^2r\cosh(-2t))(\sin^2r+\cos^2r\cosh(-2t))}\\
\psi^2(r,t):={}&\frac{\cos^2r\sinh(-2t)}{2(\sin^2r+\cos^2r\cosh(-2t))}\\
\varphi^2(r,t):={}&\frac{\sin^2r\sinh(-2t)}{2(\cos^2r+\sin^2r\cosh(-2t))}\,.
\eann

Note that, as $t\to -\infty$, both $\psi(0,t)$ and $\varphi(\frac{\pi}{2},0)$ are bounded. Since only one of these quantities is bounded for the ancient Ricci flow on $S^3$ of Theorem \ref{mainexistence}, the two solutions are not isometric. 

\begin{rem}This solution corresponds to ``$k=0$'' in the family of ``twisted hypersausage'' solutions on $S^3$ constructed by Bakas--Kong--Ni \cite{BakasKongNi} (following Fateev \cite{Fateev1,Fateev2}). But the other metrics in the family 
are not $O(2)\times O(2)$-invariant.
\end{rem}

\bibliographystyle{plain}
\bibliography{bibliography}

\end{document}